\newcommand\commentone[1]{\textcolor{black}{#1}}
\newcommand\commenttwo[1]{\textcolor{black}{#1}}
\crefname{hypothesis}{Hypothesis}{Hypotheses}
\def\XXint#1#2#3{{\setbox0=\hbox{$#1{#2#3}{\int}$ }
\vcenter{\hbox{$#2#3$ }}\kern-.6\wd0}}
\title{A monotone discretization for integral fractional Laplacian on
bounded Lipschitz domains: pointwise error estimates under H\"{o}lder
regularity
\thanks{
The work of Shuonan Wu is supported in part by the National Natural
Science Foundation of China grant No. 11901016 and the startup grant
from Peking University. }
}
\author{
Rubing Han\thanks{School of Mathematical Sciences,
  Peking University, Beijing 100871, China} \and 
Shuonan Wu\thanks{School of Mathematical Sciences, Peking University,
Beijing 100871, China.}
}
\begin{document}

\maketitle

\begin{abstract}
We propose a monotone discretization for the integral
fractional Laplace equation on bounded Lipschitz domains with
the homogeneous Dirichlet boundary condition. The method is inspired by a
quadrature-based finite difference method of Huang and Oberman, but
is defined on unstructured grids in arbitrary dimensions with a more
flexible domain for approximating singular integral. The scale of
the singular integral domain not only depends on the local grid size,
but also on the distance to the boundary, since the H\"{o}lder
coefficient of the solution deteriorates as it approaches the
boundary.  By using a discrete barrier function that also reflects the
distance to the boundary, we show optimal pointwise convergence rates
in terms of the H\"{o}lder regularity of the data on both quasi-uniform and graded grids. Several numerical
examples are provided to illustrate the sharpness of the theoretical
results.
\end{abstract}

\begin{keywords}
  Monotone discretization, bounded Lipschitz domains, unstructured
  grids, pointwise error estimate, H\"{o}lder regularity
\end{keywords}

\begin{AMS}
35R11, 65N06, 65N12, 65N15
\end{AMS}

\section{Introduction} \label{sc:intro}
It is known that the fractional Laplacian can be defined in many ways \cite{kwasnicki2017ten, lischke2020fractional}, which are not equivalent on the bounded Lipschitz domain $\Omega \subset \mathbb{R}^n$. In this work, we focus on the integral fractional Laplace equation  of form \cite{DINEZZA2012521}
\begin{equation} \label{eq:fL}
\left\{
  \begin{aligned}
    \mathcal{L} u:= (-\Delta)^s u &= f \quad \text{in }\Omega, \\
    u &= 0 \quad \text{in }\Omega^c := \mathbb{R}^n \setminus \Omega,
  \end{aligned}
\right.
\end{equation}
where $(-\Delta)^s$ is the integral fractional Laplacian of order $s \in (0,1)$, defined by
\begin{equation} \label{eq:L}
  (-\Delta)^s u(x) := C_{n,s}\, \mathrm{P.V.} \int_{\mathbb{R}^n} 
  \frac{u(x) - u(y)}{|x-y|^{n+2s}}\,\mathrm{d}y.
\end{equation}
The normalization constant is given by 
$C_{n,s} := \frac{2^{2s} s \Gamma(s+\frac{n}{2})}{\pi^{n/2}
\Gamma(1-s)}$. In probability, the fractional Laplacian is the
infinitesimal generator of a symmetric $2s$-stable L\'{e}vy process
\cite{applebaum2009levy}. Meanwhile, the fractional Laplacian has been
used in place of the integer-order Laplacian in many applications,
including financial asset prices \cite{tankov2003financial},
fractional conservation law \cite{droniou2010numerical}, geophysical
fluid dynamics \cite{cordoba2004maximum}.

Besides the non-locality and singularity in the kernel, another main
feature of integral fractional Laplacian is that the solutions to
\eqref{eq:fL} exhibit an algebraic boundary singularity regardless of
the domain regularity. As a well-known fact, even if domain is smooth,
the unique solution to \eqref{eq:fL}  has only the optimal
$C^s$-H\"{o}lder regularity in $\bar{\Omega}$ provided that $f\in
L^\infty(\Omega)$, and develops a singularity of the form
$\mathrm{dist}(x,\partial\Omega)^s$ near $\partial \Omega$. These
properties are not limited to \eqref{eq:fL}, but widely appear in a
class of nonlocal elliptic equations, see a survey in
\cite{ros2016nonlocal}. Moreover, when the data has better regularity,
the higher-order interior H\"{o}lder regularity of $u$ requires the use of
weighted H\"{o}lder norms, in which the distance to the boundary is
also involved \cite{ros2014dirichlet}. 

Numerical studies of \eqref{eq:fL} and related fractional-order
problems have experienced some rapid developments in recent years.
Based on the variational formula, a finite element (FE) discretization
using piecewise linear continuous function space was developed and
analyzed in \cite{acosta2017fractional}. Significantly, the error
estimates in energy norm on quasi-uniform and graded grids hinge on
the standard and weighted Sobolev regularity, respectively.  Other
works related to the conforming FE discretizations include error
estimate in $H^1$ norm in the case $s > \frac12$
\cite{borthagaray2019convergence}, local energy estimate
\cite{borthagaray2021local}, multilevel solver
\cite{borthagaray2021robust}, and extensions to non-homogeneous
Dirichlet problem \cite{acosta2019finite} and eigenvalue problem
\cite{borthagaray2018finite}. A nonconforming FE discretization,
based on the Dunford-Taylor representation, was proposed and analyzed
in \cite{bonito2019numerical}. Discretizations of the spectral
fractional Laplacian can be found in
\cite{nochetto2015pde,cusimano2018discretizations}. We refer to
\cite{bonito2018numerical, lischke2020fractional} for the survey of
existing numerical methods for fractional Laplacian.

The maximum principle, even in the case of weak solutions to
\eqref{eq:fL}, is valid \cite{ros2016nonlocal}. In view of numerical
stability, it is desirable that the resulting discrete system also
satisfies a similar maximum principle at discrete level --- known as
the {\it monotonicity}. As in the case of the Laplacian $-\Delta$, the
monotonicity of FE discretizations essentially relies on some grid
conditions \cite{xu1999monotone}, for instance, the Delaunay
triangulation in 2D. Such grid conditions, however, are extremely
difficult to obtain and verify for the FE discretizations of integral
fractional Laplacian mainly due to its nonlocal nature. 

On the other hand, the finite difference (FD) methods have natural
advantages in constructing monotone schemes \commentone{\cite{motzkin1952approximation,kocan1995approximation}}. For instance, a criterion
for easy verification of monotonicity was proposed in
\cite{oberman2006convergent} in the FD setting. In
\cite{huang2014numerical}, Huang and Oberman first proposed a
quadrature-based FD method for solving 1D integral fractional Laplacian. The
monotonicity was proved when using piecewise linear function space,
which yields convergence. In this case, the accuracy of the scheme in
$L^\infty$ norm was shown to be $\mathcal{O}(h^{2-2s})$ under the
assumption that the solution of \eqref{eq:fL} belongs to $C^4$, which
cannot be guaranteed in general. In fact, a simple numerical example
in Table \ref{tb:huang-oberman} shows that the convergence rate in
$L^\infty$ behaves like $\mathcal{O}(h^{\min\{s,2-2s\}})$, where $h$
represents the uniform grid size. The bottleneck of $\mathcal{O}(h^s)$
convergence rate at most also exists in other FD methods for
\eqref{eq:fL} \cite{duo2018novel, duo2019accurate}.

\begin{table}[!htbp]
\centering
	\begin{tabular}{|c||c|c|c|c|c|c|c|c|c|c|}
		\hline
		Value of $s$& 0.10 & 0.20 & 0.30 & 0.40 & 0.50 & 0.60 & 0.67&0.70 & 0.80 & 0.90\\
		\hline
		Order (in $h$) & 0.10 & 0.20 & 0.30 & 0.40 & 0.50 & 0.60 & 0.67& 0.64 & 0.42 &0.21\\
		\hline
	\end{tabular}
	\caption{$\Omega = (-1,1)$, $f =1$. The pointwise convergence order of Huang-Oberman scheme \cite{huang2014numerical} is $\approx  
	\min\{s, 2-2s\}$.}  \label{tb:huang-oberman}
\end{table}

\vspace{-6mm}

The primal goal of this paper is to develop a monotone scheme with
optimal convergence rates for \eqref{eq:fL} on bounded Lipschitz
domains. To this end, we first triangulate the domain $\Omega$ by a
shape regular grid, and construct the associated piecewise linear
continuous function space where the approximate solution lies.
Inspired by \cite{huang2014numerical}, the integral fractional
Laplacian \eqref{eq:L} is divided into the singular part and the tail
part for every interior grid node $x_i$. Under some symmetry
conditions for the singular integral domain $\Omega_i$, the singular
integral can be approximated by a local differential operator to which
the standard monotone FD discretization can be applied. This, together
with a monotone discretization of the tail part and a discrete barrier
function, leads to the discrete comparison principle. 

To achieve the optimal convergence rates, several key ideas are
introduced and utilized in this paper. First, we show the relationship
between high-order H\"{o}lder constant of solution and distance to
boundary, which indicates that the scale of $\Omega_i$, denoted by
$H_i$, should also depend on $\mathrm{dist}(x_i, \partial \Omega)$.
Second, upon the ratio between $\mathrm{dist}(x_i, \partial \Omega)$
and local grid size, we introduce the {\it $\delta$-interior nodes} in
\eqref{eq:interior-node} where the consistency can be proved. This,
together with a discrete barrier function and the discrete comparison
principle, leads to the optimal error estimates. The
analysis only requires the local quasi-uniformity hence can be applied
to both quasi-uniform and graded grids. Moreover, the analysis only
requires the minimal H\"{o}lder regularity of the data, and
interestingly, the H\"{o}lder regularity indeed influences the optimal
scale of $\Omega_i$, see Theorem \ref{tm:err-h} and Theorem
\ref{tm:err-N}.

It is interesting to note that, there is a strong connection between our
work and the monotone two-scale methods for solving Monge-Amp\`ere equations \cite{nochetto2019two}. \commentone{We note that the two-scale methods are also important for problems in which preserving monotonicity and comparison is of relevance \cite{motzkin1952approximation,kocan1995approximation}}. In our work, the scale of $\Omega_i$
can be viewed as the second scale in addition to grid size, which
depends on the distance to boundary due to the PDE theory of
\eqref{eq:fL}.  Moreover, it is known that monotonicity is one of the
essential ingredients in devising convergent schemes for fully
nonlinear PDEs \cite{barles1991convergence}. Our work, in this sense,
is expected to have broad prospects in solving nonlinear problems
involving integral fractional Laplacian.

The rest of the paper is organized as follows. In Section
\ref{sc:regularity}, we review the regularity results for the integral
fractional Laplace equation \eqref{eq:fL}. In Section
\ref{sc:discretization}, we  introduce a monotone discretization and
prove the discrete comparison principle. The consistency error is
divided into singular and tail parts, which are estimated in Section
\ref{sc:Lhs} and Section \ref{sc:Lht} separately. In Section
\ref{sc:error}, combining the discrete comparison principle and the
consistency error, we establish the pointwise error estimates for the
numerical scheme. Finally, in Section \ref{sc:numerical}, several
numerical examples are exhibited to illustrate the theoretical
results.

\section{Preliminary results} \label{sc:regularity}
In this section, we present some preliminary results in the analysis
setting.  For $\beta > 0$, we denote by $|\cdot|_{C^\beta(U)}$ the
$C^\beta(U)$ seminorm. More precisely, we will write $\beta = k +
\beta'$ with $k$ integer and $\beta' \in (0,1]$, then 
$$
  \begin{aligned}
    |w|_{C^\beta(U)} = |w|_{C^{k,\beta'}(U)} &:= \sup_{x,y \in U, x\neq y}
  \frac{|D^kw(x) - D^k w(y)|}{|x - y|^{\beta'}}, \\
    \|w\|_{C^\beta(U)} & := \sum_{\ell = 0}^k \left(\sup_{x\in U}
    |D^\ell w(x)| \right) + |w|_{C^\beta(U)}.
  \end{aligned}
$$

Next, we summarize the H\"{o}lder regularity results for \eqref{eq:fL} given in
\cite{ros2014dirichlet}. To begin with, we state the definition of
Lipschitz domain (cf. \cite[Definition
1.2.1.1]{grisvard2011elliptic}).

\begin{definition}[Lipschitz domain] \label{df:Lipschitz}
Let $\Omega$ be an open subset of $\mathbb{R}^n$. We say that $\Omega$
is a Lipschitz domain if for every $x \in \partial \Omega$ there
exists a neighborhood $V$ of $x$ in $\mathbb{R}^n$ and new
orthogonal coordinates $\{z_1,\cdots,z_n \}$ such that
\begin{itemize}
\item[(a)] $V$ is an hypercube in the new coordinates:
\[
V = \{(z_1,\cdots, z_n) : -a_j < z_j< a_j,~ 1\leq j\leq n  \};
\]
\item[(b)] there exists a Lipschitz function $\varphi$, defined in 
\[
V':= \{(z_1,\cdots, z_{n-1}) : -a_j< z_j < a_j,\; 1\leq j \leq n-1 \}
\]
and such that
\[
\begin{aligned}
  &|\varphi(z')| \leq a_n /2 \text{ for every } z' = (z_1,\cdots,z_{n-1})
  \in V', \\
  &\Omega \cap V = \{z = (z',z_n) \in V :  z_n < \varphi(z') \},\\
  &\partial \Omega\cap V = \{z = (z',z_n) \in V : z_n =
  \varphi(z') \}.
\end{aligned}
\]
\end{itemize}

\end{definition}

\begin{proposition}[Proposition 1.1 in \cite{ros2014dirichlet}]
\label{pp:global-holder} 
Let $\Omega$ be a bounded Lipschitz domain
satisfying exterior ball condition, $f \in L^\infty(\Omega)$, and $u$
be a solution of \eqref{eq:fL}. Then $u \in C^s(\mathbb{R}^n)$ and 
$$ 
  \|u\|_{C^s(\mathbb{R}^n)} \leq C \|f\|_{L^\infty(\Omega)},
$$ 
where $C$ is a constant depending only on $\Omega$ and $s$.
\end{proposition}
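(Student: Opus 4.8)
The plan is to combine three ingredients — a global $L^\infty$ bound, a boundary decay estimate of the form $|u(x)| \le C\|f\|_{L^\infty(\Omega)}\,\mathrm{dist}(x,\partial\Omega)^s$, and \emph{rescaled} interior H\"older estimates — and then patch them together by a case distinction on $|x-y|$ versus $\mathrm{dist}(x,\partial\Omega)$. First I would establish $\|u\|_{L^\infty(\mathbb{R}^n)} \le C\|f\|_{L^\infty(\Omega)}$: enclose $\Omega$ in a large ball $B_M$ and compare $u$ with $\pm\|f\|_{L^\infty(\Omega)}\,\varphi_M$, where $\varphi_M(x)=c_{n,s}(M^2-|x|^2)_+^s$ is the torsion function of $B_M$ (so $(-\Delta)^s\varphi_M=\|f\|_{L^\infty(\Omega)}$ in $B_M$), invoking the maximum principle for \eqref{eq:fL} recalled in the introduction; since $\varphi_M$ is bounded on $\Omega$, this gives the claim.

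Second — and this is the heart of the matter — I would prove the boundary decay estimate using the exterior ball condition. Fix $x_0\in\partial\Omega$ and an exterior ball $B_\rho(y_0)\subset\Omega^c$ with $x_0\in\partial B_\rho(y_0)$; by compactness of $\partial\Omega$ one may take $\rho\ge\rho_0>0$ uniform. The key is to construct an explicit barrier $w\ge 0$ with $(-\Delta)^s w \ge \|f\|_{L^\infty(\Omega)}$ in $D:=\Omega\cap B_{2\rho}(y_0)$, with $w=0$ on $B_\rho(y_0)$, with $w\ge\|u\|_{L^\infty(\mathbb{R}^n)}$ outside $B_{2\rho}(y_0)$, and with $w(x)\le C\|f\|_{L^\infty(\Omega)}\,(|x-y_0|-\rho)^s$ on the annulus $\rho\le|x-y_0|\le 2\rho$. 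Such a barrier can be obtained by transplanting the explicit profile $(1-|z|^2)_+^s$ (for which $(-\Delta)^s[(1-|z|^2)_+^s]=\kappa_{n,s}>0$ in the unit ball) to the exterior of a ball via inversion, or by a direct computation of $(-\Delta)^s$ on a truncated power of $\mathrm{dist}(\cdot,B_\rho(y_0))$; after multiplication by a large constant one gets a bounded $s$-superharmonic function comparable to $\mathrm{dist}(\cdot,B_\rho(y_0))^s$ near $\partial B_\rho(y_0)$. By construction $u-w\le 0$ on $D^c$ (on $B_\rho(y_0)$ both vanish; on $\Omega^c$ one has $u=0\le w$; on $\Omega\setminus B_{2\rho}(y_0)$ one has $w\ge\|u\|_{L^\infty}\ge u$), while $(-\Delta)^s(u-w)\le 0$ in $D$, so the comparison principle yields $u\le w$. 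Taking $x_0$ to be a nearest boundary point of $x$ with $\mathrm{dist}(x,\partial\Omega)<\rho_0/2$, one checks $\rho_0\le|x-y_0|\le\mathrm{dist}(x,\partial\Omega)+\rho_0<2\rho_0$, hence $u(x)\le C\|f\|_{L^\infty(\Omega)}(|x-y_0|-\rho_0)^s\le C\|f\|_{L^\infty(\Omega)}\,\mathrm{dist}(x,\partial\Omega)^s$, the last step from $|x-y_0|-\rho_0\le|x-x_0|=\mathrm{dist}(x,\partial\Omega)$. Applying the same to $-u$ and noting the estimate is trivial when $\mathrm{dist}(x,\partial\Omega)\ge\rho_0/2$ gives the bound for all $x$.

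Third, I would derive a \emph{uniform} interior H\"older seminorm bound. Writing $R=\mathrm{dist}(x,\partial\Omega)$ and $u_R(z)=u(x+Rz)$, the function $u_R$ solves $(-\Delta)^s u_R=R^{2s}f(x+R\cdot)$ in $B_1$, so the standard interior regularity estimate for the fractional Laplacian with bounded right-hand side (which produces $C^{2s}$ or $C^{1,2s-1}$ regularity in $B_{1/2}$, in either case embedding into $C^s$ since $2s\ge s$) yields
$$
|u|_{C^s(B_{R/2}(x))}=R^{-s}|u_R|_{C^s(B_{1/2})}\le C R^{-s}\Big(\|u\|_{L^\infty(B_R(x))}+\int_{\mathbb{R}^n}\frac{|u_R(z)|}{(1+|z|)^{n+2s}}\,\mathrm{d}z+R^{2s}\|f\|_{L^\infty(\Omega)}\Big).
$$
Here the boundary decay estimate is used twice: it gives $\|u\|_{L^\infty(B_R(x))}\le C\|f\|_{L^\infty(\Omega)}(2R)^s$, and, since $\mathrm{dist}(x+Rz,\partial\Omega)\le R(1+|z|)$, it gives $|u_R(z)|\le C\|f\|_{L^\infty(\Omega)}R^s(1+|z|)^s$, so the tail is $\le C\|f\|_{L^\infty(\Omega)}R^s\int(1+|z|)^{-n-s}\,\mathrm{d}z=C'\|f\|_{L^\infty(\Omega)}R^s$. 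With $R^{2s}\le C R^s$ (as $R\le\mathrm{diam}\,\Omega$), everything collapses to $|u|_{C^s(B_{R/2}(x))}\le C\|f\|_{L^\infty(\Omega)}$ with $C$ independent of $x$.

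Finally I would patch. Given $x,y\in\mathbb{R}^n$ with $\mathrm{dist}(x,\partial\Omega)\le\mathrm{dist}(y,\partial\Omega)$: if $|x-y|\ge\tfrac12\mathrm{dist}(x,\partial\Omega)$, then $\mathrm{dist}(x,\partial\Omega)$ and $\mathrm{dist}(y,\partial\Omega)\le\mathrm{dist}(x,\partial\Omega)+|x-y|$ are both $\lesssim|x-y|$, so the boundary decay estimate gives $|u(x)-u(y)|\le|u(x)|+|u(y)|\le C\|f\|_{L^\infty(\Omega)}|x-y|^s$; otherwise $y\in B_{R/2}(x)$ with $R=\mathrm{dist}(x,\partial\Omega)$, and the uniform interior seminorm bound gives $|u(x)-u(y)|\le|u|_{C^s(B_{R/2}(x))}|x-y|^s\le C\|f\|_{L^\infty(\Omega)}|x-y|^s$. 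Together with the $L^\infty$ bound this is exactly $\|u\|_{C^s(\mathbb{R}^n)}\le C\|f\|_{L^\infty(\Omega)}$. The main obstacle is the barrier in Step 2: producing a single explicit function that is simultaneously $s$-superharmonic on $D$, dominates $u$ on the (nonlocally relevant) exterior, and decays precisely like $\mathrm{dist}(\cdot,\partial\Omega)^s$ — the exponent $s$ being sharp and dictated by the model identity $(-\Delta)^s[(x_n)_+^s]=0$ in a half-space; matching the nonlocal tail to the exterior-ball geometry is the delicate point.
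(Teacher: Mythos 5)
The paper offers no proof of this proposition: it is quoted verbatim as Proposition 1.1 of Ros-Oton and Serra \cite{ros2014dirichlet} and used as a black box throughout. Your proposal is, in substance, a faithful reconstruction of the proof given in that reference: the $L^\infty$ bound by comparison with a scaled torsion function of a large ball, the exterior-ball barrier (built from the profile $(1-|z|^2)_+^s$, comparable to $(|x|-1)^s$ in an annulus) yielding $|u(x)|\lesssim\|f\|_{L^\infty(\Omega)}\,\delta(x)^s$, the rescaled interior H\"older estimate with the nonlocal tail controlled by that same decay, and the final patching via the dichotomy between $|x-y|$ and $\delta(x)$ are precisely Lemmas 2.6, 2.7, and 2.9 and the proof of Proposition 1.1 in \cite{ros2014dirichlet}. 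You have also correctly identified the one genuinely delicate step, the construction of the barrier, and described it accurately, so the proposal is correct and follows the same route as the source the paper cites.
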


Moreover, the higher-order H\"{o}lder regularity inside $\Omega$ can
be obtained when $f$ is H\"{o}lder continuous, which hinges on the
weighted norm up to the boundary.  Let $\delta(x)$ be the distance to
the boundary of $x$, and $\delta(x,y) := \min\{\delta(x), \delta(y)\}$.
Following \cite{ros2014dirichlet}, let $\beta > 0$ and $\sigma \geq -\beta$, we define
the seminorm 
\begin{equation} \label{eq:w-holder-semi}
  |w|_{\beta;U}^{(\sigma)} := \sup_{x,y \in U} \left(
  \delta(x,y)^{\beta + \sigma} \frac{|D^kw(x) - D^kw(y)|}{|x -
  y|^{\beta'}} \right)
  \quad \forall w \in C^\beta(U) := C^{k,\beta'}(U).
\end{equation}
For $\sigma > -1$, the associated norm
$\|\cdot\|_{\beta;U}^{(\sigma)}$ is defined as follows: 
\begin{subequations}\label{eq:w-holder}
\begin{enumerate}
\item For $\sigma \geq 0$,
  \begin{equation} \label{eq:w-holder1} 
  \|w\|_{\beta;U}^{(\sigma)} := \sum_{\ell=0}^k \sup_{x\in U} 
  \left( \delta(x)^{\ell+\sigma} |D^\ell w(x)|\right) +
  |w|_{\beta;U}^{(\sigma)};
  \end{equation}
\item For $0 > \sigma > -1$, 
\begin{equation} \label{eq:w-holder2}
  \|w\|_{\beta;U}^{(\sigma)} := \|w\|_{C^{-\sigma}(U)} +
  \sum_{\ell=1}^k  \sup_{x\in U} 
  \left( \delta(x)^{\ell+\sigma} |D^\ell w(x)|\right) +
  |w|_{\beta;U}^{(\sigma)}. 
\end{equation}
\end{enumerate}
\end{subequations}

The following result is the starting point of this work.
\begin{proposition}[Proposition 1.4 in \cite{ros2014dirichlet}]
  \label{pp:delta-holder} 
  Let $\Omega$ be a bounded domain, and $\beta > 0$ be such that neither 
  $\beta$ nor $\beta+2s$ is an integer. Let $f \in C^\beta(\Omega)$ be
  such that $\|f\|_{\beta;\Omega}^{(s)} < \infty$, and $u \in
  C^s(\mathbb{R}^n)$ be a solution of \eqref{eq:fL}. Then, $u \in
  C^{\beta + 2s}(\Omega)$ and 
  \begin{equation} \label{eq:delta-holder} 
  \|u\|_{\beta+2s;\Omega}^{(-s)} \leq C(\|u\|_{C^s(\mathbb{R}^n)} +
  \|f\|_{\beta;\Omega}^{(s)}),
  \end{equation}
where $C$ is a constant depending only on $\Omega$, $s$ and $\beta$.
\end{proposition}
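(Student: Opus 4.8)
\emph{Overview of the strategy.} The plan is to deduce the global weighted estimate \eqref{eq:delta-holder} from an \emph{interior} Schauder estimate for $(-\Delta)^s$ via a scaling argument: the weighted seminorms \eqref{eq:w-holder-semi}--\eqref{eq:w-holder} are built exactly so that this scaling closes. The external input is the classical interior estimate for the fractional Laplacian: there is $C=C(n,s,\beta)$ such that whenever $(-\Delta)^s v=g$ in $B_1$ with $g\in C^\beta(B_1)$, $\beta$ and $\beta+2s$ not integers, and $v\in L^\infty(\mathbb{R}^n)$ (or merely $\int_{\mathbb{R}^n}|v(y)|(1+|y|^{n+2s})^{-1}\,\mathrm{d}y<\infty$), then
\[
  \|v\|_{C^{\beta+2s}(B_{1/2})}\ \le\ C\Big(\|g\|_{C^\beta(B_1)}+\int_{\mathbb{R}^n}\frac{|v(y)|}{1+|y|^{n+2s}}\,\mathrm{d}y\Big).
\]
This can be quoted from the literature, or obtained by freezing the far-field integral of $v$ as a bounded right-hand side and invoking the standard $C^\beta\!\to\!C^{\beta+2s}$ theory; the non-integrality of $\beta$ and $\beta+2s$ is what prevents logarithmic losses.

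\emph{Rescaling at a point.} Fix $x_0\in\Omega$ and set $\rho:=\tfrac12\delta(x_0)$, so $B_\rho(x_0)\subset\Omega$ and $\delta(\cdot)\asymp\rho$ on $B_\rho(x_0)$. Put $v(x):=u(x_0+\rho x)$ and $g(x):=\rho^{2s}f(x_0+\rho x)$, so that $(-\Delta)^s v=g$ in $B_1$. Writing $\beta=k+\beta'$ and $\beta+2s=m+\gamma'$, the $\rho$-homogeneity of the (un)weighted seminorms together with the definition of $\|f\|_{\beta;\Omega}^{(s)}$ (which forces $\sup_{B_\rho(x_0)}|D^\ell f|\lesssim\rho^{-\ell-s}\|f\|_{\beta;\Omega}^{(s)}$ for $\ell\le k$ and $[D^k f]_{C^{\beta'}(B_\rho(x_0))}\lesssim\rho^{-\beta-s}\|f\|_{\beta;\Omega}^{(s)}$, using $\delta\asymp\rho$ there) yield $\|g\|_{C^\beta(B_1)}\lesssim\rho^{s}\|f\|_{\beta;\Omega}^{(s)}$. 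The decisive point is that the far-field of $v$ also carries a factor $\rho^s$: after the substitution $z=x_0+\rho y$,
\[
  \int_{\mathbb{R}^n}\frac{|v(y)|}{1+|y|^{n+2s}}\,\mathrm{d}y=\int_{\mathbb{R}^n}\frac{|u(z)|\,\rho^{2s}}{\rho^{n+2s}+|z-x_0|^{n+2s}}\,\mathrm{d}z\ \lesssim\ \rho^{s}\,\|u\|_{C^s(\mathbb{R}^n)},
\]
where one uses $u\equiv 0$ on $\Omega^c$ together with $u\in C^s(\mathbb{R}^n)$, hence $|u(z)|\le\|u\|_{C^s(\mathbb{R}^n)}\delta(z)^s$ with $\delta(z)\le\delta(x_0)+|z-x_0|$, and splits into $\{|z-x_0|\le\rho\}$ and $\{|z-x_0|>\rho\}$.

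\emph{Unscaling and assembling the norm.} Feeding these two bounds into the interior estimate and undoing the scaling, namely $[D^m v]_{C^{\gamma'}(B_{1/2})}=\rho^{\beta+2s}[D^m u]_{C^{\gamma'}(B_{\rho/2}(x_0))}$ and $\|D^\ell v\|_{L^\infty(B_{1/2})}=\rho^\ell\|D^\ell u\|_{L^\infty(B_{\rho/2}(x_0))}$, and then dividing by $\rho^s\asymp\delta(x_0)^s$, gives for every $x_0\in\Omega$
\[
  \sum_{\ell=0}^{m}\rho^{\,\ell-s}\|D^\ell u\|_{L^\infty(B_{\rho/2}(x_0))}+\rho^{\,\beta+s}[D^m u]_{C^{\gamma'}(B_{\rho/2}(x_0))}\ \lesssim\ \|u\|_{C^s(\mathbb{R}^n)}+\|f\|_{\beta;\Omega}^{(s)}.
\]
Since $\delta(\cdot)\asymp\rho\asymp\delta(x_0)$ on $B_{\rho/2}(x_0)$, letting $x_0$ range over $\Omega$ controls $\sup_{x\in\Omega}\delta(x)^{\ell-s}|D^\ell u(x)|$ for $1\le\ell\le m$ (the weighted sup-terms in \eqref{eq:w-holder2}, as $-s\in(-1,0)$ here) and also the ``nearby'' part of the seminorm $|u|_{\beta+2s;\Omega}^{(-s)}$, i.e.\ pairs with $|x-y|\le\tfrac14\delta(x,y)$: for such pairs take $x_0=x$, note $B_{\delta(x)/4}(x)=B_{\rho/2}(x)$ and $\delta(x,y)=\delta(x)$. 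For the ``far'' pairs $|x-y|>\tfrac14\delta(x,y)$ one bounds $|D^m u(x)-D^m u(y)|$ crudely by $|D^m u(x)|+|D^m u(y)|$ (or, when $m=0$, by $\|u\|_{C^s(\mathbb{R}^n)}|x-y|^s$) and absorbs it into the weighted sup-norms already obtained, using $\delta(x,y)\le\delta(x),\delta(y)$; finally $\|u\|_{C^s(\Omega)}\le\|u\|_{C^s(\mathbb{R}^n)}$. Collecting these contributions yields \eqref{eq:delta-holder}.

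\emph{Main obstacle.} The genuine content lies in two places. First, one needs the interior Schauder estimate in the sharp form above (with the tail/far-field term), which is standard but must be used carefully and is exactly why $\beta$ and $\beta+2s$ are required to be non-integers. Second, and more subtly, the observation that the far-field of $v$ is $O(\rho^s)$ rather than $O(1)$: this is what legitimizes the division by $\rho^s\asymp\delta(x_0)^s$, and it rests essentially on the homogeneous exterior datum $u|_{\Omega^c}=0$ combined with the given global $C^s$ bound. The remaining work---the $\rho$-homogeneity bookkeeping, the ``far/near'' case split, and a Whitney/Besicovitch-type covering of $\Omega$ by the balls $B_{\delta(x)/2}(x)$ to pass from pointwise estimates to global seminorms---is routine.
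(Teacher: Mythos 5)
The paper does not prove this proposition: it is quoted verbatim as Proposition~1.4 of \cite{ros2014dirichlet} and used as a black box, so there is no ``paper's own proof'' with which to compare. Your argument, however, reconstructs in broad strokes the proof strategy that Ros-Oton and Serra actually use in that reference: localize at a point $x_0$, rescale by $\rho\asymp\delta(x_0)$, invoke an interior Schauder estimate for $(-\Delta)^s$ on the unit ball, and exploit the crucial cancellation that both the rescaled right-hand side and the nonlocal tail of the rescaled solution carry a factor $\rho^s$. The second of these is exactly the observation that drives the weighted estimate, and you identify it correctly: it follows from the homogeneous exterior condition together with the global $C^s$ bound via $|u(z)|\le\|u\|_{C^s(\mathbb{R}^n)}\,\delta(z)^s$. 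Your bookkeeping of the $\rho$-homogeneity of $\|g\|_{C^\beta(B_1)}$, the near/far splitting for the weighted H\"older seminorm, and the assembling of $\|u\|_{\beta+2s;\Omega}^{(-s)}$ from the pointwise balls are all sound.

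The one place where the argument is thinner than the original is the precise form of the interior estimate you invoke. You state it with only the weighted $L^1$ tail of $v$ on the right-hand side, but the readily quotable forms (e.g.\ \cite[Corollary~2.4, Lemmas~2.9--2.10]{ros2014dirichlet}) carry an intermediate-order term such as $\|v\|_{L^\infty(B_1)}$ or $\|v\|_{C^\beta(B_1)}$, or even $\|v\|_{C^\beta(\mathbb{R}^n)}$. If only $\|v\|_{L^\infty(B_1)}$ appears, your argument closes unchanged: $\|v\|_{L^\infty(B_1)}=\|u\|_{L^\infty(B_\rho(x_0))}\lesssim\rho^s\|u\|_{C^s(\mathbb{R}^n)}$ by the same $\delta(\cdot)^s$ bound. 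If instead a genuine $\|v\|_{C^\beta(B_1)}$ is required, a bootstrap is needed to first raise the regularity from $C^s$ to intermediate levels before reaching $C^{\beta+2s}$; this is precisely what \cite{ros2014dirichlet} carry out (iterating from small exponents, and treating $\beta+2s\le 1$ and $\beta+2s>1$ separately). Your remark that the far-field integral can be ``frozen'' as a smooth forcing is the right heuristic for bypassing the bootstrap, but it is not itself a one-line consequence; stating the interior input at this level of sharpness, or spelling out the iteration, would close the only real gap.
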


\begin{remark}[blow-up behavior] \label{rk:blow-up-speed}
We further note that the blow-up behavior of the constant $C$ in \eqref{eq:delta-holder} is $|\beta - m|^{-1}$ or $|\beta
+ 2s -m|^{-1}$ as $\beta$ or $\beta +
2s$ approaching to some integer $m$, which can be found in the proof of \cite[Proposition
2.5 \& Proposition 2.7]{silvestre2007regularity}.
\end{remark}

As a corollary, the dependence of the $\delta(x)$ on the H\"{o}lder
norm of $u$ is given as follows. For the sake of expository simplicity, we adopt $\beta$ as the H\"{o}lder index of $u$. We also assume $\Omega$ to  be a bounded Lipschitz domain with exterior ball condition in what follows.

\begin{corollary}[$\delta$-dependence in H\"{o}lder norm] 
\label{co:rho-holder}
Let $\Omega$ be a bounded Lipschitz domain with exterior ball
condition, and $\beta > 2s$ be such that neither $\beta - 2s$ nor $\beta$ is
an integer. Let $f \in L^\infty(\Omega)$ be such that
$\|f\|_{\beta-2s;\Omega}^{(s)} < \infty$. Then, $u \in
C^\beta(\Omega)$ and
\begin{equation} \label{eq:rho-holder}
  \|u\|_{C^\beta(\{x\in \Omega:~\delta(x) \geq \rho\})} \leq C\rho^{s
  - \beta} \quad \forall \rho \in(0, \mathrm{diam}(\Omega)], 
\end{equation}
where $C$ is a constant depending only on $\Omega$, $s$, $\beta$,
$\|f\|_{L^\infty(\Omega)}$ and $\|f\|_{\beta-2s;\Omega}^{(s)}$.
\end{corollary}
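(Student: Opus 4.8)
The plan is to read off the result from Proposition~\ref{pp:delta-holder} by unwinding the weighted H\"{o}lder norm on the subdomain $U_\rho := \{x\in\Omega : \delta(x)\ge\rho\}$. First I would use Proposition~\ref{pp:global-holder}: since $\Omega$ satisfies the exterior ball condition and $f\in L^\infty(\Omega)$, the solution satisfies $u\in C^s(\mathbb{R}^n)$ with $\|u\|_{C^s(\mathbb{R}^n)}\le C\|f\|_{L^\infty(\Omega)}$. This secures the hypothesis $u\in C^s(\mathbb{R}^n)$ needed in the next step.

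Next I would invoke Proposition~\ref{pp:delta-holder} with the H\"{o}lder index there taken to be $\beta-2s$, which is admissible because $\beta>2s$ makes it positive, and the non-integer requirements there --- that neither $\beta-2s$ nor $(\beta-2s)+2s=\beta$ be an integer --- are exactly the hypotheses of the corollary; moreover $\|f\|_{\beta-2s;\Omega}^{(s)}<\infty$ is assumed. The proposition then gives $u\in C^{(\beta-2s)+2s}(\Omega)=C^\beta(\Omega)$ together with
\[
  \|u\|_{\beta;\Omega}^{(-s)}\le C\bigl(\|u\|_{C^s(\mathbb{R}^n)}+\|f\|_{\beta-2s;\Omega}^{(s)}\bigr)\le C\bigl(\|f\|_{L^\infty(\Omega)}+\|f\|_{\beta-2s;\Omega}^{(s)}\bigr)=:C_0,
\]
where $C_0$ has the dependence asserted in the statement (here $-s\in(-1,0)$, so the norm $\|\cdot\|_{\beta;\Omega}^{(-s)}$ is the one defined in \eqref{eq:w-holder2}, and $\sigma=-s\ge-\beta$ since $\beta>s$).

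It then remains to convert this weighted bound into the plain estimate on $U_\rho$. Writing $\beta=k+\beta'$ with $\beta'\in(0,1]$, and noting that $\delta(x),\delta(y),\delta(x,y)\ge\rho$ for $x,y\in U_\rho$, I would bound each term of $\|u\|_{C^\beta(U_\rho)}$ separately: $\sup_{U_\rho}|u|\le\|u\|_{C^s(\Omega)}\le C_0$; for $1\le\ell\le k$, $\sup_{U_\rho}|D^\ell u|\le\rho^{s-\ell}\|u\|_{\beta;\Omega}^{(-s)}\le C_0\rho^{s-\ell}$; and, using \eqref{eq:w-holder-semi} with $\sigma=-s$ (so the weight is $\delta(x,y)^{\beta-s}$), $|u|_{C^\beta(U_\rho)}\le\rho^{s-\beta}|u|_{\beta;\Omega}^{(-s)}\le C_0\rho^{s-\beta}$. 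Finally, since $\rho\le\mathrm{diam}(\Omega)$ and $s-\ell\ge s-\beta$ for $0\le\ell\le k$, one has $\rho^{s-\ell}=\rho^{\beta-\ell}\rho^{s-\beta}\le\max\{1,\mathrm{diam}(\Omega)\}^{\beta}\rho^{s-\beta}$, and similarly $C_0\le C_0\,\mathrm{diam}(\Omega)^{\beta-s}\rho^{s-\beta}$; summing the finitely many contributions yields $\|u\|_{C^\beta(U_\rho)}\le C\rho^{s-\beta}$.

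This argument is essentially a bookkeeping exercise, so I do not anticipate a genuine obstacle; the only points requiring care are matching the $\delta$-power $\beta+\sigma=\beta-s$ in the weighted seminorm \eqref{eq:w-holder-semi} against the ordinary seminorm $|u|_{C^\beta(U_\rho)}$, and absorbing the several powers $\rho^{s-\ell}$ into the single factor $\rho^{s-\beta}$ via $\rho\le\mathrm{diam}(\Omega)$ --- both handled in the last step above.
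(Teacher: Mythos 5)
Your proposal is correct and follows essentially the same route as the paper's proof: invoke Proposition~\ref{pp:global-holder} for the $C^s$ bound, invoke Proposition~\ref{pp:delta-holder} with index $\beta-2s$ to obtain the weighted estimate $\|u\|_{\beta;\Omega}^{(-s)}\le C$, and then unwind the weighted norm on $U_\rho$ term by term using $\delta(x),\delta(x,y)\ge\rho$ and absorbing the lower powers $\rho^{s-\ell}$ into $\rho^{s-\beta}$ via $\rho\le\mathrm{diam}(\Omega)$. The only cosmetic difference is that you spell out the $\ell=0$ term explicitly (via the $\|u\|_{C^s}$ summand in the $\sigma\in(-1,0)$ norm definition~\eqref{eq:w-holder2}), which the paper leaves implicit.
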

\begin{proof}
We denote $U_\rho := \{x\in \Omega:~\delta(x) \geq \rho\}$.  Using
Proposition \ref{pp:delta-holder} and the definition of weighted
seminorm \eqref{eq:w-holder-semi}, we have 
$$ 
  \begin{aligned}
    |u|_{C^\beta(U_\rho)} &= \rho^{s-\beta} \sup_{x,y\in U_\rho}
    \rho^{\beta - s} \frac{|D^k u(x) - D^ku(y)|}{|x-y|^{\beta'}} \\
    & \leq \rho^{s-\beta} \sup_{x,y\in U_\rho} \delta(x,y)^{\beta - s}
    \frac{|D^k u(x) - D^ku(y)|}{|x-y|^{\beta'}} \quad(\text{since }\beta > s)\\
    & \leq \rho^{s-\beta} |u|_{\beta;U_\rho}^{(-s)} \leq
    C(\Omega,s,\beta) \rho^{s-\beta}(\|u\|_{C^s(\mathbb{R}^n)} +
    \|f\|_{\beta-2s;\Omega}^{(s)}) \\ 
    & \leq C(\Omega,s,\beta,\|f\|_{L^\infty(\Omega)},
    \|f\|_{\beta-2s;\Omega}^{(s)}) \rho^{s - \beta}. 
  \end{aligned}
$$ 
Here, we use Proposition \ref{pp:global-holder} in the last step.
Similarly, for any $1 \leq \ell \leq k<\beta$, 
$$ 
  \sup_{x \in U_\rho}|D^\ell u(x)| \leq \rho^{s -
  \ell} \|u\|_{\beta;U_\rho}^{(-s)} \leq
  C(\Omega,s,\beta,\|f\|_{L^\infty(\Omega)},
  \|f\|_{\beta-2s;\Omega}^{(s)})\rho^{s - \beta}, 
$$ 
where we use $\rho^{\beta - \ell} \leq \mathrm{diam}(\Omega)^{\beta
- \ell}$ in the last step. This completes the proof.
\end{proof}

At this point, we recall a useful estimate corresponding to the
integrability of kernel outside the domain (cf. \cite[Eq.
(1,3,2,12)]{grisvard2011elliptic}): There exist two constants $0 <
C_1 \leq C_2$ such that 
\begin{equation} \label{eq:kernel-int}
C_1(\Omega,s) \delta(x)^{-2s} \leq \int_{\Omega^c} \frac{1}{|x -
y|^{n+2s}} \,\mathrm{d}y \leq C_2(\Omega,s)\delta(x)^{-2s}.
\end{equation}

In the last of this section, we present an elementary result of the center
difference in H\"{o}lder seminorm along the direction $\theta \in
S^{n-1}$, where $S^{n-1}$ denotes the unit $(n-1)$-sphere.
\begin{lemma}[FD in H\"{o}lder seminorm] \label{lm:fd-holder}
Let $\rho > 0$ and $u \in C^{\beta}(B_\rho(x))$.
\begin{enumerate}
\item If $\beta \leq 2$, then there exists $C>0$ for any $\theta \in
  S^{n-1}$,
$$ 
|2u(x) - u(x+\rho \theta) - u(x-\rho\theta)| \leq C\rho^\beta
|u|_{C^\beta(B_\rho(x))}.
$$ 
\item If $2< \beta \leq 4$, then there exists $C>0$ for any
$\theta \in S^{n-1}$,
$$ 
\left|2u(x) - u(x+\rho \theta) - u(x-\rho\theta) + \rho^2
\frac{\partial^2 u}{\partial \theta^2}(x)\right| \leq C \rho^\beta
|u|_{C^\beta(B_\rho(x))}.
$$ 
\end{enumerate} 
\end{lemma}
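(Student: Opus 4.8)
My plan is to reduce the $n$-dimensional estimate to a one-dimensional one by restricting $u$ to the line through $x$ in the direction $\theta$, and then to invoke Taylor's formula with integral remainder, truncated at precisely the order the available H\"{o}lder regularity permits. Write $\beta = k + \beta'$ with $k \in \mathbb{Z}_{\ge 0}$ and $\beta' \in (0,1]$, and set $g(t) := u(x + t\theta)$ for $t \in [-\rho,\rho]$. By the chain rule, $g^{(j)}(t)$ is a fixed linear combination, with multinomial coefficients and monomials in the components of $\theta$, of the partial derivatives $\{D^\alpha u(x + t\theta) : |\alpha| = j\}$; since $|\theta| = 1$, this gives $g \in C^\beta([-\rho,\rho])$ together with a bound $|g|_{C^\beta([-\rho,\rho])} \le C(n,\beta)\,|u|_{C^\beta(B_\rho(x))}$, and also $g''(0) = \frac{\partial^2 u}{\partial \theta^2}(x)$ whenever $k \ge 2$. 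Both parts of the lemma then amount to estimating the second central difference $D_\rho^2 g(0) := g(\rho) - 2g(0) + g(-\rho)$: part (i) is $|D_\rho^2 g(0)| \le C\rho^\beta |g|_{C^\beta}$, and part (ii) is $|D_\rho^2 g(0) - \rho^2 g''(0)| \le C\rho^\beta |g|_{C^\beta}$; I would then substitute the bound on $|g|_{C^\beta}$.

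For part (i), where $k \in \{0,1\}$, I would use that $D_\rho^2$ annihilates affine functions, so it can be controlled by $g^{(k)}$ and its $\beta'$-H\"{o}lder continuity alone. If $k = 0$, write $D_\rho^2 g(0) = (g(\rho) - g(0)) - (g(0) - g(-\rho))$ and bound each increment by $|g|_{C^{0,\beta'}}\rho^{\beta'}$. If $k = 1$, use $g(\rho) - g(0) = \int_0^\rho g'(t)\,dt$ and $g(0) - g(-\rho) = \int_0^\rho g'(t - \rho)\,dt$ to get $D_\rho^2 g(0) = \int_0^\rho (g'(t) - g'(t - \rho))\,dt$, whose integrand is bounded by $|g|_{C^{1,\beta'}}\rho^{\beta'}$. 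Either way $|D_\rho^2 g(0)| \le C|g|_{C^\beta}\rho^\beta$, which yields the first assertion.

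For part (ii), where $k \in \{2,3\}$ and hence $u \in C^2$ so $g''(0)$ makes sense, I would apply Taylor's formula with integral remainder to second order along $\pm\theta$,
\[
  g(\pm\rho) = g(0) \pm \rho g'(0) + \tfrac12 \rho^2 g''(0) + \int_0^\rho (\rho - t)\big(g''(\pm t) - g''(0)\big)\,dt,
\]
which yields $D_\rho^2 g(0) - \rho^2 g''(0) = \int_0^\rho (\rho - t)\big(g''(t) + g''(-t) - 2g''(0)\big)\,dt$. The remaining task is the symmetric second-difference bound $|g''(t) + g''(-t) - 2g''(0)| \le C|g|_{C^\beta}\,t^{\beta - 2}$ for $t \in [0,\rho]$: when $k = 2$ this is immediate from the $\beta'$-H\"{o}lder continuity of $g''$ (here $\beta - 2 = \beta'$), and when $k = 3$ one writes $g''(t) + g''(-t) - 2g''(0) = \int_0^t (g'''(\tau) - g'''(-\tau))\,d\tau$ and uses the $\beta'$-H\"{o}lder continuity of $g'''$, giving the exponent $1 + \beta' = \beta - 2$. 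Inserting this into the remainder integral and using $\int_0^\rho (\rho - t)\,t^{\beta - 2}\,dt = C(\beta)\rho^\beta$ (finite since $\beta > 2$) finishes part (ii).

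I do not anticipate a serious obstacle here; the argument is routine. The one point requiring care is matching the order of the Taylor expansion to the integer part $k$ of $\beta$ — in particular, one must avoid differentiating $g$ three times when only $k = 2$ is available, which is exactly why part (ii) splits into the cases $k = 2$ and $k = 3$. The only other bookkeeping is the harmless dimensional constant $C(n,\beta)$ picked up when passing from $u$ to its one-dimensional restriction $g$, which does not affect the stated dependence of the final constant.
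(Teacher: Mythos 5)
Your proof is correct and takes essentially the same approach as the paper's: restrict $u$ to the line $t \mapsto u(x + t\theta)$, Taylor-expand to order $k$ where $\beta = k + \beta'$, and use the $\beta'$-H\"older continuity of the $k$-th derivative to bound the remainder. The only cosmetic difference is that the paper invokes the Lagrange form of the remainder in a single unified computation, whereas you use the integral remainder and split into the subcases $k \in \{0,1\}$ and $k \in \{2,3\}$; both are valid and yield the same bound.
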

\begin{proof}
Recall that $\beta = k + \beta'$ for integer $k \leq 3$ and $\beta'
\in (0,1]$, we have by Taylor's expansion with Lagrange remainder term that,
for some $\xi \in B_\rho(x)$,
$$
\begin{aligned}
u(x + \rho \theta) & = u(x) + \sum_{j=1}^{k-1}
  \frac{1}{j!}\frac{\partial^j u}{\partial \theta^j}(x) \rho^j +
  \frac{1}{k!}\frac{\partial^k u}{\partial \theta^k}(\xi) \rho^k \\
&= u(x) + \sum_{j=1}^{k} \frac{1}{j!}\frac{\partial^j u}{\partial
  \theta^j}(x) \rho^j +  \frac{1}{k!}\left(\frac{\partial^k
  u}{\partial \theta^k}(\xi) - \frac{\partial^k u}{\partial
  \theta^k}(x)\right) \rho^k. 
\end{aligned}
$$ 
The last term can be controlled by using the definition of H\"{o}lder
  seminorm, which completes the proof.
\end{proof}

\section{Monotone discretization} \label{sc:discretization}
\commentone{For ease of discretization and the sake of simplicity of the exposition, we shall henceforth consider $\Omega$ to be a bounded open polytope.} We assume $\Omega$ admits an admissible triangulation $\mathcal{T}_h$,
i.e., \commentone{$\cup_{T\in \mathcal{T}_h}\bar{T} =\overline{\Omega}$}.  Let $\mathcal{N}_h$ denote
the node set of $\mathcal{T}_h$, $\mathcal{N}_h^b := \{x_i \in
\mathcal{N}_h: x_i \in \partial \Omega\}$ be the collection of boundary nodes, and
$\mathcal{N}_h^0 := \mathcal{N}_h \setminus \mathcal{N}_h^b$. We
require that the family of triangulation under consideration satisfies
shape regularity and local quasi-uniformity. \commentone{Note that the latter can be deduced by the former when $n \geq 2$ \cite{brenner2008mathematical}}.  More precisely, let $h_T$ and
$\rho_T$ respectively be the diameter of $T$ and diameter of the
largest ball contained in $T$, the conditions of triangulation read
\begin{subequations} \label{eq:triangulation}
\begin{align}
\exists \lambda_1 > 0 ~&~\mbox{s.t. } h_T \leq \lambda_1 \rho_T ~\forall T
  \in \mathcal{T}_h, \label{eq:shape-reg1} \\
\exists \lambda_2 > 0 ~&~ \mbox{s.t. } h_T \leq \lambda_2 h_{T'} ~\forall
  T, T'\in\mathcal{T}_h \mbox{ with } \bar{T} \cap \bar{T}' \neq
  \varnothing. \label{eq:shape-reg2} 
\end{align}
\end{subequations}
For each $x_i \in \mathcal{N}_h^0$, we define $\omega_i =
\cup_{\bar{T}\ni x_i} \bar{T}$, and denote by $h_i$ the radius of
inscribed sphere centered at $x_i$ for $\omega_i$. Clearly, for any
$\bar{T} \ni x_i$, 
\begin{equation} \label{eq:shape-reg3} 
\underbrace{2\lambda_1\lambda_2}_{:=\lambda} h_i \geq
  \lambda_1\lambda_2 \rho_T \geq \lambda_2 h_T \geq
\max_{\bar{T'}\ni x_i}h_{T'}.
\end{equation} 

Let $\mathbb{V}_h$ be the space of continuous piecewise linear
functions over $\mathcal{T}_h$ that vanish in $\Omega^c$, that is, 
\begin{equation} \label{eq:FEM-space}
\mathbb{V}_h := \{ v:\; v|_T \in \mathcal{P}_1(T) ~\forall T \in
\mathcal{T}_h,\; v|_{\Omega^c} = 0\}.
\end{equation} 

Following \cite{huang2014numerical}, we divide the integral of fractional Laplacian into two
parts: The singular part and the tail part.

\subsection{Discretization of singular integral}
For each $x_i \in \mathcal{N}_h^0$, we take a proper scale $H_i$ such that $B_{H_i}(x_i) \subset \Omega$. The standard centered
difference of the $\Delta u(x_i)$ with spacing $H_i$ is denoted by 
$$
\frac{\Delta_{\rm FD} u(x_i;H_i)}{H_i^2} := \sum_{j=1}^n \frac{u(x_i +
H_i e_j) - 2u(x_i) + u(x_i - H_i e_j)}{H_i^2},
$$
where $e_j$ is the unit vector of the $j$-th coordinate.

For the discretization of singular integral, we consider a star-shaped
domain $\Omega_i$ centered at $x_i$, i.e., by using the
multi-dimensional polar coordinate 
\begin{equation} \label{eq:singular-domain}
\Omega_i := \{x_i + \rho \theta: \theta \in S^{n-1}, \rho \in [0,
  \rho_i(\theta)), \rho_i(\theta) > 0 \}.
\end{equation}
The domain $\Omega_i$ is assumed to satisfy the following conditions:
\begin{enumerate}
\item Interior of $\Omega$: $\Omega_i \subset \Omega$, $\forall x_i \in
  \mathcal{N}_h^0$.
\item Symmetry: 
\begin{equation} \label{eq:theta-symmetry}
\begin{aligned} 
& x_i + (z_1, \cdots, z_n) \in \Omega_i \\
\Longleftrightarrow ~& x_i + (\pm z_{\sigma(1)}, \cdots, \pm
  z_{\sigma(n)}) \in \Omega_i \mbox{ for any permutation } \sigma.
\end{aligned}
\end{equation}
\item Quasi-uniformity: there exist positive constants
  $\underline{c}_S, \bar{c}_S$, such that 
\begin{equation} \label{eq:theta-uniformity}
\underline{c}_S H_i \leq \rho_i(\theta) \leq \bar{c}_S H_i \quad
  \forall  x_i \in \mathcal{N}_h^0, \theta \in S^{n-1}.
\end{equation}
\end{enumerate}

Using the symmetry of $\Omega_i$ and polar coordinate transformation
$y \mapsto x_i + z = x_i + \rho \theta$ which satisfies $\mathrm{d}z =
\rho^{n-1} \mathrm{d} \rho \mathrm{d} S_\theta$, we have
\begin{equation} \label{eq:reflective-sym}  
\begin{aligned}
\int_{\Omega_i} \frac{v(x_i) - v(y)}{|x_i - y|^{n+2s}} \,\mathrm{d}y 
&=  \frac{1}{2}\int_{\Omega_i - x_i} \frac{2v(x_i) - v(x_i+z) -
  v(x_i-z)}{|z|^{n+2s}} \,\mathrm{d}z  \\
& =  \frac12  \commentone{\int_{S^{n-1}}\int_0^{\rho_i(\theta)}}
  \frac{2v(x_i) - v(x_i + \rho \theta) - v(x_i -
  \rho\theta)}{\rho^{1+2s}} \, \commentone{\mathrm{d}\rho}\mathrm{d}S_\theta.
\end{aligned}
\end{equation}

Suppose the function $v \in C^2(\Omega_i)$, the above integral can be
approximated by the following second-order differential operator  
$$ 
\begin{aligned}
& \frac12 \commentone{\int_{S^{n-1}}\int_0^{\rho_i(\theta)}} \, 
  \frac{2v(x_i) - v(x_i + \rho \theta) - v(x_i -
  \rho\theta)}{\rho^{1+2s}} \,\commentone{\mathrm{d}\rho}\mathrm{d}S_\theta\\
\approx ~& -\frac12 D^2 u(x_i) :   \commentone{\int_{S^{n-1}} \int_0^{\rho_i(\theta)} \rho^{1-2s}\mathrm{d}\rho\;
    \theta \otimes \theta \,\mathrm{d}
  S_\theta} \\
= ~& -\frac{H_i^{2-2s}}{4(1-s)} D^2 u(x_i) : \int_{S^{n-1}} \left(
  \frac{\rho_i(\theta)}{H_i} \right)^{2-2s} \theta \otimes \theta
  \,\mathrm{d} S_\theta.
\end{aligned}
$$ 
Thanks to the symmetry of $\Omega_i$, the last integral can be
simplified by the lemma below. 
\begin{lemma}[symmetric integral] Under the conditions
  \eqref{eq:theta-symmetry} -- \eqref{eq:theta-uniformity}, 
\begin{equation} \label{eq:integral-symmetry}
\frac{1}{4(1-s)} \int_{S^{n-1}} \left( \frac{\rho_i(\theta)}{H_i}
  \right)^{2-2s} \theta \otimes \theta \,\mathrm{d}S_\theta 
= \underbrace{\frac{1}{4n(1-s)} \int_{S^{n-1}}  \left(
  \frac{\rho_i(\theta)}{H_i} \right)^{2-2s} \,\mathrm{d}S_\theta}_{:=
  \kappa_{n,s,i}}  I_n,
\end{equation}
where $I_n$ is the identity matrix. Moreover, $\kappa_{n,s,i}$ is a
uniformly bounded constant.
\end{lemma}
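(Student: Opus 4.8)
The plan is to exploit the invariance of the weight $g(\theta) := (\rho_i(\theta)/H_i)^{2-2s}$ under the finite group generated by coordinate permutations and sign flips --- which is exactly what \eqref{eq:theta-symmetry} encodes --- together with the fact that all these transformations are isometries of $S^{n-1}$ and hence preserve the surface measure $\mathrm{d}S_\theta$. Writing $M := \int_{S^{n-1}} g(\theta)\,\theta\otimes\theta\,\mathrm{d}S_\theta$, the left-hand side of \eqref{eq:integral-symmetry} is $\frac{1}{4(1-s)}M$, so it suffices to show $M = \big(\tfrac1n\int_{S^{n-1}} g(\theta)\,\mathrm{d}S_\theta\big)\,I_n$; note the integral is finite since \eqref{eq:theta-uniformity} gives the two-sided bound $\underline{c}_S \le \rho_i(\theta)/H_i \le \bar{c}_S$.

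First I would show the off-diagonal entries vanish. Fix $j \neq l$ and let $R_j : S^{n-1} \to S^{n-1}$ be the reflection flipping the $j$-th coordinate; it is an isometry, so the change of variables $\theta \mapsto R_j\theta$ leaves $\mathrm{d}S_\theta$ invariant, and by \eqref{eq:theta-symmetry} we have $g(R_j\theta) = g(\theta)$. Applying this change of variables to $M_{jl} = \int_{S^{n-1}} g(\theta)\,\theta_j\theta_l\,\mathrm{d}S_\theta$ and using $(R_j\theta)_j (R_j\theta)_l = -\theta_j\theta_l$ yields $M_{jl} = -M_{jl}$, hence $M_{jl} = 0$. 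Next, for the diagonal entries, fix $j \neq l$ and let $P_{jl}$ be the isometry swapping coordinates $j$ and $l$; again $g \circ P_{jl} = g$ by \eqref{eq:theta-symmetry}, and the change of variables $\theta \mapsto P_{jl}\theta$ sends $\theta_j^2$ to $\theta_l^2$, so $M_{jj} = M_{ll}$. Therefore $M = cI_n$ with $c = \tfrac1n \tr M = \tfrac1n \int_{S^{n-1}} g(\theta)\,|\theta|^2\,\mathrm{d}S_\theta = \tfrac1n \int_{S^{n-1}} g(\theta)\,\mathrm{d}S_\theta$, since $|\theta| = 1$ on $S^{n-1}$; multiplying by $\frac{1}{4(1-s)}$ gives \eqref{eq:integral-symmetry} with $\kappa_{n,s,i}$ as defined.

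Finally, the uniform boundedness of $\kappa_{n,s,i}$ follows directly from \eqref{eq:theta-uniformity}: since $2-2s \in (0,2)$, the bound $\underline{c}_S \le \rho_i(\theta)/H_i \le \bar{c}_S$ implies $\min\{1,\underline{c}_S^{2}\} \le (\rho_i(\theta)/H_i)^{2-2s} \le \max\{1,\bar{c}_S^{2}\}$, so that $\kappa_{n,s,i}$ is bounded above and below by positive constants depending only on $n$, $s$, $\underline{c}_S$ and $\bar{c}_S$ (via $|S^{n-1}| = 2\pi^{n/2}/\Gamma(n/2)$), uniformly in $i$ and in $h$. None of these steps presents a genuine obstacle; the only points deserving a line of care are recording that the generators of the relevant symmetry group are isometries of the sphere --- so that $\mathrm{d}S_\theta$ is preserved under the above substitutions --- and observing that, read with finite sums in place of integrals, the identical argument also covers the degenerate case $n = 1$, where $S^0 = \{\pm 1\}$.
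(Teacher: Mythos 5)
Your proof is correct and exploits exactly the same symmetry group (signed coordinate permutations) as the paper; the paper symmetrizes the integrand $\theta\otimes\theta$ pointwise over the group orbit to replace it by $\tfrac1n I_n$, while you apply the individual group elements as changes of variables to kill off-diagonal entries and equate diagonal ones, then compute the constant from the trace. The two formulations are mathematically equivalent, and your treatment of the uniform bound via \eqref{eq:theta-uniformity} matches the paper's.
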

\begin{proof}
It is readily seen that $\rho_i(\pm\theta_{\sigma(1)}, \cdots,
  \pm\theta_{\sigma(n)})$ are the same.  For any fixed permutation
  $\sigma$, let $\theta_{j_1,\cdots, j_n}^\sigma := ((-1)^{j_1}
  \theta_{\sigma(1)}, \cdots, (-1)^{j_n} \theta_{\sigma(n)})$, then 
$$ 
\sum_{j_1,\cdots, j_n = 0}^{1} \theta_{j_1,\cdots,j_n}^\sigma \otimes
  \theta_{j_1,\cdots, j_n}^\sigma = 2^n
  \mathrm{diag}(\theta^{\commentone{2}}_{\sigma(1)}, \cdots, \theta^{\commentone{2}}_{\sigma(n)}),
$$ 
which together with $\theta_{j_1,\cdots, j_n}^\sigma \in S^{n-1}$
  yields 
$$ 
\sum_{\sigma} \sum_{j_1,\cdots, j_n = 0}^{1}
  \theta_{j_1,\cdots,j_n}^\sigma \otimes \theta_{j_1,\cdots,
  j_n}^\sigma = \frac{2^n n!}{n} I_n.
$$ 
Hence, the contribution of $\theta \otimes \theta$ in the integral is
identically to $\frac{1}{n} I_n$, which leads to
\eqref{eq:integral-symmetry}.  The quasi-uniformity
  \eqref{eq:theta-uniformity} implies that $\kappa_{n,s,i} =
  \mathcal{O}(1)$.
\end{proof}

Combining \eqref{eq:reflective-sym} -- \eqref{eq:integral-symmetry},
the discretization of the singular integral is defined by 
\begin{equation} \label{eq:Ls}
\begin{aligned}
\mathcal{L}_{h}^S[u](x_i) & := -\kappa_{n,s,i} \frac{\Delta_{\rm FD}
  u(x_i; H_i)}{H_i^{2s}} \\
&= -\kappa_{n,s,i}\sum_{j=1}^n \frac{u(x_i + H_i e_j) - 2u(x_i) +
  u(x_i - H_i e_j)}{H_i^{2s}}.
\end{aligned}
\end{equation}

\begin{remark}[dependence of $H_i$] So far the discretization of
  singular integral depends \commentone{solely} on $\Omega_i$ with scale $H_i$. We
  will show below that the $H_i$ should depend on local grid size and the distance
  of $x_i$ to the boundary. Therefore, for the sake of consistency, we
  adopt $\mathcal{L}_h^S$ to indicate the essential dependence
  on $h$.
\end{remark}

\begin{remark}[examples of $\Omega_i$] \label{rk:omega-i}
The simplest example that
  satisfies the condition \eqref{eq:theta-symmetry} --
  \eqref{eq:theta-uniformity} is $\Omega_i = B_{H_i}(x_i)$, which
  gives $\kappa_{n,s,i} = \frac{|S^{n-1}|}{4n(1-s)} =
  \frac{\omega_n}{4(1-s)}$, where $\omega_n =
  \frac{2\pi^{n/2}}{n\Gamma(n/2)}$ is the volume of unit ball in
  $\mathbb{R}^n$. Another example is the $n$-dimensional cube
  centered at $x_i$ with scale $H_i/\sqrt{n}$, namely
$$
\Omega_i = \{x_i + (z_1, \cdots, z_n)~:~ |z_i| < \frac{H_i}{\sqrt{n}}\}.
$$ 
Clearly, $\Omega_i \subset B_{H_i}(x_i) \subset \Omega$. A direct
calculation shows that in 2D,
\begin{equation} \label{eq:2d-kappa}
\begin{aligned}
\kappa_{n,s,i} &= \frac{1}{1-s} \int_0^{\pi/4}
  \left(\frac{1}{\sqrt{2}\cos\theta}\right)^{2-2s} \,\mathrm{d}\theta
  \\
& = 
\left\{
\begin{array}{ll}
\frac{1}{(1-s)2^{2-s}} \left[ \frac{\sqrt{\pi}\Gamma(s-\frac12)}{\Gamma(s)} - B_{\frac12}(s-\frac12,\frac12) \right] & \quad s\neq 0.5, \\
\frac{1}{(1-s)2^{-s}} \tanh^{-1}\left( \tan(\frac{\pi}{8})\right)  & \quad s = 0.5.
\end{array}
\right.
\end{aligned}
\end{equation}
Here, $B_x(a,b)$ is the incomplete beta function.
\end{remark}

\subsection{Monotonicity}
We seek $u_h \in \mathbb{V}_h$ such that for $x_i \in \mathcal{N}_h^0$,
\begin{equation} \label{eq:Lh}
\mathcal{L}_h[u_h](x_i) := \underbrace{-\kappa_{n,s,i} \frac{\Delta_{FD} u_h(x_i;H_i)}{H_i^{2s}}}_{\mathcal{L}_h^S[u_h](x_i)} 
+ \underbrace{\int_{\Omega_i^c} \frac{u_h(x_i) - u_h(y)}{|x_i - y|^{n+2s}} \,\mathrm{d}y}_{:=\mathcal{L}_h^T[u_h](x_i)} 
= f(x_i).
\end{equation}
A similar definition was first proposed by Huang and Oberman
\cite{huang2014numerical} on 1D uniform grids with $H_i = h$. Any
function in $\mathbb{V}_h$ has a pointwise definition and hence
$\mathcal{L}_h$ is well defined. The tail of integral
$\mathcal{L}_h^T[u_h]$ renders integral of piecewise linear functions
outside $\Omega_i$.

We now show that \eqref{eq:Lh} is monotone and prove the discrete
comparison principle.

\begin{lemma}[monotonicity]\label{lm:monotone}
  Let $v_h, w_h \in \mathbb{V}_h$. If $v_h - w_h$ attains a non-negative maximum at
  an interior node $x_i\in \mathcal{N}_h^0$, then $\mathcal{L}_h[v_h](x_i) \geq \mathcal{L}_h[w_h](x_i)$.

\end{lemma}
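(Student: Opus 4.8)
The plan is to use the linearity of $\mathcal{L}_h$ to reduce the claim to showing $\mathcal{L}_h[e_h](x_i) \geq 0$, where $e_h := v_h - w_h \in \mathbb{V}_h$; indeed $\mathcal{L}_h[v_h](x_i) - \mathcal{L}_h[w_h](x_i) = \mathcal{L}_h[e_h](x_i)$. By hypothesis $e_h$ attains a non-negative maximum at the interior node $x_i$, so $e_h(x_i) \geq 0$ and $e_h(x_i) \geq e_h(x)$ for all $x \in \overline{\Omega}$; since $e_h$ vanishes on $\Omega^c$ (both $v_h$ and $w_h$ do) and $e_h(x_i) \geq 0$, this actually gives $e_h(x_i) = \max_{\mathbb{R}^n} e_h$. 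I would then split $\mathcal{L}_h[e_h](x_i) = \mathcal{L}_h^S[e_h](x_i) + \mathcal{L}_h^T[e_h](x_i)$ and argue that each term is non-negative.

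For the singular part, recall $\mathcal{L}_h^S[e_h](x_i) = -\kappa_{n,s,i} H_i^{-2s} \sum_{j=1}^n \left( e_h(x_i + H_i e_j) - 2 e_h(x_i) + e_h(x_i - H_i e_j) \right)$. Since $B_{H_i}(x_i) \subset \Omega$, the stencil points $x_i \pm H_i e_j$ lie in $\overline{\Omega}$, so each parenthesized term is $\leq 0$ by maximality of $e_h$ at $x_i$; as $\kappa_{n,s,i} > 0$ (immediate from its definition in \eqref{eq:integral-symmetry}) and $H_i > 0$, this yields $\mathcal{L}_h^S[e_h](x_i) \geq 0$.

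For the tail part, I would write $\mathcal{L}_h^T[e_h](x_i) = \int_{\Omega_i^c} \frac{e_h(x_i) - e_h(y)}{|x_i - y|^{n+2s}}\,\mathrm{d}y$ and decompose $\Omega_i^c = (\Omega \setminus \Omega_i) \cup \Omega^c$. On $\Omega \setminus \Omega_i$ the integrand is non-negative because $e_h(y) \leq e_h(x_i)$; on $\Omega^c$ we have $e_h(y) = 0$, so the integrand equals $e_h(x_i)\,|x_i - y|^{-(n+2s)} \geq 0$ thanks to the non-negativity of the maximum. Integrability is not an issue: by quasi-uniformity \eqref{eq:theta-uniformity} we have $B_{\underline{c}_S H_i}(x_i) \subset \Omega_i$, so the kernel is bounded on $\Omega_i^c$, and \eqref{eq:kernel-int} controls the portion over $\Omega^c$, on which $e_h$ is bounded. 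Hence $\mathcal{L}_h^T[e_h](x_i) \geq 0$, and adding the two pieces gives $\mathcal{L}_h[v_h](x_i) - \mathcal{L}_h[w_h](x_i) = \mathcal{L}_h[e_h](x_i) \geq 0$.

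There is no genuine obstacle in this argument; the one subtlety worth emphasizing is that the non-negativity of the maximum enters precisely in the $\Omega^c$ contribution to the tail --- without it that term could be negative --- whereas the singular part and the $\Omega \setminus \Omega_i$ portion of the tail only require that $x_i$ be a maximizer of $e_h$.
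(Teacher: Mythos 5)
Your proposal is correct and follows essentially the same route as the paper: the paper also splits $\mathcal{L}_h$ into $\mathcal{L}_h^S + \mathcal{L}_h^T$ and argues that each term preserves the ordering, using maximality for the finite-difference stencil and for the kernel integral, with the non-negativity of the maximum absorbing the contribution from $\Omega^c$ where $v_h, w_h$ vanish. The only cosmetic difference is that you pass to $e_h := v_h - w_h$ by linearity, whereas the paper phrases the same inequalities directly in terms of $v_h$ and $w_h$.
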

\begin{proof}
If $v_h - w_h$ attains a non-negative maximum at $x_i\in \mathcal{N}_h^0$, then $v_h(x_i) \geq w_h(x_i)$, and 
\[
v_h(x_i) - v_h(y) \geq w_h(x_i) - w_h(y) \quad \forall y \in \Omega. 
\] 
Then, we have 
\[
\begin{aligned}
\mathcal{L}_h^S[v_h](x_i) = -\kappa_{n,s,i}\frac{\Delta_{FD} v_h(x_i;H_i)}{H_i^{2s}}  &\geq -\kappa_{n,s,i}\frac{\Delta_{FD} w_h(x_i;H_i)}{H_i^{2s}} = \mathcal{L}_h^S[w_h](x_i) , \\
 \mathcal{L}_h^T[v_h](x_i) = \int_{\mathbb{R}^n \setminus \Omega_i} \frac{v_h(x_i) - v_h(y)}{|x_i
  - y|^{n+2s}} \mathrm{d}y	& \geq \int_{\mathbb{R}^n \setminus
  \Omega_i} \frac{w_h(x_i) - w_h(y)}{|x_i - y|^{n+2s}} \mathrm{d}y = \mathcal{L}_h^T[w_h](x_i),
 \end{aligned}
\] 
which implies $\mathcal{L}_h[v_h](x_i) \geq \mathcal{L}_h[w_h](x_i)$, as asserted.
\end{proof}

\begin{lemma}[discrete barrier function]\label{lm:barrier}
Let $b_h \in \mathbb{V}_h$ satisfy
	\begin{equation}
		\begin{aligned}
			b_h(x_i) := 1\quad \forall x_i \in \mathcal{N}_h^0.
		\end{aligned}
	\end{equation}
Denoting $\delta_i$ as the shorthand of $\delta(x_i)$, then we have
	\begin{equation}
		\mathcal{L}_h[b_h](x_i) \geq C\delta_i^{-2s}\quad \forall x_i \in \mathcal{N}_h^0,
	\end{equation}
where the constant $C$ depends only on $s$ and $\Omega$.
\end{lemma}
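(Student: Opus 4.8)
The plan is to use the fact that the prescribed nodal values force $0 \le b_h \le 1$ on all of $\mathbb{R}^n$, and then to estimate the singular part $\mathcal{L}_h^S[b_h](x_i)$ and the tail part $\mathcal{L}_h^T[b_h](x_i)$ separately. Indeed, $b_h$ is continuous and piecewise linear, equals $1$ at every interior node, vanishes on $\Omega^c$ (hence at every boundary node, since $\partial\Omega\subset\Omega^c$); since an affine function on a simplex attains its extrema at the vertices, we obtain $0 \le b_h \le 1$ everywhere, with the value $1$ attained only (possibly) at interior nodes.

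For the singular part, recall $\mathcal{L}_h^S[b_h](x_i) = -\kappa_{n,s,i} H_i^{-2s}\sum_{j=1}^n\bigl(b_h(x_i+H_ie_j) - 2b_h(x_i) + b_h(x_i-H_ie_j)\bigr)$. The stencil points $x_i\pm H_ie_j$ lie in $\Omega$ because $B_{H_i}(x_i)\subset\Omega$, so $b_h(x_i\pm H_ie_j)\le 1 = b_h(x_i)$, whence each bracketed term is $\le 1-2+1 = 0$ and $\Delta_{\rm FD}b_h(x_i;H_i)\le 0$. Since $\kappa_{n,s,i}>0$, this gives $\mathcal{L}_h^S[b_h](x_i)\ge 0$.

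For the tail part, write $\mathcal{L}_h^T[b_h](x_i) = \int_{\Omega_i^c}\frac{1-b_h(y)}{|x_i-y|^{n+2s}}\,\mathrm{d}y$, where the integrand is nonnegative everywhere because $b_h\le 1$. We may therefore discard the contribution over $\Omega\setminus\Omega_i$ — legitimate since $\Omega_i\subset\Omega$ implies $\Omega^c\subset\Omega_i^c$ — and restrict to $\Omega^c$, where $b_h\equiv 0$:
\[
\mathcal{L}_h^T[b_h](x_i) \;\ge\; \int_{\Omega^c}\frac{1}{|x_i-y|^{n+2s}}\,\mathrm{d}y \;\ge\; C_1(\Omega,s)\,\delta_i^{-2s},
\]
the last inequality being exactly the lower bound in \eqref{eq:kernel-int}. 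Adding the two estimates yields $\mathcal{L}_h[b_h](x_i) = \mathcal{L}_h^S[b_h](x_i) + \mathcal{L}_h^T[b_h](x_i) \ge C_1(\Omega,s)\,\delta_i^{-2s}$, i.e.\ the claim with $C = C_1(\Omega,s)$, which depends only on $s$ and $\Omega$.

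I do not expect a genuine obstacle here: the estimate is essentially immediate once the sign of $\mathcal{L}_h^S[b_h](x_i)$ is observed. The only points deserving a moment's care are (i) that the centered-difference stencil $x_i\pm H_ie_j$ sits inside $\Omega$, which is exactly the standing assumption $B_{H_i}(x_i)\subset\Omega$, and (ii) that the integration domain of the tail contains $\Omega^c$, so that \eqref{eq:kernel-int} applies verbatim. Note that Lemma \ref{lm:monotone} is not used in this proof, but the resulting inequality is precisely what will later be paired with monotonicity to drive the discrete comparison argument.
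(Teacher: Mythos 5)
Your proof is correct and takes essentially the same route as the paper: observe $\mathcal{L}_h^S[b_h](x_i)\ge 0$, then bound $\mathcal{L}_h^T[b_h](x_i)$ from below by the integral over $\Omega^c$ and invoke \eqref{eq:kernel-int}. You merely spell out the intermediate facts ($0\le b_h\le 1$ pointwise, sign of the centered difference, nonnegativity of the tail integrand on $\Omega\setminus\Omega_i$) that the paper leaves implicit.
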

\begin{proof}
Since $\mathcal{L}_h^S[b_h] \geq 0$, it suffices to prove $\mathcal{L}_h^T[b_h](x_i)\geq C\delta_i^{-2s}$.
By the definition of $b_h$, 
\[
\begin{aligned}
	\mathcal{L}_h^T[b_h](x_i) &= \int_{\mathbb{R}^n \setminus \Omega_i} \frac{b_h(x_i) - b_h(y)}{|x_i - y|^{n+2s}} \mathrm{d}y
	\geq \int_{\Omega^c}\frac{b_h(x_i) - b_h(y)}{|x_i - y|^{n+2s}} \mathrm{d}y\\
    & = \int_{\Omega^c}\frac{1}{|x_i - y|^{n+2s}} \mathrm{d}y \geq
    C\delta_i^{-2s},
	\end{aligned}
	\]
  where \eqref{eq:kernel-int} is used in the last step. 
\end{proof}

\begin{lemma}[discrete comparison principle]\label{lm:dcp}
Let $v_h, w_h\in \mathbb{V}_h$ be such that
\begin{equation} \label{eq:dcp}
	\mathcal{L}_h[v_h](x_i)\geq \mathcal{L}_h[w_h](x_i) \quad \forall x_i \in \mathcal{N}_h^0.
\end{equation}
Then, $v_h \geq w_h$ in $\Omega$.
\end{lemma}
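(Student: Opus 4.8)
The plan is to argue by contradiction, perturbing $v_h$ by a small multiple of the discrete barrier function $b_h$ from Lemma~\ref{lm:barrier} so that the strict inequality it provides collides with the monotonicity of Lemma~\ref{lm:monotone}. Two routine facts are needed at the outset. First, $\mathcal{L}_h$ acts linearly on $\mathbb{V}_h$: the singular part $\mathcal{L}_h^S$ is a finite-difference stencil, and the tail part $\mathcal{L}_h^T[u_h](x_i)=\int_{\Omega_i^c}(u_h(x_i)-u_h(y))|x_i-y|^{-n-2s}\,\mathrm{d}y$ is absolutely convergent --- on $\Omega_i^c\cap\Omega$ the integrand is bounded and the kernel is integrable since we are away from $x_i$, while on $\Omega^c$ we have $u_h(y)=0$ and \eqref{eq:kernel-int} bounds $\int_{\Omega^c}|x_i-y|^{-n-2s}\,\mathrm{d}y$ --- hence $\mathcal{L}_h[v_h+\epsilon b_h]=\mathcal{L}_h[v_h]+\epsilon\,\mathcal{L}_h[b_h]$. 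Second, every function in $\mathbb{V}_h$ is continuous and affine on each $T\in\mathcal{T}_h$, so it attains its extrema over $\overline\Omega$ at a node of $\mathcal{N}_h$; and since such functions vanish on $\Omega^c\supset\partial\Omega$, they vanish at every node of $\mathcal{N}_h^b$.

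The core argument proceeds as follows. Fix $\epsilon>0$ and set $g_h:=v_h+\epsilon b_h-w_h\in\mathbb{V}_h$. Suppose, for contradiction, that $g_h<0$ somewhere in $\overline\Omega$. By the second fact above, the minimum of $g_h$ over $\overline\Omega$ is attained at some node; since $g_h$ vanishes at every boundary node, a negative minimum forces that node to be an interior one, say $x_i\in\mathcal{N}_h^0$. Then $w_h-(v_h+\epsilon b_h)$ attains a strictly positive --- in particular non-negative --- maximum at $x_i$, so Lemma~\ref{lm:monotone} applied to the pair $w_h$, $v_h+\epsilon b_h$ yields $\mathcal{L}_h[w_h](x_i)\ge\mathcal{L}_h[v_h+\epsilon b_h](x_i)$. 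Expanding by linearity and then invoking the hypothesis \eqref{eq:dcp} and Lemma~\ref{lm:barrier},
$$
\mathcal{L}_h[w_h](x_i)\ \ge\ \mathcal{L}_h[v_h](x_i)+\epsilon\,\mathcal{L}_h[b_h](x_i)\ \ge\ \mathcal{L}_h[w_h](x_i)+\epsilon C\delta_i^{-2s},
$$
whence $0\ge\epsilon C\delta_i^{-2s}>0$, a contradiction. Therefore $v_h+\epsilon b_h\ge w_h$ on $\overline\Omega$ for every $\epsilon>0$, and letting $\epsilon\to0^+$ gives $v_h\ge w_h$ in $\Omega$.

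I do not expect a serious obstacle here, since the argument is the discrete counterpart of the classical barrier proof of the maximum principle. The only points requiring care are the reduction of the extremum to a node --- valid because $\mathbb{V}_h\subset C(\overline\Omega)$ is piecewise affine --- together with the vanishing of $v_h$, $w_h$ and $b_h$ at boundary nodes, and the justification that $\mathcal{L}_h^T$ is a convergent linear functional on $\mathbb{V}_h$ so that the perturbation splits additively. The conceptual heart is the use of $b_h$: monotonicity alone only yields $\mathcal{L}_h[v_h](x_i)=\mathcal{L}_h[w_h](x_i)$ at a contact point, and it is precisely the strict lower bound $\mathcal{L}_h[b_h](x_i)\ge C\delta_i^{-2s}>0$ from Lemma~\ref{lm:barrier} that upgrades this into a contradiction.
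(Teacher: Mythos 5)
Your argument is correct and follows the same route as the paper's own proof: perturb $v_h$ by $\epsilon b_h$ using the barrier of Lemma~\ref{lm:barrier}, locate a putative negative minimum of $v_h+\epsilon b_h-w_h$ at an interior node (boundary nodes giving zero), apply the monotonicity of Lemma~\ref{lm:monotone} to obtain a contradiction, and finally let $\epsilon\to0^+$. The only cosmetic difference is that the paper first isolates the strict-inequality case as a standalone step and then reduces to it, whereas you inline the two steps into a single contradiction argument; the substance is identical.
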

\begin{proof}
Since $v_h, w_h \in \mathbb{V}_h$, it suffices to prove $v_h(x_i)\geq w_h(x_i)$ for all $x_i \in\mathcal{N}_h^0$.\\
The proof splits into two steps according to whether the inequality \eqref{eq:dcp} is strict or not.

\underline{Case 1: Strict inequality.} That is, $\mathcal{L}_h[v_h](x_i) > \mathcal{L}_h[w_h](x_i), \forall x_i  \in\mathcal{N}_h^0$.
We assume by contradiction that there exists an interior node $x_k\in \mathcal{N}_h^0$ such that $v_h(x_k) < w_h(x_k)$ and 
\[
v_h(x_k) - w_h(x_k) \leq v_h(x_i) - w_h(x_i) \quad \forall x_i \in \mathcal{N}_h. 
\]
Reasoning as in Lemma \ref{lm:monotone} we obtain $\mathcal{L}_h[v_h](x_k) \leq \mathcal{L}_h[w_h](x_k)$, which contradicts to the strict inequality at $x_k$.

\underline{Case 2: Non-strict inequality.} Let $b_h\in \mathbb{V}_h$ be the discrete barrier function defined in Lemma \ref{lm:barrier}, which satisfies
\[
\mathcal{L}_h[b_h](x_i) \geq C\delta_i^{-2s} \geq C_0,
\] 
where $C_0$ is a fixed positive constant. For arbitrary $\varepsilon > 0$,
  the function $v_h + \varepsilon b_h$ satisfies $v_h + \varepsilon b_h = w_h$
  on $\partial \Omega$, and
\[
\mathcal{L}_h[v_h+ \varepsilon b_h](x_i) \geq \mathcal{L}_h[v_h](x_i) +
  \varepsilon C_0 > \mathcal{L}_h[w_h](x_i)\quad \forall x_i \in
  \mathcal{N}_h^0.
\]
Applying Case 1 we deduce 
\[
v_h + \varepsilon b_h \geq w_h \quad \forall \varepsilon > 0.
\]
Taking the limit as $\varepsilon \to 0$ leads to the asserted inequality.
\end{proof}

In the following sections, we consider the error estimate of \eqref{eq:Lh}
with the choice
\begin{equation} \label{eq:Hi}
H_i = h_i^\alpha \min\{\delta_i^{1-\alpha}, \delta_0^{1-\alpha}\},
\end{equation}
where $\alpha \in [\frac12,1]$ is a parameter that will be determined later,
$\delta_i$ is the shorthand of $\delta(x_i)$, and $\delta_0 > 0$ is a
fixed constant. The definition of $h_i$ implies that $h_i \leq
\delta_i$, or $H_i \leq h_i^\alpha \delta_i^{1-\alpha} \leq \delta_i$
and hence $B_{H_i}(x_i) \subset \Omega$.

We first divide the consistency error into three components 
\begin{equation} \label{eq:consistency} 
\begin{aligned}
\mathcal{L}_h[\mathcal{I}_h u](x_i) - \mathcal{L}[u](x_i)
& = \mathcal{L}_h^S[u](x_i) - \int_{\Omega_i} \frac{u(x_i) -
  u(y)}{|x_i - y|^{n+2s}} \,\mathrm{d}y \\
& +  \mathcal{L}_h^S[\mathcal{I}_hu](x_i) - \mathcal{L}_h^S[u](x_i) \\
& + \int_{\Omega_i^c} \frac{u(y) - \mathcal{I}_hu(y)}{|x_i -
  y|^{n+2s}} \,\mathrm{d}y,
\end{aligned}
\end{equation}
where $\mathcal{I}_h$ stands for the nodal interpolation.  A simple
observation is that the first component does not entail
$\mathcal{I}_h$, which is estimated without the underlying grids in Section
\ref{sc:Lhs}. We then estimate the other two components respectively using the information of underlying grids in Section \ref{sc:Lht}. 

\section{Consistency error I: Singular part} \label{sc:Lhs}
In this section, we quantify the operator consistency error of
$\mathcal{L}_h^S$ in conjunction with the H\"{o}lder regularity
results. 

\subsection{$\delta$-interior region}
As shown in Corollary \ref{co:rho-holder}, the H\"{o}lder
constant of $u$ at $x$ depends on $\delta(x)$. It is therefore useful
to define the node $x_i$ such that all the points around $x_i$ share a
similar $\delta(\cdot)$. Recall that $\lambda$ in
\eqref{eq:shape-reg3} and $\bar{c}_S$ in \eqref{eq:theta-uniformity},
let us define a set of {\it $\delta$-interior nodes} 
\begin{equation} \label{eq:interior-node}
\mathcal{N}_h^{0,\delta} := \left\{ x_i \in \mathcal{N}_h^0 ~:~
  \frac{\delta_i}{h_i} \geq C_\delta \right\}, \quad C_\delta :=
  \max\left\{ \left(2\max\{\bar{c}_S, 1\}\right)^{1/\alpha}, 2\lambda
  \right\} .
\end{equation} 
We also denote 
$$ 
\tilde{\Omega}_i := \Omega_i \cup B_{H_i}(x_i) \subset
\Omega.
$$ 
We have the following lemma.

\begin{lemma}[uniform $\delta$ in $\tilde{\Omega}_i \cup \omega_i$]
  \label{lm:uniform-delta} 
For any $x_i \in \mathcal{N}_h^{0,\delta}$, it holds
$$ 
\frac{1}{2}\delta_i \leq \delta(x) \leq \frac{3}{2} \delta_i \quad
  \forall x \in \tilde{\Omega}_i \cup \omega_i.
$$ 
\end{lemma}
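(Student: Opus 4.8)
The plan is to reduce the claim to a pointwise bound on the distance $|x-x_i|$ and then read off that bound from the grid hypotheses and the choice of $H_i$. The only analytic input needed is the elementary fact that the distance-to-boundary function is $1$-Lipschitz, i.e. $|\delta(x)-\delta(x_i)|\le |x-x_i|$ for all $x$. Consequently, once we show
$$
  \sup_{x\in\tilde\Omega_i\cup\omega_i}|x-x_i|\le \tfrac12\delta_i ,
$$
we immediately obtain $\tfrac12\delta_i\le\delta(x)\le\tfrac32\delta_i$ on that set, which is exactly the assertion. So the whole proof amounts to verifying this one inequality, separately on $\omega_i$ and on $\tilde\Omega_i$.

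For $x\in\omega_i=\cup_{\bar T\ni x_i}\bar T$, any such $x$ lies in a simplex $\bar T$ containing $x_i$, so $|x-x_i|\le h_T\le\max_{\bar T'\ni x_i}h_{T'}\le\lambda h_i$ by \eqref{eq:shape-reg3}. Since $x_i\in\mathcal{N}_h^{0,\delta}$ we have $\delta_i/h_i\ge C_\delta\ge 2\lambda$ by the definition \eqref{eq:interior-node}, hence $\lambda h_i\le\tfrac12\delta_i$ and the desired bound holds on $\omega_i$.

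For $x\in\tilde\Omega_i=\Omega_i\cup B_{H_i}(x_i)$, I would split into the two pieces. If $x\in B_{H_i}(x_i)$ then $|x-x_i|\le H_i$ trivially; if $x=x_i+\rho\theta\in\Omega_i$ then $|x-x_i|=\rho\le\rho_i(\theta)\le\bar c_S H_i$ by the quasi-uniformity \eqref{eq:theta-uniformity}. In either case $|x-x_i|\le\max\{1,\bar c_S\}H_i$. Using \eqref{eq:Hi} and $\alpha\in[\tfrac12,1]$, $H_i=h_i^\alpha\min\{\delta_i^{1-\alpha},\delta_0^{1-\alpha}\}\le h_i^\alpha\delta_i^{1-\alpha}$, so it suffices to have $\max\{1,\bar c_S\}\,h_i^\alpha\delta_i^{1-\alpha}\le\tfrac12\delta_i$, i.e. $\delta_i/h_i\ge(2\max\{\bar c_S,1\})^{1/\alpha}$. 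This is again guaranteed because $x_i\in\mathcal{N}_h^{0,\delta}$ gives $\delta_i/h_i\ge C_\delta\ge(2\max\{\bar c_S,1\})^{1/\alpha}$.

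I do not expect any genuine obstacle here: the definition of the threshold $C_\delta$ in \eqref{eq:interior-node} has been engineered precisely so that both of the two bounds above collapse to $\tfrac12\delta_i$. The only point requiring a little care is bookkeeping the constants — making sure the $\omega_i$-side uses the $2\lambda$ branch of $C_\delta$ and the $\tilde\Omega_i$-side uses the $(2\max\{\bar c_S,1\})^{1/\alpha}$ branch, and that the latter exponent $1/\alpha$ is handled correctly (it is the reason $\alpha\le 1$, equivalently $1/\alpha\ge 1$, does not cause trouble since we can absorb powers of $\delta_i/h_i\ge 1$). Putting the two cases together with the Lipschitz property of $\delta$ finishes the proof.
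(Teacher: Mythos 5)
Your proof is correct and follows essentially the same route as the paper's: both reduce the claim to bounding $|x-x_i|$ by $\tfrac12\delta_i$ on $\omega_i$ (via \eqref{eq:shape-reg3} and the $2\lambda$ branch of $C_\delta$) and on $\tilde\Omega_i$ (via \eqref{eq:theta-uniformity}, the choice \eqref{eq:Hi}, and the $(2\max\{\bar c_S,1\})^{1/\alpha}$ branch of $C_\delta$), then conclude using the $1$-Lipschitz property of $\delta$, which the paper invokes implicitly and you state explicitly.
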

\begin{proof} From the definition of $\mathcal{N}_h^{0,\delta}$ and
shape regularity \eqref{eq:shape-reg3}, we have 
$$
  \begin{aligned}
    \delta_i &= \delta_i^\alpha\delta_i^{1-\alpha} \geq
    2\max\{\bar{c}_S,1\} h_i^\alpha \delta_i^{1-\alpha} \geq
    2\max\{\bar{c}_S,1\}H_i, \\
    \delta_i & \geq 2\lambda h_i \geq 2 \max_{\bar{T} \ni x_i}h_T.
  \end{aligned}
$$ 
Using \eqref{eq:theta-uniformity} (quasi-uniformity condition of
$\Omega_i$), we have 
$$ 
  \begin{aligned}
    |\delta(x) - \delta_i| \leq \max\{\bar{c}_S,1\}H_i &\leq
    \frac{1}{2} \delta_i  \quad \forall x \in \Omega_i \cup
    B_{H_i}(x_i), \\ 
    |\delta(x) - \delta_i| \leq \max_{\bar{T} \ni x_i}h_T &\leq  
    \frac{1}{2}\delta_i \quad \forall x \in \omega_i, 
  \end{aligned}
$$
which completes the proof.
\end{proof}

We define the {\it $\delta$-interior region} and {\it
$\delta$-interior triangulation} as 
\begin{equation} \label{eq:interior-region}
  \Omega_h^{0,\delta} := \bigcup_{x_i \in \mathcal{N}_h^{0,\delta}} \omega_i,
  \quad 
  \mathcal{T}_h^{0,\delta} := \{T \in \mathcal{T}_h:~\exists x_i \in
  \mathcal{N}_h^{0,\delta} \mbox{such that } x_i \in \bar{T}\}.
\end{equation}
As a direct consequence of Lemma \ref{lm:uniform-delta}, for any $T
\in \mathcal{T}_h^{0,\delta}$, it holds that $\frac13 \delta(x') \leq
\delta(x) \leq 3 \delta(x')$ for any $x,x' \in T$.

\subsection{Consistency of $\mathcal{L}_h^S$}
\commentone{Thanks to} Corollary \ref{co:rho-holder} ($\delta$-dependence in
H\"{o}lder norm), we have the following lemmas.
\begin{lemma}[$\delta$-interior consistency of $\mathcal{L}_h^S$]
  \label{lm:consistency-s1}
Let $\hat{\beta} := \min\{\beta, 4\}$, and $\beta > 2s$ be such that neither $\hat{\beta} - 2s$ nor $\hat{\beta}$ is an integer.  
Let $f \in L^\infty(\Omega)$ be such that 
$\|f\|_{\hat{\beta}-2s;\Omega}^{(s)} < \infty$.
Then, the solution of \eqref{eq:fL} satisfies
\begin{equation} \label{eq:consistency-s1}
\left| 
\mathcal{L}_h^S[u](x_i) - \int_{\Omega_i} \frac{u(x_i) - u(y)}{|x_i -
  y|^{n+2s}} \,\mathrm{d}y \right| \leq C(\hat{\beta}) \delta_i^{s - \hat{\beta} }
  H_i^{\hat{\beta} - 2s} \quad \forall x_i \in \mathcal{N}_h^{0,\delta},
\end{equation}
where the constant, with emphasis of the dependence on $\hat{\beta}$, also depends on $\Omega$, $s$,
$\|f\|_{L^\infty(\Omega)}$ and $\|f\|_{\hat{\beta}-2s;\Omega}^{(s)}$.
\end{lemma}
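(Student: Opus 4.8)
The plan is to estimate the difference between the discrete operator $\mathcal{L}_h^S[u](x_i)$ and the exact singular integral $\int_{\Omega_i} \frac{u(x_i)-u(y)}{|x_i-y|^{n+2s}}\,\mathrm{d}y$ by passing through the intermediate quantity
$-\kappa_{n,s,i} H_i^{-2s} \Delta u(x_i) \cdot \frac{H_i^2}{\kappa_{n,s,i}}$ — more precisely, by comparing both quantities to the second-order differential expression $-\kappa_{n,s,i} H_i^{-2s} \Delta u(x_i)$ (up to the appropriate normalization). So I would split the error as
\begin{equation*}
\left| \mathcal{L}_h^S[u](x_i) - \int_{\Omega_i} \frac{u(x_i)-u(y)}{|x_i-y|^{n+2s}}\,\mathrm{d}y \right| \leq E_1 + E_2,
\end{equation*}
where $E_1$ measures the error of replacing the centered finite difference $\Delta_{\rm FD}u(x_i;H_i)/H_i^2$ by $\Delta u(x_i)$ (scaled by $\kappa_{n,s,i}H_i^{2-2s}$), and $E_2$ measures the error of replacing the singular integral \eqref{eq:reflective-sym} by $-\frac{H_i^{2-2s}}{4(1-s)} D^2u(x_i) : \int_{S^{n-1}}(\rho_i(\theta)/H_i)^{2-2s}\theta\otimes\theta\,\mathrm{d}S_\theta$, which by the symmetric integral lemma \eqref{eq:integral-symmetry} equals $-\kappa_{n,s,i} H_i^{2-2s}\Delta u(x_i)$.

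For $E_1$: since $H_i \leq \delta_i$ gives $B_{H_i}(x_i) \subset \tilde\Omega_i \subset \Omega$, and $x_i \in \mathcal{N}_h^{0,\delta}$, Lemma \ref{lm:uniform-delta} shows $\delta(x) \geq \frac12\delta_i$ for all $x \in B_{H_i}(x_i)$, so by Corollary \ref{co:rho-holder}, $|u|_{C^{\hat\beta}(B_{H_i}(x_i))} \leq C\delta_i^{s-\hat\beta}$ (with the constant carrying the dependence on $\hat\beta$ flagged in Remark \ref{rk:blow-up-speed}). Applying Lemma \ref{lm:fd-holder}(2) (with $\rho = H_i$, $\theta = e_j$, valid since $2 < \hat\beta \leq 4$; the case $\hat\beta \leq 2$ is handled by part (1) of that lemma and is actually simpler) to each coordinate direction and summing over $j = 1,\dots,n$ yields
\begin{equation*}
E_1 = \kappa_{n,s,i} H_i^{-2s}\left| \Delta_{\rm FD}u(x_i;H_i) + H_i^2 \Delta u(x_i) \right| \lesssim H_i^{-2s} \cdot H_i^{\hat\beta} |u|_{C^{\hat\beta}(B_{H_i}(x_i))} \lesssim \delta_i^{s-\hat\beta} H_i^{\hat\beta - 2s},
\end{equation*}
using that $\kappa_{n,s,i} = \mathcal{O}(1)$. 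For $E_2$: starting from the polar-coordinate form \eqref{eq:reflective-sym}, apply Lemma \ref{lm:fd-holder}(2) again, now with $\rho$ ranging over $(0,\rho_i(\theta))$ and direction $\theta$, to bound the integrand $|2u(x_i) - u(x_i+\rho\theta) - u(x_i-\rho\theta) + \rho^2 \frac{\partial^2 u}{\partial\theta^2}(x_i)|$ by $C\rho^{\hat\beta}|u|_{C^{\hat\beta}(B_{\rho_i(\theta)}(x_i))}$; note $B_{\rho_i(\theta)}(x_i) \subset \bar c_S B_{H_i}(x_i)$-sized ball is still inside the region where Lemma \ref{lm:uniform-delta} applies, so the same Hölder bound $\lesssim \delta_i^{s-\hat\beta}$ holds. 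Then
\begin{equation*}
E_2 \lesssim \frac12 \int_{S^{n-1}} \int_0^{\rho_i(\theta)} \frac{\rho^{\hat\beta}}{\rho^{1+2s}}\,\mathrm{d}\rho\,\mathrm{d}S_\theta \cdot \delta_i^{s-\hat\beta} = \frac{1}{2(\hat\beta - 2s)} \int_{S^{n-1}} \rho_i(\theta)^{\hat\beta - 2s}\,\mathrm{d}S_\theta \cdot \delta_i^{s-\hat\beta} \lesssim \delta_i^{s-\hat\beta} H_i^{\hat\beta - 2s},
\end{equation*}
where in the last step I use the quasi-uniformity $\rho_i(\theta) \leq \bar c_S H_i$ from \eqref{eq:theta-uniformity}; the factor $\frac{1}{\hat\beta - 2s}$ stays bounded because $\hat\beta - 2s$ is non-integer and bounded away from $0$ by hypothesis (its blow-up as $\hat\beta - 2s \to 0^+$ would be a legitimate part of the $C(\hat\beta)$ dependence, consistent with Remark \ref{rk:blow-up-speed}). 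Adding $E_1$ and $E_2$ gives \eqref{eq:consistency-s1}.

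The main obstacle — and the point requiring the most care — is keeping the various constants honest: I must verify that every ball on which I invoke Corollary \ref{co:rho-holder} genuinely lies in $\{x : \delta(x) \geq \frac12\delta_i\}$ (which is exactly what $x_i \in \mathcal{N}_h^{0,\delta}$ and Lemma \ref{lm:uniform-delta} buy me, via the inclusion $\tilde\Omega_i \subset \Omega$ with uniform $\delta$), and that the $\hat\beta$-dependence of the constant is tracked consistently through both the Hölder regularity constant (Remark \ref{rk:blow-up-speed}) and the $\frac{1}{\hat\beta - 2s}$ factor from integrating $\rho^{\hat\beta - 1 - 2s}$. A secondary subtlety is the dichotomy $\hat\beta \leq 2$ versus $2 < \hat\beta \leq 4$ in applying Lemma \ref{lm:fd-holder}: when $\hat\beta \leq 2$ there is no $D^2u(x_i)$ term, and correspondingly the differential-operator approximation in $\mathcal{L}_h^S$ must be read as directly approximating the finite difference — but since the final bound only uses that the second-difference quantities are $O(\rho^{\hat\beta})$, both cases collapse to the same estimate and the argument above goes through verbatim with part (1) of the lemma replacing part (2).
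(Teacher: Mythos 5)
Your proof takes essentially the same route as the paper's: for $\hat\beta > 2$ you insert the intermediate quantity $-\kappa_{n,s,i}H_i^{2-2s}\Delta u(x_i)$, bound $E_1$ and $E_2$ via Lemma~\ref{lm:fd-holder}(2) and Corollary~\ref{co:rho-holder} (with Lemma~\ref{lm:uniform-delta} controlling $\delta$ on the relevant balls), and handle $\hat\beta \le 2$ by the simpler argument with Lemma~\ref{lm:fd-holder}(1). The only slip is a sign typo in your $E_1$ formula, which should read $\kappa_{n,s,i}H_i^{-2s}\bigl|\Delta_{\rm FD}u(x_i;H_i) - H_i^2\Delta u(x_i)\bigr|$; this does not affect the argument.
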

\begin{proof}
We consider the proof in two cases.

\underline{Case 1: $\beta \leq 2$.} In this case, $\hat{\beta} = \beta$. Since $\kappa_{n,s,i}$ is bounded,
then using Lemma \ref{lm:fd-holder} (FD in H\"{o}lder seminorm), we have
$$ 
\begin{aligned}
  \left|\mathcal{L}_h^S[u](x_i)\right| & \leq \kappa_{n,s,i}
  H_i^{-2s} \sum_{j=1}^n \left| 2u(x_i) - u(x_i+H_ie_j) -
  u(x_i-H_ie_j) \right| \\
	& \leq C|u|_{C^\beta (B_{H_i}(x_i))} H_i^{\beta - 2s}.
\end{aligned}
$$ 
Using \eqref{eq:theta-uniformity}, \eqref{eq:reflective-sym} and Lemma \ref{lm:fd-holder}, we obtain 
$$ 
\begin{aligned}
\left| \int_{\Omega_i} \frac{u(x_i) - u(y)}{|x_i - y|^{n+2s}}
  \,\mathrm{d}y \right|
  &=  \frac{1}{2}\left|\int_{\Omega_i - x_i} \frac{2u(x_i) - u(x_i+z)
  - u(x_i-z)}{|z|^{n+2s}} \,\mathrm{d}z\right| \\
  &\leq C |u|_{C^\beta( \Omega_i)} \int_{\Omega_i - x_i}
  \frac{|z|^\beta}{|z|^{n+2s}} \,\mathrm{d}z  \\
	&= C|u|_{C^\beta(\Omega_i)}  \commentone{\int_{S^{n-1}} \int_{0}^{\rho_i(\theta)}} \rho^{\beta - 1 - 2s}\,\mathrm{d}\rho \commentone{\,\mathrm{d}S_\theta}\\
  &= C \frac{H_i^{\beta-2s}}{\beta-2s}  |u|_{C^\beta(\Omega_i)}
  \int_{S^{n-1}} \left(\frac{\rho_i(\theta)}{H_i}\right)^{\beta - 2s}
  \,\mathrm{d}S_\theta \\ 
  &\leq C|u|_{C^\beta (\Omega_i)} H_i^{\beta -
  2s}.
		\end{aligned}
		$$ 
By Lemma \ref{lm:uniform-delta} (uniform $\delta$ in $\tilde{\Omega}_i
\cup \omega_i$) and Corollary \ref{co:rho-holder} ($\delta$-dependence
in H\"{o}lder norm), we have that for any $\delta$-interior node $x_i \in
  \mathcal{N}_h^{0,\delta}$
$$
\max\left(|u|_{C^\beta(\Omega_i)}, |u|_{C^\beta(B_{H_i}(x_i))} \right)
  \leq |u|_{C^\beta(\tilde{\Omega}_i)} \leq C(\beta) \delta_i^{s-\beta},
$$
which leads to the desired result by combining the above two estimates. 

\underline{Case 2: $\beta > 2$.} By triangle inequality, we have
\[
\begin{aligned}
\left|\mathcal{L}_h^S[u](x_i) - \int_{\Omega_i} \frac{u(x_i) -
  u(y)}{|x_i - y|^{n+2s}}\; \mathrm{d} y   \right|\leq
  \left|\mathcal{L}_h^S[u](x_i)+\kappa_{n,s,i}\Delta u(x_i)H_i^{2-2s}
  \right|\\
+\left|\kappa_{n,s,i}\Delta u(x_i)H_i^{2-2s}  +\int_{\Omega_i}
  \frac{u(x_i) - u(y)}{|x_i - y|^{n+2s}} \mathrm{d} y  \right| := E_1
  + E_2.
\end{aligned}
\]
Thanks to Lemma \ref{lm:fd-holder} and Corollary \ref{co:rho-holder}, the estimate of $E_1$ is
standard:
\[
  \begin{aligned}
  E_1
  &= \kappa_{n,s,i}\left| \frac{\Delta_{\rm FD}
  u(x_i;H_i)}{H_i^{2}} - \Delta u(x_i)  
  \right|H_i^{2-2s}\\
  & \leq C|u|_{C^{\hat{\beta}}(B_{H_i}(x_i))} H_i^{\hat{\beta} -2s}
  \leq C(\hat{\beta}) \delta_i^{s-\hat{\beta}} H_i^{\hat{\beta} -2s}\quad \forall x_i \in
  \mathcal{N}_h^{0,\delta}.		
	\end{aligned}
\]
Using the definition of $\kappa_{n,s,i}$ in \eqref{eq:integral-symmetry}, 
$$ 
\begin{aligned}
  \kappa_{n,s,i}\Delta u(x_i)H_i^{2-2s} &= \frac{1}{4(1-s)} D^2u(x_i):
  \int_{S^{n-1}} \rho_i^{2-2s}(\theta) \theta \otimes \theta
  \,\mathrm{d}S_\theta \\
  &= \frac{1}{2} \commentone{ \int_{S^{n-1}}\int_0^{\rho_i(\theta)}} 
  \rho^{1-2s} D^2u(x_i):\theta \otimes \theta \,\commentone{ \mathrm{d}\rho}\,\mathrm{d}S_\theta,
\end{aligned}
$$ 
together with \eqref{eq:reflective-sym}, yields  
\[
\begin{aligned}
  E_2 & =  \left| \frac{1}{2}
  \commentone{\int_{S^{n-1}} \int_{0}^{\rho_i(\theta)}} \frac{2u(x_i) - u(x_i + \rho \theta ) -u(x_i -\rho
  \theta)+\frac{\partial^2}{\partial \theta^2}u(x_i) \rho
  ^2}{\rho^{1+2s}} \;\commentone{\mathrm{d}\rho}\; \mathrm{d}S_\theta \right|\\
  & \leq  C\int_{S^{n-1}} \int_{0}^{\rho_i(\theta)}
  |u|_{C^{\hat{\beta}}(B_\rho(x_i))} \rho^{\hat{\beta}-1-2s}\mathrm{d}\rho
  \mathrm{d}S_\theta  \leq  C(\hat{\beta}) \delta_i^{s-\hat{\beta}} H_i^{\hat{\beta} -2s} \quad
  \forall x_i \in \mathcal{N}_h^{0,\delta}.
	\end{aligned}
	\]
This completes the proof.
\end{proof}

\begin{lemma}[global consistency of $\mathcal{L}_h^S$] 
\label{lm:consistency-s2}
Let $\beta > 2s$ and  $f \in L^\infty(\Omega)$ be such that $\|f\|_{\beta-2s;\Omega}^{(s)}< \infty$.
Then, the solution of \eqref{eq:fL} satisfies
\begin{equation} \label{eq:consistency-s2}
\left| \mathcal{L}_h^S[u](x_i) - \int_{\Omega_i} \frac{u(x_i) -
  u(y)}{|x_i - y|^{n+2s}} \,\mathrm{d}y \right| \leq C \delta_i^{-s}
  \quad \forall x_i \in \mathcal{N}_h^{0}.
\end{equation}
Here, the constant $C$ depends on $\Omega$, $s$, $\beta$,
$\|f\|_{L^\infty(\Omega)}$ and $\|f\|_{\beta-2s;\Omega}^{(s)}$, but will not blow up as $\beta - 2s$ or $\beta$ being an integer.
\end{lemma}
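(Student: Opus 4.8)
The plan is to establish \eqref{eq:consistency-s2} by bounding the two terms on the left-hand side separately for an \emph{arbitrary} interior node $x_i \in \mathcal{N}_h^0$, without assuming the $\delta$-interior condition. The key point is that the choice \eqref{eq:Hi} gives $H_i \leq \delta_i$, together with the quasi-uniformity \eqref{eq:theta-uniformity} which forces $\Omega_i \cup B_{H_i}(x_i) \subset B_{\bar c_S H_i}(x_i) \cup B_{H_i}(x_i) \subset \Omega$, so all the function evaluations appearing in $\mathcal{L}_h^S[u](x_i)$ and in the integral over $\Omega_i$ involve points $y$ with $|x_i - y| \lesssim H_i \leq \delta_i$. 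On such a ball, even near the boundary, we have the crude global H\"older bound $\|u\|_{C^s(\mathbb{R}^n)} \leq C\|f\|_{L^\infty(\Omega)}$ from Proposition \ref{pp:global-holder}, which is the regularity that survives uniformly up to $\partial\Omega$.

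First I would bound $|\mathcal{L}_h^S[u](x_i)|$. Using \eqref{eq:Ls}, boundedness of $\kappa_{n,s,i}$, and the $C^s$-H\"older bound, each centered difference satisfies $|2u(x_i) - u(x_i+H_ie_j) - u(x_i-H_ie_j)| \leq C\|u\|_{C^s(\mathbb{R}^n)} H_i^{s}$, so $|\mathcal{L}_h^S[u](x_i)| \leq C H_i^{-2s} H_i^{s} = C H_i^{-s} \leq C \delta_i^{-s}$, since $H_i \leq \delta_i$ and $-s < 0$. Next I would bound the singular integral: rewriting it via \eqref{eq:reflective-sym} in the symmetrized form and applying part (1) of Lemma \ref{lm:fd-holder} with $\beta = s \leq 1 < 2$ gives
\[
\left|\int_{\Omega_i} \frac{u(x_i)-u(y)}{|x_i-y|^{n+2s}}\,\mathrm{d}y\right|
\leq C\|u\|_{C^s(\mathbb{R}^n)} \int_{S^{n-1}}\int_0^{\rho_i(\theta)} \rho^{s-1-2s}\,\mathrm{d}\rho\,\mathrm{d}S_\theta
= C\|u\|_{C^s(\mathbb{R}^n)} \frac{\rho_i(\theta)^{-s}}{(-s)}\Big|\cdots,
\]
and since $-s < 0$ the $\rho$-integral converges at $0$; using \eqref{eq:theta-uniformity} the result is $\leq C \|u\|_{C^s(\mathbb{R}^n)} H_i^{-s} \leq C\delta_i^{-s}$. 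Combining the two bounds via the triangle inequality yields \eqref{eq:consistency-s2}.

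The main point to be careful about — and the only genuine obstacle — is the dependence of the constant on $\beta$: the statement claims $C$ does \emph{not} blow up as $\beta - 2s$ or $\beta$ approaches an integer, in contrast to Lemma \ref{lm:consistency-s1}. This is automatic here precisely because the argument above uses \emph{only} the $C^s$ regularity from Proposition \ref{pp:global-holder} (whose constant depends on $\Omega, s$ alone) and never invokes Proposition \ref{pp:delta-holder} or Corollary \ref{co:rho-holder}, whose constants carry the $|\beta - m|^{-1}$ blow-up described in Remark \ref{rk:blow-up-speed}. The only place $\beta$ and $\|f\|_{\beta-2s;\Omega}^{(s)}$ enter the hypotheses is to guarantee, via Corollary \ref{co:rho-holder}, that $u \in C^\beta(\Omega)$ so that $\mathcal{L}_h^S[u](x_i)$ is well defined (in particular $u$ is at least $C^2$ locally so the centered difference and the symmetrized integral make sense); they do not appear in the final estimate, hence no blow-up. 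I would state this transparency about the constant explicitly at the end of the proof.
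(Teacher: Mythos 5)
Your proof contains two arithmetic errors, both stemming from the same misconception that the global $C^s$ regularity from Proposition~\ref{pp:global-holder} alone suffices here. It does not, and both places where you believe the $C^s$ estimate closes the bound actually break down.

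First, the bound on the finite difference gives $|\mathcal{L}_h^S[u](x_i)| \leq C H_i^{-s}$, and you then claim $H_i^{-s} \leq C\delta_i^{-s}$ ``since $H_i \leq \delta_i$ and $-s < 0$.'' But raising to a negative power \emph{reverses} the inequality: $H_i \leq \delta_i$ gives $H_i^{-s} \geq \delta_i^{-s}$, which is the wrong direction. If $H_i \ll \delta_i$ (which happens for $\delta$-interior nodes where $H_i \sim h^\alpha \delta_i^{1-\alpha/\mu}$), the quantity $H_i^{-s}$ can be much larger than $\delta_i^{-s}$, so the $C^s$ estimate is useless there. The paper's own proof handles this by interpolating between the $C^s$ bound (good when $H_i/\delta_i$ is close to $1$) and a $C^{\beta_1}$ bound with $\beta_1 \in (2s, \min\{\beta,2\}]$ (good when $H_i/\delta_i$ is small), with a case split on whether $H_i/\delta_i \gtrless 1/2$.

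Second, and more severely, the singular integral estimate fails. With only $|u|_{C^s}$ the integrand is $\rho^{s-1-2s} = \rho^{-1-s}$, and $\int_0^{\rho_i(\theta)} \rho^{-1-s}\,\mathrm{d}\rho$ \emph{diverges} at $\rho = 0$ because $-1-s < -1$; your claim that ``since $-s<0$ the $\rho$-integral converges at $0$'' is exactly backwards. This is precisely why the P.V.\ integral needs $u$ to be better than $C^{2s}$ at the integration point. The paper's argument uses Corollary~\ref{co:rho-holder} with an auxiliary index $\beta_0 \in (2s, \min\{1+s,\beta\})$ chosen so that neither $\beta_0$ nor $\beta_0 - 2s$ is an integer, together with the pointwise bound $|u|_{C^{\beta_0}(B_\rho(x_i))} \lesssim (\delta_i - \rho)^{s-\beta_0}$ inside the ball, which yields a convergent beta-function integral $\delta_i^{-s} B(s-\beta_0+1,\,\beta_0-2s)$. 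The lemma's claim that the constant does not blow up as $\beta$ or $\beta-2s$ approaches an integer is therefore \emph{not} because the higher regularity is unused (it is essential), but because the argument can always select an auxiliary $\beta_0 < \min\{1+s,\beta\}$ and $\beta_1 \leq \min\{\beta,2\}$ lying strictly away from the problem values, and the final constant depends only on these internal choices.
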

\begin{proof}
We notice that for any $ \rho\geq 0$, it holds that
$\mathrm{dist}\left(B_\rho(x_i),\partial\Omega \right) + \rho \geq
  \delta_i$. Since $\beta > 2s$, there exists $\beta_0 \in (2s,
  \min\{1+s, \beta\})$, such that neither $\beta_0$ nor $\beta_0 - 2s$ is an integer. Then, we have $u \in
  C^{\beta_0}(\tilde{\Omega}_i)$, whence 
\[
\begin{aligned}
\left|\int_{\Omega_i} \frac{u(x_i) - u(y)}{|x_i - y|^{n+2s}}
  \,\mathrm{d}y\right| 
  &= \left| \frac{1}{2}
  \commentone{\int_{S^{n-1}}\int_{0}^{\rho_i(\theta)}} \frac{2u(x_i) - u(x_i+\rho\theta) - u(x_i - \rho
  \theta)}{\rho^{1+2s}} \commentone{\; \mathrm{d}\rho}\; \mathrm{d}S_\theta\right|\\
  & \leq C \int_{0}^{\delta_i} \left|u
  \right|_{C^{\beta_0}(B_\rho(x_i))} \rho^{\beta_0}
  \frac{1}{\rho^{1+2s}} \,\mathrm{d}\rho \quad\quad\quad(\mbox{by
  Lemma } \ref{lm:fd-holder})\\
  & \leq C(\beta_0)\int_{0}^{\delta_i} \left(\delta_i
  -\rho\right)^{s-\beta_0} \rho^{\beta_0-1-2s}\; \mathrm{d} \rho \quad\quad\quad(\mbox{by
  	Corollary } \ref{co:rho-holder})\\
  & = C(\beta_0)\delta_i^{-s} B\left(s-\beta_0+1, \beta_0-2s\right).
  \end{aligned}
\]
Here, $B(a,b)$ is the beta function. Moreover, using Proposition
\ref{pp:global-holder} and Corollary \ref{co:rho-holder}, we take
$\beta_1 \in (2s, \min\{\beta, 2\}]$, such that neither $\beta_1$ nor $\beta_1 - 2s$ is an integer, to obtain 
\[
\begin{aligned}
\left| \mathcal{L}_h^S[u](x_i) \right| 
  &= \kappa_{n,s,i}\left| \frac{\Delta_{\rm FD} u(x_i;H_i)}{H_i^{2}}
  \right|H_i^{2-2s} \\ 
  & \leq C\min\left\{ 
  |u|_{C^s(B_{H_i}(x_i))} H_i^{-s}, |u|_{C^{\beta_1}(B_{H_i}(x_i))}
  H_i^{\beta_1 -2s} \right\}\\
  & \leq \delta_i^{-s}
  \min\left\{C_1\left(\frac{H_i}{\delta_i}\right)^{-s}, 
  C_2(\beta_1)\left(1-\frac{H_i}{\delta_i}\right)^{s-\beta_1} \left(
  \frac{H_i}{\delta_i} \right)^{\beta_1-2s} \right\}  \\
  & \leq \max\{C_1, C_2(\beta_1)\} 2^s \delta_i^{-s}.
\end{aligned}
\]
  Here, the last step is deduced by considering whether
  $\frac{H_i}{\delta_i} \in [1/2,1)$ or $\frac{H_i}{\delta_i} \in
  (0,1/2)$. The proof is thus complete. 
\end{proof}

\section{Consistency error II: interpolation and tail part}
\label{sc:Lht}

In this section, we quantify the consistency errors of interpolation
and $\mathcal{L}_h^T$ in \eqref{eq:consistency} on the following {\it
graded grids} with parameter $h$: There is a number $\mu \geq 1$ such
that for any $T \in \mathcal{T}_h$,
\begin{equation} \label{eq:graded-h}
h_T \eqsim \left\{
\begin{array}{ll}
h^{\mu} & \mbox{if } T \cap \partial \Omega \neq \varnothing, \\
h \mathrm{dist}(T, \partial\Omega)^{\frac{\mu - 1}{\mu}} & \mbox{if }
 T \cap \partial \Omega = \varnothing,
\end{array}
\right.
\end{equation}
\commentone{where $X \lesssim Y$ means that  there exists a constants $C> 0$ such that $X \leq C Y$, and $X \eqsim Y$ means that $X \lesssim Y$ and $Y \lesssim X$.} 

As a direct consequence, 
\begin{equation} \label{eq:graded-lam0}
\lambda_0^{-1} h^\mu \leq h_T \quad \forall T \in \mathcal{T}_h.
\end{equation}

We note that the quasi-uniform
grid corresponds to the case in which $\mu = 1$, thus the results in
this section are applicable to the quasi-uniform grids. Further, the
construction \eqref{eq:graded-h} yields a total number of degrees of
freedom (cf.
\cite{babuvska1979direct}) 
\begin{equation} \label{eq:graded-dof}
N := \mathrm{dim} \mathbb{V}_h \eqsim \left\{
\begin{array}{ll}
h^{-n} & \mbox{if } \mu < \frac{n}{n-1}, \\
 |\log h| h^{-n} & \mbox{if } \mu = \frac{n}{n-1}, \\
h^{(1-n)\mu} & \mbox{if } \mu > \frac{n}{n-1}.
\end{array}
\right.
\end{equation}

\subsection{Characterization of $\delta$-interior region}
We begin with the characterization of the $\delta$-interior region
$\Omega_h^{0,\delta}$ defined in \eqref{eq:interior-region}.  The
first observation is that any point outside $\Omega_h^{0,\delta}$
should be very close to $\partial \Omega$, which is rigorously stated
in the following lemma. 

\begin{lemma}[$\delta$ outside $\Omega_h^{0,\delta}$]
\label{lm:delta-outside}
On the graded grids, there exists $C_b > 0$ such that  
  \begin{subequations} \label{eq:delta-outside}
\begin{align} 
\mathrm{dist}(T,\partial\Omega) &\leq C_b h^\mu \quad \forall T \in
  \mathcal{T}_h \setminus \mathcal{T}_h^{0,\delta},
  \label{eq:delta-outside1}\\
  \delta(y) & \leq C_b h^\mu \quad \forall y \in \Omega \setminus
  \Omega_h^{0,\delta}. \label{eq:delta-outside2}
\end{align}
\end{subequations}
\end{lemma}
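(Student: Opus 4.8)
The plan is to bound the distance to $\partial\Omega$ for any triangle $T$ that lies outside the $\delta$-interior triangulation $\mathcal{T}_h^{0,\delta}$, and then derive the pointwise bound on $\delta(y)$ from it. First I would fix $T \in \mathcal{T}_h \setminus \mathcal{T}_h^{0,\delta}$ and pick any node $x_i \in \bar T$ with $x_i \in \mathcal{N}_h^0$; by definition of $\mathcal{T}_h^{0,\delta}$ none of the nodes of $T$ lie in $\mathcal{N}_h^{0,\delta}$, so every such $x_i$ satisfies $\delta_i < C_\delta h_i$. Using $h_i \eqsim h_T$ (from \eqref{eq:shape-reg3} and local quasi-uniformity) and the triangle-side bound $\mathrm{diam}(T) \le h_T$, the distance from any point of $T$ to the boundary is at most $\delta_i + \mathrm{diam}(T) \lesssim h_i + h_T \lesssim h_T$. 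If $T$ touches $\partial\Omega$ then $\mathrm{dist}(T,\partial\Omega)=0 \le C_b h^\mu$ trivially; otherwise $T\cap\partial\Omega=\varnothing$, so by the graded-grid relation \eqref{eq:graded-h} we have $h_T \eqsim h\,\mathrm{dist}(T,\partial\Omega)^{(\mu-1)/\mu}$, and combining this with $\mathrm{dist}(T,\partial\Omega) \lesssim h_T$ gives $\mathrm{dist}(T,\partial\Omega) \lesssim h\,\mathrm{dist}(T,\partial\Omega)^{(\mu-1)/\mu}$, i.e. $\mathrm{dist}(T,\partial\Omega)^{1/\mu} \lesssim h$, which rearranges to \eqref{eq:delta-outside1}.

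For \eqref{eq:delta-outside2}, take $y \in \Omega \setminus \Omega_h^{0,\delta}$ and let $T$ be a triangle containing $y$. Since $\Omega_h^{0,\delta} = \bigcup_{x_i \in \mathcal{N}_h^{0,\delta}} \omega_i \supseteq \bigcup_{T \in \mathcal{T}_h^{0,\delta}} T$ (each $T \in \mathcal{T}_h^{0,\delta}$ has a node $x_i \in \mathcal{N}_h^{0,\delta}$, so $T \subset \omega_i$), the fact that $y \notin \Omega_h^{0,\delta}$ forces $T \notin \mathcal{T}_h^{0,\delta}$. Then $\delta(y) \le \mathrm{dist}(T,\partial\Omega) + \mathrm{diam}(T) \le C_b h^\mu + h_T$, and since $T\notin\mathcal{T}_h^{0,\delta}$ one also has (as in the first part, via \eqref{eq:graded-h} and \eqref{eq:delta-outside1}) $h_T \lesssim h^\mu$; absorbing the constants yields \eqref{eq:delta-outside2} after possibly enlarging $C_b$.

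The only mildly delicate point is the circular-looking estimate $\mathrm{dist}(T,\partial\Omega) \lesssim h\,\mathrm{dist}(T,\partial\Omega)^{(\mu-1)/\mu}$: one must be careful that the exponent $(\mu-1)/\mu$ is strictly less than $1$ (which holds since $\mu \ge 1$, with the case $\mu = 1$ being trivial because the grid is quasi-uniform and $h_T \eqsim h$ directly gives $\mathrm{dist}(T,\partial\Omega) \lesssim h = h^\mu$), so that dividing through by $\mathrm{dist}(T,\partial\Omega)^{(\mu-1)/\mu}$ is legitimate and produces the genuine gain $\mathrm{dist}(T,\partial\Omega)^{1/\mu} \lesssim h$. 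I expect this bookkeeping of exponents — together with tracking how the constants $\lambda_1, \lambda_2, \lambda, C_\delta, \lambda_0$ and the hidden constants in \eqref{eq:graded-h} combine into a single $C_b$ — to be the main (though routine) obstacle; no new idea beyond the definitions and the grading hypothesis is needed.
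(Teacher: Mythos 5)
Your proposal is correct and follows essentially the same route as the paper's proof: bound $\mathrm{dist}(T,\partial\Omega)$ by $\delta(x_j)\lesssim h_j\eqsim h_T$ using that some vertex of $T$ fails the $\delta$-interior condition, then close the self-referential inequality $\mathrm{dist}(T,\partial\Omega)\lesssim h\,\mathrm{dist}(T,\partial\Omega)^{(\mu-1)/\mu}$ via the grading hypothesis (with the boundary-touching case trivial), and finally pass from the element bound to the pointwise bound via $h_T\lesssim h^\mu$. The only cosmetic difference is that you bound $\sup_{x\in T}\delta(x)\le\delta_i+\mathrm{diam}(T)$ rather than directly using $\mathrm{dist}(T,\partial\Omega)\le\delta(x_j)$, which is slightly less direct but equally valid.
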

\begin{proof}
For any $T\in \mathcal{T}_h \setminus \mathcal{T}_h^{0,\delta}$, the case in which $T \cap \partial \Omega \neq \varnothing$ is trivial
since the left hand side vanishes. Below we consider the case in which
$T \cap \partial\Omega = \varnothing$.  Let $x_j$ be any vertex of
$T$.  From the definition of $\delta$-interior triangulation
\eqref{eq:interior-region}, we see that $x_j \notin
\mathcal{N}_h^{0,\delta}$, whence from the local quasi-uniformity
\eqref{eq:shape-reg2},
$$ 
\mathrm{dist}(T,\partial\Omega) \leq \delta(x_j) \leq C_\delta h_j
  \leq \lambda_2 C_\delta h_T,
$$ 
where $C_\delta$ is defined in \eqref{eq:interior-node}. Together with
  the graded grid condition \eqref{eq:graded-h}, we obtain 
$$ 
  \mathrm{dist}(T,\partial\Omega) \leq \lambda_0\lambda_2C_\delta h
  \mathrm{dist}(T,\partial\Omega)^{(\mu-1)/\mu},
$$ 
which leads to \eqref{eq:delta-outside1}. Due to \eqref{eq:graded-h}, we
then have $h_T \leq C h^\mu$ for $T\in \mathcal{T}_h \setminus \mathcal{T}_h^{0,\delta}$ and hence \eqref{eq:delta-outside2}.
The proof is thus complete. 
\end{proof}

\begin{lemma}[inside-outside distance]
\label{lm:in-out-distance}
On the graded grids, there exists $C_d > 0$ such that  
\begin{equation} \label{eq:in-out-distance}
 |x_i - y| \geq (1+C_d)^{-1} \delta_i \quad \forall x_i
 \in \mathcal{N}_h^{0,\delta}, y \in \Omega \setminus
 \Omega_h^{0,\delta}.
\end{equation}
\end{lemma}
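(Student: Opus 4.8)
The plan is to bound $|x_i-y|$ from below in two different ways and then argue by a short case distinction, rather than by a single triangle inequality.

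The first bound is purely combinatorial. Since $x_i\in\mathcal{N}_h^{0,\delta}$, the star $\omega_i$ is one of the patches in the union defining $\Omega_h^{0,\delta}$ in \eqref{eq:interior-region}, so $\omega_i\subset\Omega_h^{0,\delta}$; hence $y\in\Omega\setminus\Omega_h^{0,\delta}$ implies $y\notin\omega_i$. Because $B_{h_i}(x_i)\subset\omega_i$ by the very definition of $h_i$, this yields $|x_i-y|\ge h_i$. The second bound uses the $1$-Lipschitz continuity of $\delta(\cdot)$ together with Lemma~\ref{lm:delta-outside}: by \eqref{eq:delta-outside2} we have $\delta(y)\le C_b h^\mu$, and choosing any $T\in\mathcal{T}_h$ with $x_i\in\bar T$ (which exists since $x_i$ is an interior node), \eqref{eq:graded-lam0} and \eqref{eq:shape-reg3} give $h^\mu\le \lambda_0 h_T\le \lambda_0\lambda\, h_i$; therefore $\delta(y)\le \Lambda h_i$ with $\Lambda:=C_b\lambda_0\lambda$, and consequently $|x_i-y|\ge |\delta_i-\delta(y)|\ge \delta_i-\Lambda h_i$.

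With these two inequalities in hand, I would split according to the size of $h_i$ relative to $\delta_i$. If $h_i\ge \tfrac{\delta_i}{2(\Lambda+1)}$, the first bound gives $|x_i-y|\ge h_i\ge \tfrac{1}{2(\Lambda+1)}\delta_i$. If instead $h_i<\tfrac{\delta_i}{2(\Lambda+1)}$, then $\Lambda h_i<\tfrac{\Lambda}{2(\Lambda+1)}\delta_i<\tfrac12\delta_i$, and the second bound gives $|x_i-y|\ge \delta_i-\Lambda h_i>\tfrac12\delta_i\ge \tfrac{1}{2(\Lambda+1)}\delta_i$. In both cases $|x_i-y|\ge\tfrac{1}{2(\Lambda+1)}\delta_i$, so the claim holds with $C_d:=2\Lambda+1=2C_b\lambda_0\lambda+1$, a constant depending only on $\Omega$ and $s$ through the shape-regularity/grading constants.

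The only slightly non-obvious point — and hence the main obstacle — is recognizing that the naive route ``$\delta_i\le\delta(y)+|x_i-y|$ with $\delta(y)$ small'' does \emph{not} close the estimate, because $C_b h^\mu$ need not be an a~priori small fraction of $\delta_i$; one genuinely needs the auxiliary inequality $|x_i-y|\ge h_i$ and the dichotomy above. Everything else is a routine combination of the mesh conditions \eqref{eq:triangulation}--\eqref{eq:shape-reg3}, the grading relation \eqref{eq:graded-lam0}, and Lemma~\ref{lm:delta-outside}.
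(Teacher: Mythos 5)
Your proof is correct and follows essentially the same strategy as the paper's: both combine the triangle-inequality bound $|x_i-y|\ge\delta_i-\delta(y)$ with $\delta(y)\lesssim h^\mu$ from Lemma~\ref{lm:delta-outside}, and the fallback bound $|x_i-y|\ge h_i\gtrsim h^\mu$ from $B_{h_i}(x_i)\subset\omega_i\subset\Omega_h^{0,\delta}$, resolved by a dichotomy (yours thresholds $h_i$ against $\delta_i$, the paper thresholds $\delta_i$ against $h^\mu$, which is the same argument up to constants).
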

\begin{proof}
In view of \eqref{eq:shape-reg3} and \eqref{eq:graded-lam0}, we define
$C_d := \lambda_0\lambda C_b$. Next, we prove the assertion by a
classified discussion on $\delta_i$.  Specifically, if $\delta_i \geq
  (1+C_d)(\lambda_0\lambda)^{-1}h^{\mu}$, then by
\eqref{eq:delta-outside2},
$$ 
|x_i - y| \geq \delta_i - \delta(y) \geq \delta_i - C_b h^{\mu}
  \geq [1 - (1+C_d)^{-1}C_d] \delta_i = (1+C_d)^{-1} \delta_i.
$$ 
Otherwise, by the definition of $\Omega_h^{0,\delta}$ in
\eqref{eq:interior-region}, the sphere centered at $x_i$ with radius
$h_i$ contains in $\Omega_h^{0,\delta}$. Therefore, using
\eqref{eq:shape-reg3} and \eqref{eq:graded-lam0},
  $$ 
  |x_i - y| \geq h_i \geq \lambda^{-1} \lambda_0^{-1} h^{\mu} >
  (1+C_d)^{-1} \delta_i.
  $$ 
  This finishes the proof.
\end{proof}

\subsection{Consistency of interpolation}
Following the standard polynomial approximation theory \cite[Section
4]{brenner2008mathematical}, for any $T \in \mathcal{T}_h$ and $v \in
C^\beta(T)$ ($\beta \geq 0$), it holds that
\begin{equation} \label{eq:approx}
  \|v - \mathcal{I}_h v\|_{L^\infty(T)} \leq C
  \|v\|_{C^{\tilde{\beta}}(T)} h_T^{\tilde{\beta}}, \quad
  \tilde{\beta}:= \min\{\beta,2\}.
\end{equation}
\commentone{We recall the symbols} \commenttwo{$\hat{\beta} := \min\{\beta,4\}$ and $\tilde{\beta} := \min\{\beta,2\}$}, \commentone{which will be used many times in this Section.} 

We first exploit the approximation error in the $\delta$-interior region.
Thanks to Lemma \ref{lm:uniform-delta} (uniform $\delta$ in
$\tilde{\Omega}_i \cup \omega_i$) and Corollary \ref{co:rho-holder}
($\delta$-dependence in H\"{o}lder norm), we conclude from
\eqref{eq:approx} that, if neither $\tilde{\beta}-2s$ nor $\tilde{\beta}$ is
an integer, the solution of \eqref{eq:fL} satisfies  
\begin{equation} \label{eq:in-approx}
  |u(y) - \mathcal{I}_h u(y)| \leq C(\tilde{\beta}) \delta(y)^{s -
  \tilde{\beta}} (h\delta(y)^{1 - \frac{1}{\mu}})^{\tilde{\beta}} =
  C(\tilde{\beta}) h^{\tilde{\beta}}\delta(y)^{s -
  \frac{\tilde{\beta}}{\mu}} \quad \forall y \in
  \Omega_h^{0,\delta},
\end{equation}
where the constant depends on $\Omega$, $s$, $\tilde{\beta}$,
$\|f\|_{L^\infty(\Omega)}$ and $\|f\|_{\tilde{\beta}
-2s;\Omega}^{(s)}$.  \commentone{In case that $s - \tilde{\beta}/\mu < 0$, we can take $\tilde{\beta}_1 \in (\mu s, \tilde{\beta}]$, such that} \commenttwo{neither} \commentone{$\tilde{\beta}_1$ nor $\tilde{\beta}_1 - 2s$ is an integer, then the fact $h^{\mu} \lesssim \delta(y)$ for all $y \in \Omega_h^{0,\delta}$ leads to $|u(y) - \mathcal{I}_hu(y)| \leq C(\tilde{\beta}_1) h^{\tilde{\beta}_1} \delta(y)^{s -
  \frac{\tilde{\beta}_1}{\mu}} \leq C h^{\mu s}$.} On the other hand, the global $C^s$-H\"{o}lder
regularity (Proposition \ref{pp:global-holder}) together with Lemma
\ref{lm:delta-outside} ($\delta$ outside $\Omega_h^{0,\delta}$) yield 
\begin{equation} \label{eq:out-approx}
\|u - \mathcal{I}_h u\|_{L^\infty(T)} \leq C h^{\mu s} \quad \forall T
  \in \mathcal{T}_h \setminus \mathcal{T}_h^{0,\delta},
\end{equation}
where $C$ depends on $\Omega$, $s$ and $\|f\|_{L^\infty(\Omega)}$. As
a consequence of \eqref{eq:in-approx}, \eqref{eq:out-approx}, we have that \commentone{for any $\mu \geq 1$},
\begin{equation} \label{eq:global-approx}
  \|u - \mathcal{I}_h u\|_{L^\infty(\Omega)} \leq \max\{Ch^{\mu s},
  C(\tilde{\beta}) h^{\tilde{\beta}}\}, 
 \end{equation}
where it is assumed again that neither $\tilde{\beta}-2s$ nor $\tilde{\beta}$
is an integer.

We are now in the position to discuss the consistency of interpolation
in \eqref{eq:consistency}.
\begin{lemma}[consistency of interpolation] \label{lm:consistency-Ih}
Let $\beta > 2s$ be such that neither $\tilde{\beta} - 2s$ nor $\tilde{\beta}$
  is an integer. Let $f \in L^\infty(\Omega)$ be such that
  $\|f\|_{\tilde{\beta}-2s;\Omega}^{(s)} <
  \infty$.  On the graded grids \eqref{eq:graded-h} \commentone{with any $\mu \geq 1$}, the solution of \eqref{eq:fL}
  satisfies
\begin{subequations}
\begin{align}
  \left|\mathcal{L}_h^S[\mathcal{I}_hu](x_i) - \mathcal{L}_h^S[u](x_i)
  \right| &\leq C(\tilde{\beta})
  h^{\tilde{\beta}}\delta_i^{s-\frac{\tilde{\beta}}{\mu}}
  H_i^{-2s} \qquad\qquad \forall x_i \in \mathcal{N}_h^{0,\delta},
  \label{eq:consistency-Ih1}   \\
  \left|\mathcal{L}_h^S[\mathcal{I}_hu](x_i) -
  \mathcal{L}_h^S[u](x_i) \right| &\leq \max\{Ch^{\mu s},
  C(\tilde{\beta}) h^{\tilde{\beta}}\} H_i^{-2s} \quad \forall x_i \in
  \mathcal{N}_h^{0}, \label{eq:consistency-Ih2}
\end{align}
\end{subequations}
where the constant $C(\tilde{\beta})$ depends also on $\Omega$, $s$,
  $\|f\|_{L^\infty(\Omega)}$ and $\|f\|_{\tilde{\beta}
  -2s;\Omega}^{(s)}$.
\end{lemma}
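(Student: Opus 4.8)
The plan is to reduce both bounds to a pointwise estimate of the nodal interpolation error $e:=\mathcal{I}_hu-u$ at the finitely many stencil points, and then feed in the interpolation estimates \eqref{eq:in-approx}--\eqref{eq:global-approx} already at our disposal. First I would record the algebraic identity behind the reduction. Since $x_i\in\mathcal{N}_h^0$ is a node, $e(x_i)=0$, so the centered stencil sees only the off-center values:
\[
\mathcal{L}_h^S[\mathcal{I}_hu](x_i)-\mathcal{L}_h^S[u](x_i)=\mathcal{L}_h^S[e](x_i)=-\kappa_{n,s,i}H_i^{-2s}\sum_{j=1}^n\bigl(e(x_i+H_ie_j)+e(x_i-H_ie_j)\bigr).
\]
Using $\kappa_{n,s,i}=\mathcal{O}(1)$ this yields
\[
\bigl|\mathcal{L}_h^S[\mathcal{I}_hu](x_i)-\mathcal{L}_h^S[u](x_i)\bigr|\le C\,H_i^{-2s}\max_{1\le j\le n}\bigl(|e(x_i+H_ie_j)|+|e(x_i-H_ie_j)|\bigr),
\]
so everything comes down to bounding $|e(y)|$ at the $2n$ points $y=x_i\pm H_ie_j$, all of which lie in $\overline{B_{H_i}(x_i)}\subset\tilde{\Omega}_i\subset\Omega$.

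Estimate \eqref{eq:consistency-Ih2} is then immediate: bounding $|e(y)|\le\|u-\mathcal{I}_hu\|_{L^\infty(\Omega)}$ and inserting the global interpolation bound \eqref{eq:global-approx} produces exactly the factor $\max\{Ch^{\mu s},C(\tilde\beta)h^{\tilde\beta}\}$. For the sharper $\delta$-interior estimate \eqref{eq:consistency-Ih1}, the plan is to show that, for $x_i\in\mathcal{N}_h^{0,\delta}$, each off-center point $y=x_i\pm H_ie_j$ lies in $\Omega_h^{0,\delta}$ (equivalently, the element of $\mathcal{T}_h$ containing it belongs to $\mathcal{T}_h^{0,\delta}$). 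Indeed, Lemma~\ref{lm:uniform-delta} gives $\tfrac12\delta_i\le\delta(y)\le\tfrac32\delta_i$, while Lemma~\ref{lm:delta-outside} forces any point of $\Omega\setminus\Omega_h^{0,\delta}$ to satisfy $\delta\le C_bh^\mu$; since $\delta_i\ge C_\delta h_i\gtrsim h^\mu$ by \eqref{eq:shape-reg3} and \eqref{eq:graded-lam0}, and the element containing $y$ has diameter $\lesssim h\,\delta_i^{(\mu-1)/\mu}\eqsim h_i$ by the graded structure, that element cannot escape $\Omega_h^{0,\delta}$. Granting this, \eqref{eq:in-approx} applies at $y$ and gives $|e(y)|\le C(\tilde\beta)h^{\tilde\beta}\delta(y)^{s-\tilde\beta/\mu}$; replacing $\delta(y)$ by $\delta_i$ via Lemma~\ref{lm:uniform-delta} (the exponent $s-\tilde\beta/\mu$ being bounded) and substituting into the reduction above delivers \eqref{eq:consistency-Ih1}.

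The main obstacle is precisely the geometric bookkeeping in that last step. The scale $H_i=h_i^\alpha\min\{\delta_i^{1-\alpha},\delta_0^{1-\alpha}\}$ can be substantially larger than the local mesh radius $h_i$ at interior nodes (roughly by a factor $(\delta_i/h_i)^{1-\alpha}$), so $B_{H_i}(x_i)$ need not sit inside the patch $\omega_i$ and may meet many elements lying far from $x_i$. One therefore has to verify, tracking the constants $C_\delta$, $C_b$ and the graded-grid constants, that every element meeting $\overline{B_{H_i}(x_i)}$ still has diameter $\lesssim h\,\delta_i^{(\mu-1)/\mu}$ and satisfies $\delta\eqsim\delta_i$ on it, so that Corollary~\ref{co:rho-holder} applied on $\{x:\delta(x)\gtrsim\delta_i\}$ together with \eqref{eq:approx} reproduces the interior bound \eqref{eq:in-approx} at these stencil points exactly as in its original derivation. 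Once this uniform control over the stencil neighbourhood is established, the remainder is the routine substitution carried out above.
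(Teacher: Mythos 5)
Your reduction to bounding the interpolation error at the stencil points $x_i\pm H_ie_j$ (equivalently, on $B_{H_i}(x_i)$) is exactly the paper's proof: the authors write $\mathcal{L}_h^S[\mathcal{I}_hu-u](x_i)=\kappa_{n,s,i}H_i^{-2s}\Delta_{\mathrm{FD}}(\mathcal{I}_hu-u)(x_i;H_i)$, bound it by $C\|u-\mathcal{I}_hu\|_{L^\infty(B_{H_i}(x_i))}H_i^{-2s}$, and invoke \eqref{eq:in-approx} together with Lemma~\ref{lm:uniform-delta} for \eqref{eq:consistency-Ih1}, and \eqref{eq:global-approx} for \eqref{eq:consistency-Ih2}. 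Your outline of the reduction and of \eqref{eq:consistency-Ih2} is therefore correct and matches the paper.

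The step you flag as the ``main obstacle'' is real, but the mechanism you sketch for it does not close. You argue that $y=x_i\pm H_ie_j$ must lie in $\Omega_h^{0,\delta}$ because otherwise Lemma~\ref{lm:delta-outside} would give $\delta(y)\le C_bh^\mu$ while Lemma~\ref{lm:uniform-delta} gives $\delta(y)\ge\tfrac12\delta_i$ and $\delta_i\ge C_\delta h_i\gtrsim h^\mu$. But those three facts merely yield $h^\mu\lesssim\delta_i\le 2C_bh^\mu$, i.e.\ $\delta_i\eqsim h^\mu$, not a contradiction. Indeed, tracking constants in the proof of Lemma~\ref{lm:delta-outside} one finds $C_b=(\lambda_0\lambda_2 C_\delta)^\mu\ge C_\delta^\mu$, while $\delta_i\ge C_\delta h_i$ only guarantees $\delta_i\gtrsim C_\delta^\mu h^\mu$; since $\lambda_0,\lambda_2\ge1$, the two thresholds are on the same side, so a node $x_i\in\mathcal{N}_h^{0,\delta}$ with $\delta_i\eqsim h^\mu$ can perfectly well have a stencil point in an element of $\mathcal{T}_h\setminus\mathcal{T}_h^{0,\delta}$. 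For the same reason your proposed verification ``every element meeting $\overline{B_{H_i}(x_i)}$ has diameter $\lesssim h\,\delta_i^{(\mu-1)/\mu}$ and $\delta\eqsim\delta_i$ on it'' cannot be established by constant-tracking alone.

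The gap is filled not by forcing $y$ into $\Omega_h^{0,\delta}$, but by observing that the borderline case is harmless. If $y\in B_{H_i}(x_i)$ sits in an element $T\notin\mathcal{T}_h^{0,\delta}$, then as above $\delta_i\eqsim h^\mu$, and \eqref{eq:out-approx} gives $|u(y)-\mathcal{I}_hu(y)|\le Ch^{\mu s}$. But $h^{\mu s}= h^{\tilde\beta}(h^\mu)^{s-\tilde\beta/\mu}\eqsim h^{\tilde\beta}\delta_i^{\,s-\tilde\beta/\mu}$ precisely because $\delta_i\eqsim h^\mu$, so the target bound in \eqref{eq:consistency-Ih1} still holds with a possibly larger constant. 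In the complementary case $T\in\mathcal{T}_h^{0,\delta}$ your argument (and the paper's) applies verbatim: \eqref{eq:in-approx} plus $\delta(y)\eqsim\delta_i$ from Lemma~\ref{lm:uniform-delta}. Once you add this two-case dichotomy the proof is complete and agrees with the paper.
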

\begin{proof}
Thanks to Lemma \ref{lm:uniform-delta} 
and \ref{eq:in-approx}, we have for any $x_i \in
\mathcal{N}_h^{0,\delta}$, 
	\[
	\begin{aligned}
  \left|\mathcal{L}_h^S [\mathcal{I}_hu-u](x_i)
  \right|&=\left|\kappa_{n,s,i}H_i^{-2s} 
  \Delta_{FD}(\mathcal{I}_hu-u)(x_i;H_i)\right| \\
  &\leq C\|u - \mathcal{I}_h u\|_{L^\infty(B_{H_i}(x_i))} H^{-2s}_i
    \leq C(\tilde{\beta})
    h^{\tilde{\beta}}\delta_i^{s-\frac{\tilde{\beta}}{\mu}} H_i^{-2s},
  \end{aligned}
	\]
  which gives \eqref{eq:consistency-Ih1}. Similarly, the global
  approximation result \eqref{eq:global-approx} leads to
  \eqref{eq:consistency-Ih2}.
\end{proof}

\subsection{Consistency of $\mathcal{L}_h^T$}
We discuss the consistency of the tail integral $\mathcal{L}_h^T$,
which is also considered in two cases: $\mathcal{N}_h^{0}$ and
$\mathcal{N}_h^{0,\delta}$. We first consider the former case.  

\begin{lemma}[global consistency of $\mathcal{L}_h^T$]
\label{lm:consistency-t2}
Let $\beta > 2s$ be such that neither $\tilde{\beta} - 2s$ nor $\tilde{\beta}$
  is an integer. Let $f \in L^\infty(\Omega)$ be such that
  $ \|f\|_{\tilde{\beta}-2s;\Omega}^{(s)} <
  \infty$.  On the graded grids, the solution of \eqref{eq:fL}
  satisfies
  \begin{equation} \label{eq:consistency-t2}
\left| 
  \int_{\Omega_i^c} \frac{u(y) - \mathcal{I}_hu(y)}{|x_i -y |^{n+2s}}
    \,\mathrm{d}y \right| \leq \max\{Ch^{\mu s}, C(\tilde{\beta})
    h^{\tilde{\beta}}\} H_i^{-2s}\quad \forall
  x_i \in \mathcal{N}_h^0,
\end{equation}
where the constant $C(\tilde{\beta})$ depends also on $\Omega$, $s$,
  $\|f\|_{L^\infty(\Omega)}$ and $\|f\|_{\tilde{\beta}
  -2s;\Omega}^{(s)}$.
\end{lemma}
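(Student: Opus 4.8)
The plan is to notice that the integrand vanishes off $\Omega$, then to decouple the (globally bounded) interpolation error from the kernel, whose integral over the complement of $\Omega_i$ already carries the desired $H_i^{-2s}$ weight. First I would record that both $u$ and $\mathcal{I}_h u$ vanish on $\Omega^c$ — the former by the boundary condition in \eqref{eq:fL}, the latter by the definition \eqref{eq:FEM-space} of $\mathbb{V}_h$ — so that
$$
\int_{\Omega_i^c} \frac{u(y) - \mathcal{I}_h u(y)}{|x_i - y|^{n+2s}}\,\mathrm{d}y = \int_{\Omega \setminus \Omega_i} \frac{u(y) - \mathcal{I}_h u(y)}{|x_i - y|^{n+2s}}\,\mathrm{d}y ,
$$
and then bound the right-hand side in absolute value by $\|u - \mathcal{I}_h u\|_{L^\infty(\Omega)} \int_{\Omega \setminus \Omega_i} |x_i - y|^{-n-2s}\,\mathrm{d}y$.

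Next I would estimate the kernel integral. The quasi-uniformity condition \eqref{eq:theta-uniformity} gives $B_{\underline{c}_S H_i}(x_i) \subset \Omega_i$, hence $\Omega \setminus \Omega_i \subset \mathbb{R}^n \setminus B_{\underline{c}_S H_i}(x_i)$, and a polar-coordinate computation yields
$$
\int_{\Omega \setminus \Omega_i} \frac{\mathrm{d}y}{|x_i - y|^{n+2s}} \leq \int_{\mathbb{R}^n \setminus B_{\underline{c}_S H_i}(x_i)} \frac{\mathrm{d}y}{|x_i - y|^{n+2s}} = \frac{|S^{n-1}|}{2s}\,(\underline{c}_S H_i)^{-2s} \leq C(n,s,\underline{c}_S)\,H_i^{-2s}.
$$
Combining this with the global interpolation estimate \eqref{eq:global-approx}, which holds for any $\mu \geq 1$ precisely under the hypothesis that neither $\tilde{\beta}-2s$ nor $\tilde{\beta}$ is an integer, I obtain
$$
\left| \int_{\Omega_i^c} \frac{u(y) - \mathcal{I}_h u(y)}{|x_i - y|^{n+2s}}\,\mathrm{d}y \right| \leq C\,H_i^{-2s}\,\|u - \mathcal{I}_h u\|_{L^\infty(\Omega)} \leq \max\{C h^{\mu s},\, C(\tilde{\beta}) h^{\tilde{\beta}}\}\,H_i^{-2s},
$$
which is exactly the asserted bound.

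In contrast with the forthcoming $\delta$-interior version, this global estimate needs no case distinction and no information about $\Omega_h^{0,\delta}$, so there is no real obstacle; the only points to get right are the ball inclusion $B_{\underline{c}_S H_i}(x_i)\subset\Omega_i$, which is what produces the correct $H_i^{-2s}$ scaling, and the observation that the graded-grid hypothesis enters only through \eqref{eq:global-approx}. The genuine work — using Lemma \ref{lm:delta-outside} and Lemma \ref{lm:in-out-distance} to replace $\|u-\mathcal{I}_h u\|_{L^\infty(\Omega)}$ by a $\delta$-localized bound that decays near $\partial\Omega$ — is deferred to the sharper estimate on $\mathcal{N}_h^{0,\delta}$.
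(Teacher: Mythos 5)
Your proof is correct and follows essentially the same argument as the paper: pull out $\|u-\mathcal{I}_h u\|_{L^\infty(\Omega)}$ (justified since both $u$ and $\mathcal{I}_h u$ vanish on $\Omega^c$), bound the remaining kernel integral over $\Omega_i^c$ by $C H_i^{-2s}$ using the quasi-uniformity \eqref{eq:theta-uniformity}, and invoke the global approximation estimate \eqref{eq:global-approx}. The paper computes the kernel integral in polar coordinates using $\rho_i(\theta)\geq\underline{c}_S H_i$ directly, while you use the equivalent ball inclusion $B_{\underline{c}_S H_i}(x_i)\subset\Omega_i$; these are the same bound.
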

\begin{proof}
Using the global approximation result
  \eqref{eq:global-approx}, we obtain for all $x_i \in
  \mathcal{N}_h^0$,
	\[
	\begin{aligned}
  \left| \int_{\Omega_i^c} \frac{u(y) - \mathcal{I}_hu(y)}{|x_i -y |^{n+2s}}
  \,\mathrm{d}y \right|
  & \leq \max\{Ch^{\mu s}, C(\tilde{\beta}) h^{\tilde{\beta}}\}
    \int_{\Omega_i^c} \frac{1}{|x_i -y|^{n+2s}} \mathrm{d}y\\
    &\leq \max\{Ch^{\mu s}, C(\tilde{\beta}) h^{\tilde{\beta}}\}
    \commentone{\int_{S^{n-1}} \int_{\rho_i(\theta)}^{+\infty}} \frac{1}{\rho^{1+2s}}
    \mathrm{d}\rho \commentone{\; \mathrm{d} S_\theta} \\
    & \leq \max\{Ch^{\mu s}, C(\tilde{\beta}) h^{\tilde{\beta}}\} H_i^{-2s},
	\end{aligned}
	\]
  where the quasi-uniformity on
  $\rho_i(\theta)$ \eqref{eq:theta-uniformity} is used in the last step.
\end{proof}

The consistency on the $\delta$-interior nodes would be more delicate.
Let $x_i \in \mathcal{N}_h^{0,\delta}$.  Since $u - \mathcal{I}_h u$
vanishes on $\Omega^c$, we confine the domain of integration of
$\mathcal{L}_h^T$ in \eqref{eq:consistency} as $\Omega_i^c \cap
\Omega$, which can be partitioned into three subdomains (see Figure
\ref{fg:domain}) 
\begin{equation} \label{eq:r123-uniform}
\begin{aligned}
  R_{1,i} &: = \Omega_i^c \cap (\Omega \setminus \Omega_h^{0,\delta}),\\
  R_{2,i} &:= \Omega_i^c \cap \left(\Omega_h^{0,\delta} \cap
  B_{(1+C_d)^{-1}\delta_i}(x_i) \right),\\
  R_{3,i} &:= \Omega_i^c \cap \left(\Omega_h^{0,\delta} \setminus
  B_{(1+C_d)^{-1}\delta_i}(x_i) \right).
\end{aligned}
\end{equation}

\begin{figure}[!htbp]
\centering 
\captionsetup{justification=centering}
\begin{minipage}[b]{0.45\textwidth}
\subfloat[subdomains]{
  \includegraphics[width=0.9\textwidth]{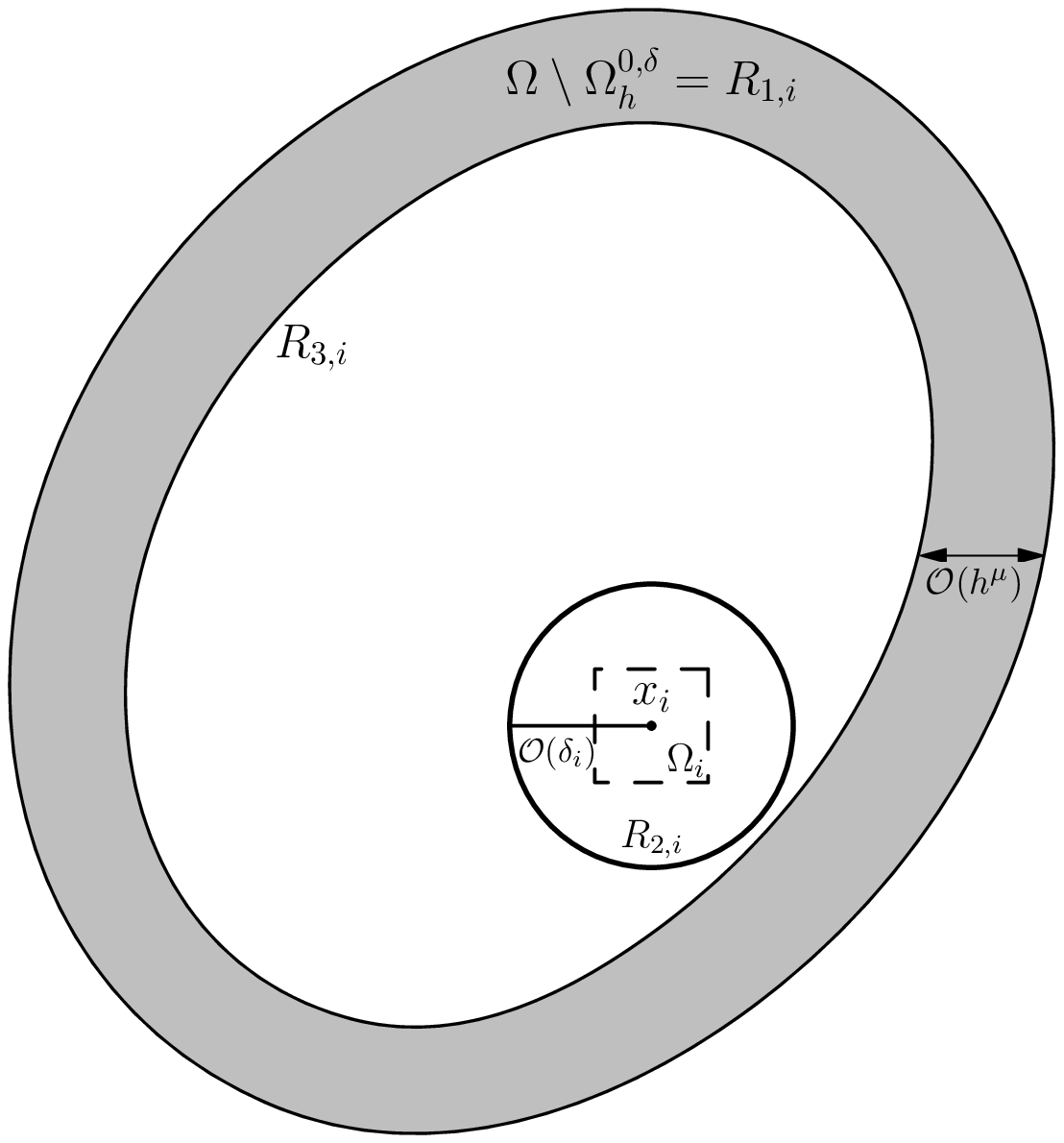}
  \label{fg:domain}
}
\end{minipage}
\begin{minipage}[b]{0.5\textwidth}
  \subfloat[$R_{1,i}$ when $\Omega = \mathbb{R}_+^n$]{
  \includegraphics[width=0.9\textwidth]{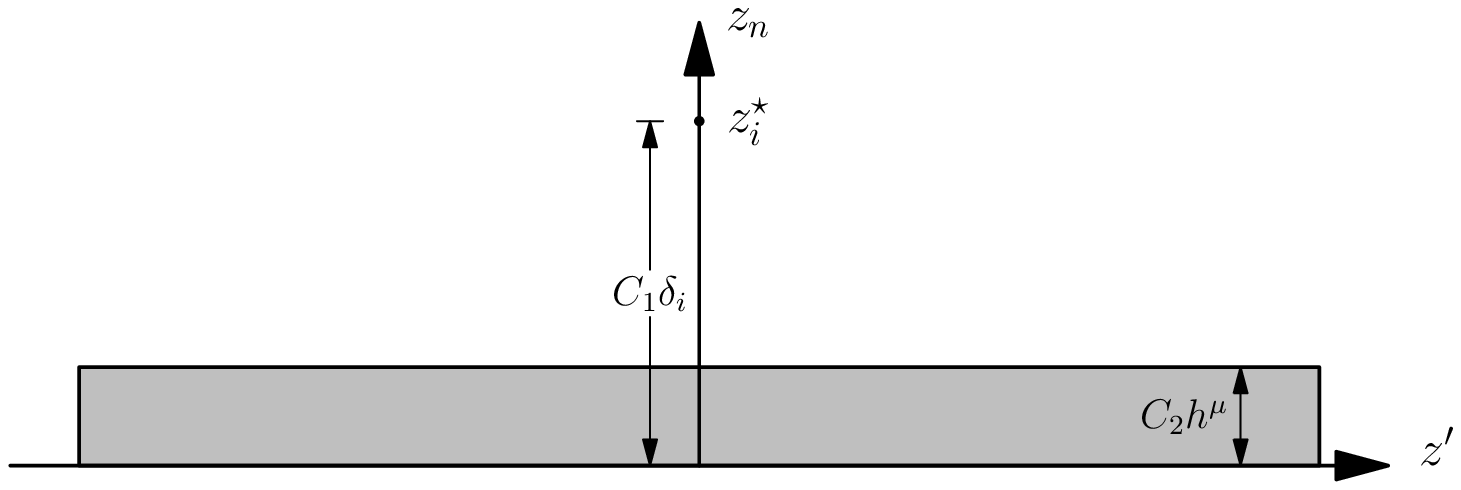}
  \label{fg:R1}
  }

\subfloat[$R_{3,i}$ when $\Omega = \mathbb{R}_+^n$]{
  \includegraphics[width=0.9\textwidth]{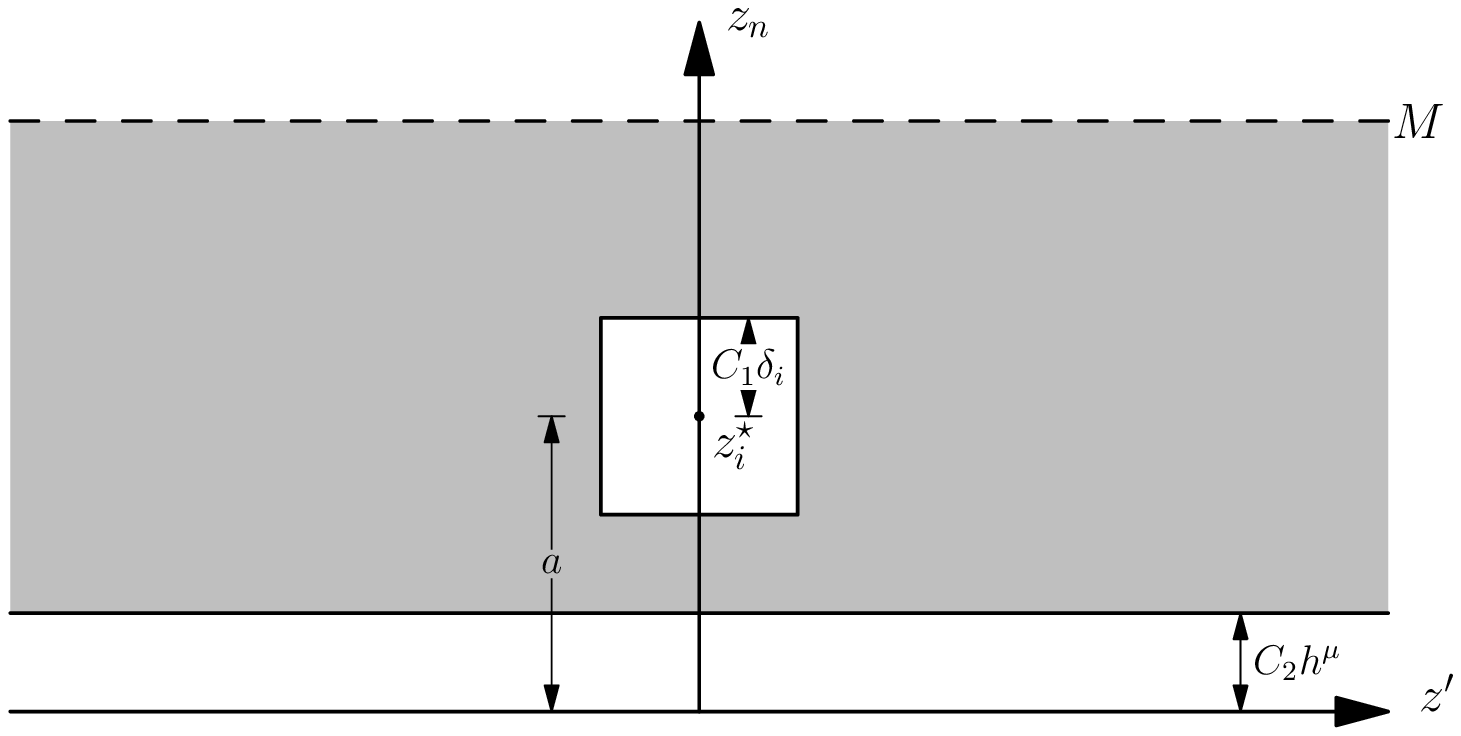}
  \label{fg:R3}
}
\end{minipage}
\caption{Three subdomains of $\Omega_i^c \cap \Omega$. The proofs of Lemma \ref{lm:r1} and Lemma \ref{lm:r3} use (b) and (c), respectively. }
\label{fg:Lh}
\end{figure}

\begin{lemma}[consistency of $\mathcal{L}_h^T$ on $R_{1,i}$] 
\label{lm:r1}
On the graded grids \eqref{eq:graded-h} \commentone{with any $\mu \geq 1$}, there exists $h_0>0$ such that when $h < h_0$,
the solution of \eqref{eq:fL} satisfies
\begin{equation} \label{eq:r1}
  \left| \int_{R_{1,i}} \frac{u(y) - \mathcal{I}_hu(y)}{|x_i -y
  |^{n+2s}} \,\mathrm{d}y \right| \leq C h^{\mu(s+1)} \delta_i^{-1-2s}
  \quad \forall x_i \in \mathcal{N}_h^{0,\delta},
\end{equation}
where the constant depends on $\Omega$, $s$, $\|f\|_{L^\infty(\Omega)}$.
\end{lemma}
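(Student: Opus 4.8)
The plan is to exploit the fact that $R_{1,i}$ lies entirely in the thin boundary layer $\Omega \setminus \Omega_h^{0,\delta}$, so that both the interpolation error and the volume of $R_{1,i}$ are controlled by powers of $h^\mu$, while the kernel is bounded away from its singularity because of the inside-outside distance estimate. First I would bound the numerator: for $y \in R_{1,i} \subset \Omega \setminus \Omega_h^{0,\delta}$, Lemma~\ref{lm:delta-outside} gives $\delta(y) \leq C_b h^\mu$, and since $y$ lies in some $T \in \mathcal{T}_h \setminus \mathcal{T}_h^{0,\delta}$ with $h_T \leq C h^\mu$ (again from Lemma~\ref{lm:delta-outside}), the global $C^s$-regularity of $u$ (Proposition~\ref{pp:global-holder}) combined with the interpolation estimate \eqref{eq:out-approx} yields $|u(y) - \mathcal{I}_h u(y)| \leq C h^{\mu s}$. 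Next I would bound the kernel from below: by Lemma~\ref{lm:in-out-distance}, $|x_i - y| \geq (1+C_d)^{-1}\delta_i$ for all $y \in R_{1,i}$, hence $|x_i - y|^{-(n+2s)} \leq C \delta_i^{-(n+2s)}$.

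The remaining ingredient is an estimate on the measure of $R_{1,i}$. Since $R_{1,i} \subset \Omega \setminus \Omega_h^{0,\delta}$ and every point of this set is within distance $C_b h^\mu$ of $\partial\Omega$ by \eqref{eq:delta-outside2}, $R_{1,i}$ is contained in a tubular neighborhood of $\partial\Omega$ of width $O(h^\mu)$. Because $\Omega$ is a bounded Lipschitz domain (indeed a polytope), $\partial\Omega$ has finite $(n-1)$-dimensional Hausdorff measure, so the Lebesgue measure of such a tube is $\leq C h^\mu$ for $h$ small enough; this is where the hypothesis $h < h_0$ enters, ensuring the tube has the expected volume scaling and does not wrap around the whole domain. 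Putting the three pieces together,
\[
  \left| \int_{R_{1,i}} \frac{u(y) - \mathcal{I}_h u(y)}{|x_i - y|^{n+2s}} \,\mathrm{d}y \right|
  \leq C h^{\mu s} \cdot C \delta_i^{-(n+2s)} \cdot C h^\mu
  = C h^{\mu(s+1)} \delta_i^{-(n+2s)}.
\]

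However, the target bound \eqref{eq:r1} has $\delta_i^{-1-2s}$, not $\delta_i^{-n-2s}$, so the crude "measure times sup" argument loses too many powers of $\delta_i$ when $n \geq 2$. To recover the sharp exponent I would instead slice $R_{1,i}$ by distance to $x_i$: writing $r = |x_i - y|$, the relevant range is $r \in [(1+C_d)^{-1}\delta_i, \operatorname{diam}(\Omega)]$, and for each such $r$ the intersection of the sphere $\partial B_r(x_i)$ with the $O(h^\mu)$-tube around $\partial\Omega$ has $(n-1)$-measure at most $C \min\{r^{n-1}, r^{n-2} h^\mu\}$ — more precisely, using the Lipschitz graph structure from Definition~\ref{df:Lipschitz}, the portion of $\partial B_r(x_i)$ lying within $C_b h^\mu$ of $\partial\Omega$ has measure $\lesssim r^{n-2} h^\mu$ once $r \gtrsim \delta_i \gg h^\mu$. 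Then
\[
  \int_{R_{1,i}} \frac{|u - \mathcal{I}_h u|}{|x_i - y|^{n+2s}}\,\mathrm{d}y
  \leq C h^{\mu s} \int_{(1+C_d)^{-1}\delta_i}^{\operatorname{diam}(\Omega)} \frac{r^{n-2} h^\mu}{r^{n+2s}}\,\mathrm{d}r
  \leq C h^{\mu(s+1)} \int_{c\delta_i}^{\infty} r^{-2-2s}\,\mathrm{d}r
  = C h^{\mu(s+1)} \delta_i^{-1-2s},
\]
which is exactly \eqref{eq:r1}. The main obstacle is precisely this geometric surface-measure estimate: one must verify, using only the Lipschitz (polytopal) structure of $\partial\Omega$ and the fact that $x_i$ is at distance $\delta_i$ from the boundary with $\delta_i$ potentially comparable to $h^\mu$ up to the constant $1+C_d$, that the spherical shells $\partial B_r(x_i)$ meet the boundary tube in a set of $(n-1)$-measure $O(r^{n-2} h^\mu)$ uniformly in $r$; the threshold $h < h_0$ and the constant $(1+C_d)^{-1}$ in the definition of $R_{1,i}$ are exactly what make this uniform estimate go through. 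The rest is the elementary radial integration shown above.
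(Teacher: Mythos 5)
Your overall strategy is sound and genuinely different from the paper's. Both arguments start identically: bound $|u-\mathcal{I}_hu|\leq Ch^{\mu s}$ on $R_{1,i}$ via \eqref{eq:out-approx}, then reduce everything to showing $\int_{R_{1,i}}|x_i-y|^{-n-2s}\,\mathrm{d}y\leq Ch^\mu\delta_i^{-1-2s}$. You attack this integral by the co-area formula, slicing by spheres $\partial B_r(x_i)$; the paper instead passes to local Lipschitz graph coordinates, slices by the normal coordinate $z_n\in[0,C_2h^\mu]$, and splits each slice into $|z'|>\delta_i$ (kernel bounded by $|z'|^{-n-2s}$) and $|z'|\leq\delta_i$ (kernel bounded by $|C_1\delta_i-z_n|^{-n-2s}$), which closes with an elementary one-dimensional computation. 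The Cartesian slicing is the easier route precisely because it never has to quantify how a sphere meets the boundary tube.

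That question is where your argument has a genuine flaw. Your pivotal claim --- that $\mathcal{H}^{n-1}\bigl(\partial B_r(x_i)\cap\{\delta(y)\leq C_bh^\mu\}\bigr)\lesssim r^{n-2}h^\mu$ uniformly for $r\gtrsim\delta_i$ --- fails for $n=2$ in the near-tangential range. In the half-space model, when $r\approx\delta_i$ the intersection is a spherical cap of height $\approx h^\mu$, whose measure is $\approx(\delta_i h^\mu)^{(n-1)/2}$; for $n=2$ this is $\sqrt{\delta_i h^\mu}$, which exceeds your claimed bound $h^\mu$ by the factor $(\delta_i/h^\mu)^{1/2}$, and this factor is not harmless since $\delta_i$ can be much larger than $h^\mu$ for most $\delta$-interior nodes. (The claim is fine for $n\geq3$, and for $n=1$ your crude ``measure times sup'' argument already gives the result since $n+2s=1+2s$.) The correct per-radius bound for $r>\delta_i$ is $\lesssim rh^\mu/\sqrt{r^2-\delta_i^2}$, and the lemma is still rescued because
\[
h^\mu\int_{\delta_i}^{M}\frac{1}{r^{1+2s}\sqrt{r^2-\delta_i^2}}\,\mathrm{d}r
= h^\mu\,\delta_i^{-1-2s}\int_{1}^{M/\delta_i}\frac{\mathrm{d}u}{u^{1+2s}\sqrt{u^2-1}}
\leq C h^\mu\,\delta_i^{-1-2s},
\]
the $(u-1)^{-1/2}$ singularity being integrable. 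So your final estimate is correct, but the radial integration as you wrote it does not follow from a true surface-measure bound; you must either insert the tangential correction above or switch to the paper's normal-coordinate slicing. You should also carry out explicitly the reduction from a general Lipschitz (polytopal) boundary to the half-space model --- the paper does this by a partition of unity and bi-Lipschitz change of coordinates, using Lemma \ref{lm:in-out-distance} to keep $|x_i-y|\gtrsim|\tilde z_i^\star-z|$ under the coordinate change --- rather than leaving it as a remark.
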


\begin{proof}
We apply the approximation result outside $\Omega_h^{0,\delta}$
  \eqref{eq:out-approx} to obtain
$$ 
\begin{aligned}
  \left| \int_{R_{1,i}} \frac{u(y) - \mathcal{I}_hu(y)}{|x_i -y |^{n+2s}}
  \,\mathrm{d}y \right| \leq Ch^{\mu s} \int_{R_{1,i}} \frac{1}{|x_i -
  y|^{n+2s}} \,\mathrm{d}y.
\end{aligned}
$$
Then, the desired result \eqref{eq:r1} can be obtained by
showing  
\begin{equation} \label{eq:r1-estimate}
  \int_{R_{1,i}} \frac{1}{|x_i - y|^{n+2s}} \,\mathrm{d}y \leq C
  h^{\mu} \delta_i^{-1-2s}.
\end{equation}
  Let us consider first the case when $\Omega = \mathbb{R}^n_+ :=
  \{(z', z_n): z' \in \mathbb{R}^{n-1}, z_n > 0\}$.  Invoking Lemma
  \ref{lm:delta-outside} ($\delta$ outside
  $\Omega_h^{0,\delta}$), we consider $z_{i}^\star := (0,\cdots,0,
  C_1\delta_i)$ and $R_{1,i} \subset  \{(z', z_n) : 0 \leq z_n \leq
  C_2h^{\mu} \}$ (see Figure \ref{fg:R1}). We also have $C_1\delta_i -
  C_2h^{\mu} \geq C_3\delta_i$ in light of Lemma
  \ref{lm:in-out-distance} (inside-outside distance).  Then, the
  integral in \eqref{eq:r1-estimate} has the estimate
  $$ 
  \begin{aligned}
    & \int_0^{C_2h^\mu} \,\mathrm{d}z_n \int_{\mathbb{R}^{n-1}}
    \frac{1}{|z_{i}^\star - z|^{n+2s}}\,\mathrm{d}z' \\
    \leq & \int_0^{C_2h^\mu}\,\mathrm{d}z_n \int_{|z'| > \delta_i}
    \frac{1}{|z'|^{n+2s}}\,\mathrm{d}z' + \int_0^{C_2h^\mu}
    \frac{1}{|C_1\delta_i - z_n|^{n+2s}} \,\mathrm{d}z_n
    \int_{|z'|\leq \delta_i} \,\mathrm{d}z' \\
    \leq &\,Ch^\mu \delta_i^{-1-2s} + C[(C_1\delta_i - C_2h^\mu)^{1-n-2s} -
    (C_1\delta_i)^{1-n-2s}]\delta_i^{n-1} \leq Ch^\mu \delta_i^{-1-2s},
  \end{aligned}
  $$ 
  where we use $(1 - \frac{C_2h^\mu}{C_1\delta_i})^{1-n-2s} - 1 \lesssim
  \frac{h^\mu}{\delta_i}$ in the last step. 

  We conclude by extending this result to a general bounded Lipschitz
  domain $\Omega$. We use the notation as in Definition
  \ref{df:Lipschitz} and confine the integral domain to $V\cap
  R_{1,i}$, the general case is then applied by a standard partition
  of unity. Define a point $\tilde{z}_{i}^\star :=
  (0,\cdots,0,\tilde{C}_1\delta_i)$ in the $z$-coordinate. Thanks
  again to Lemma \ref{lm:in-out-distance}, for any $y \in V \cap
  R_{1,i}$ (denoted by $z$ for change of coordinates), $|x_i - y| \geq
  (1+C_d)^{-1}\delta_i \gtrsim |\tilde{z}_{i}^\star - z|$.
  Therefore, by bi-Lipschitz changes of coordinates on $z$, we have  
  $$ 
  \int_{V\cap R_{1,i}} \frac{1}{|x_i - y|^{n+2s}} \,\mathrm{d}y \leq C
  \int_{V\cap R_{1,i}} \frac{1}{|\tilde{z}_{i}^\star - z|^{n+2s}}
  \mathrm{d}z \leq Ch^\mu \delta_{i}^{-1-2s}.
  $$ 
Here, the constant $C$ depends on the Lipschitz constant of $\partial
\Omega$, which is uniformly bounded due to the partition of unity.
This proves \eqref{eq:r1-estimate} and thus \eqref{eq:r1}. 
\end{proof}

\begin{lemma}[consistency of $\mathcal{L}_h^T$ on $R_{2,i}$] \label{lm:r2}
Let $\beta > 2s$ be such that neither $\tilde{\beta} - 2s$ nor $\tilde{\beta}$
  is an integer. Let $f \in L^\infty(\Omega)$ be such that
  $\|f\|_{\tilde{\beta}-2s;\Omega}^{(s)} <
  \infty$.  On the graded grids \eqref{eq:graded-h} \commentone{with any $\mu \geq 1$}, the solution of \eqref{eq:fL}
  satisfies 
\begin{equation} \label{eq:r2}
  \left| \int_{R_{2,i}} \frac{u(y) - \mathcal{I}_hu(y)}{|x_i -y
  |^{n+2s}} \,\mathrm{d}y \right| \leq C(\tilde{\beta})
  h^{\tilde{\beta}} \delta_i^{s - \frac{\tilde{\beta}}{\mu}} H_i^{-2s}
  \quad \forall x_i \in \mathcal{N}_h^{0,\delta},
\end{equation}
where the constant $C(\tilde{\beta})$ depends also on $\Omega$, $s$,
$\|f\|_{L^\infty(\Omega)}$ and $\|f\|_{\tilde{\beta}
-2s;\Omega}^{(s)}$.
\end{lemma}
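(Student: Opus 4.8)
The plan is to estimate the integral over $R_{2,i}$ by combining the pointwise interpolation bound valid in the $\delta$-interior region with the integrability of the kernel over $R_{2,i}$. First I would recall that, by definition, $R_{2,i} = \Omega_i^c \cap \bigl(\Omega_h^{0,\delta} \cap B_{(1+C_d)^{-1}\delta_i}(x_i)\bigr)$, so every point $y \in R_{2,i}$ lies in $\Omega_h^{0,\delta}$, and hence estimate \eqref{eq:in-approx} applies: $|u(y) - \mathcal{I}_h u(y)| \leq C(\tilde{\beta}) h^{\tilde{\beta}} \delta(y)^{s - \tilde{\beta}/\mu}$. Moreover, since $y \in B_{(1+C_d)^{-1}\delta_i}(x_i)$ and $x_i \in \mathcal{N}_h^{0,\delta}$, the triangle inequality gives $\delta(y) \geq \delta_i - |x_i - y| \geq \delta_i - (1+C_d)^{-1}\delta_i \geq C\delta_i$; equivalently $\delta(y) \eqsim \delta_i$ on $R_{2,i}$, so one can replace $\delta(y)$ by $\delta_i$ in the exponent (noting the sign of $s - \tilde{\beta}/\mu$ only affects the constant, not the rate). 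This yields $|u(y) - \mathcal{I}_h u(y)| \leq C(\tilde{\beta}) h^{\tilde{\beta}} \delta_i^{s - \tilde{\beta}/\mu}$ uniformly on $R_{2,i}$.

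Next I would pull this uniform bound out of the integral and estimate
\[
\left| \int_{R_{2,i}} \frac{u(y) - \mathcal{I}_h u(y)}{|x_i - y|^{n+2s}} \,\mathrm{d}y \right| \leq C(\tilde{\beta}) h^{\tilde{\beta}} \delta_i^{s - \frac{\tilde{\beta}}{\mu}} \int_{R_{2,i}} \frac{1}{|x_i - y|^{n+2s}} \,\mathrm{d}y.
\]
It then remains to show $\int_{R_{2,i}} |x_i - y|^{-(n+2s)} \,\mathrm{d}y \leq C H_i^{-2s}$. Since $R_{2,i} \subset \Omega_i^c$, i.e., $|x_i - y| \geq \rho_i(\theta) \geq \underline{c}_S H_i$ for $y = x_i + \rho\theta$ in the complement of $\Omega_i$, switching to polar coordinates gives
\[
\int_{R_{2,i}} \frac{1}{|x_i - y|^{n+2s}} \,\mathrm{d}y \leq \int_{S^{n-1}} \int_{\rho_i(\theta)}^{\infty} \frac{1}{\rho^{1+2s}} \,\mathrm{d}\rho \,\mathrm{d}S_\theta = \frac{1}{2s} \int_{S^{n-1}} \rho_i(\theta)^{-2s} \,\mathrm{d}S_\theta \leq \frac{|S^{n-1}|}{2s \,\underline{c}_S^{2s}} H_i^{-2s},
\]
using the quasi-uniformity \eqref{eq:theta-uniformity}. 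This is exactly the same kernel-tail computation already used in the proof of Lemma \ref{lm:consistency-t2}. Combining the two displays gives \eqref{eq:r2}.

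The bulk of the argument is routine; the one point that requires a little care is establishing $\delta(y) \eqsim \delta_i$ on $R_{2,i}$, which is where the radius $(1+C_d)^{-1}\delta_i$ in the definition of $R_{2,i}$ is used, together with the constant $C_d$ from Lemma \ref{lm:in-out-distance}. I expect no real obstacle here since the containment $R_{2,i} \subset B_{(1+C_d)^{-1}\delta_i}(x_i)$ makes the lower bound on $\delta(y)$ immediate; the upper bound $\delta(y) \leq \delta_i + |x_i - y| \lesssim \delta_i$ is equally direct. One should also take a moment to note that this estimate holds for \emph{all} $\mu \geq 1$ (including the quasi-uniform case $\mu = 1$), since neither the interpolation bound \eqref{eq:in-approx} nor the kernel-tail bound imposes any restriction on $\mu$, and that the dependence of the constant is precisely as claimed, inherited from \eqref{eq:in-approx}. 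If $s - \tilde{\beta}/\mu < 0$, one may alternatively invoke the $\tilde{\beta}_1 \in (\mu s, \tilde\beta]$ variant mentioned after \eqref{eq:in-approx} to keep the exponent of $\delta_i$ nonnegative, but since the statement \eqref{eq:r2} is written with $\tilde{\beta}$ itself, the direct argument above suffices.
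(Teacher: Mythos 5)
Your proof is correct and follows essentially the same route as the paper: apply the interior interpolation bound \eqref{eq:in-approx}, control $\delta(y)$ in terms of $\delta_i$ on $R_{2,i}$, and bound the remaining kernel integral by $\lesssim H_i^{-2s}$ via \eqref{eq:theta-uniformity}. Your observation that $\delta(y)\eqsim\delta_i$ on $R_{2,i}$ (two-sided, using the ball radius $(1+C_d)^{-1}\delta_i$) lets you pull the factor $\delta_i^{s-\tilde{\beta}/\mu}$ out uniformly regardless of the sign of the exponent, which cleanly merges the paper's two cases $\mu\leq\tilde{\beta}/s$ and $\mu>\tilde{\beta}/s$ into one line; the paper only states the upper bound $\delta(y)\lesssim\delta_i$ explicitly and, in its Case 1, carries the factor $(\delta_i-\rho)^{-(\tilde{\beta}/\mu-s)}$ inside the integral before discharging it at the end.
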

\begin{proof}
Note that all $y \in R_{2,i}$, the distance satisfies 
$$
\commentone{ \delta_i - |x_i - y| \leq \delta(y) \leq \delta_i + |x_i - y| \leq C\delta_i.}
$$ 

\underline{Case 1: $\mu \leq \tilde{\beta}/s$.} Using \eqref{eq:in-approx}, the
multi-dimensional polar coordinate, and the quasi-uniformity of $\Omega_i$
\eqref{eq:theta-uniformity}, we have
\[
\begin{aligned}
   &\quad \left| \int_{R_{2,i}} \frac{u(y) - \mathcal{I}_hu(y)}{\left|x_i -y
    \right|^{n+2s} }\,\mathrm{d}y\right| 
	  \leq C(\tilde{\beta}) h^{\tilde{\beta}} \int_{R_{2,i}}
    \frac{1}{\delta(y)^{ \frac{\tilde{\beta}}{\mu} - s} \left|x_i -y
    \right|^{n+2s} }\,\mathrm{d}y	\\		
    & \leq C(\tilde{\beta})  h^{\tilde{\beta}} \int_{R_{2,i}}
    \frac{1}{(\delta_i - |x_i - y|)^{\frac{\tilde{\beta}}{\mu}  - s}
    \left|x_i -y  \right|^{n+2s} }\,\mathrm{d}y\\
    &= C(\tilde{\beta})  h^{\tilde{\beta}}\int_{S^{n-1}}
    \int_{\min\{\rho_i(\theta),
    (1+C_d)^{-1}\delta_i\}}^{(1+C_d)^{-1}\delta_i}
    \frac{1}{\left(\delta_i - \rho
    \right)^{\frac{\tilde{\beta}}{\mu}  - s}\rho^{1+2s}}\,\mathrm{d}\rho
    \mathrm{d}S_{\theta}\\
    &\leq C(\tilde{\beta})  h^{\tilde{\beta}}
    \delta_i^{-s-\frac{\tilde{\beta}}{\mu}} \int_{S^{n-1}}
    \int_{\min\{\rho_i(\theta)/\delta_i,
    (1+C_d)^{-1}\}}^{(1+C_d)^{-1}}
    \frac{1}{(1-t)^{\frac{\tilde{\beta}}{\mu}  -
    s}t^{1+2s}}\,\mathrm{d}t \mathrm{d}S_\theta \\
  & \leq C(\tilde{\beta})  h^{\tilde{\beta}} \delta_i^{s -
  \frac{\tilde{\beta}}{\mu}} H_i^{-2s},
	\end{aligned}
\]
where the last step uses the fact that $H_i\leq \delta_i$. 

\commentone{
\underline{Case 2: $\mu > \tilde{\beta}/s$.} In this case, we use the $\delta(y) \leq C\delta_i$ to obtain
\[
\begin{aligned}
   &\quad \left| \int_{R_{2,i}} \frac{u(y) - \mathcal{I}_hu(y)}{\left|x_i -y
    \right|^{n+2s} }\,\mathrm{d}y\right| 
	  \leq C(\tilde{\beta}) h^{\tilde{\beta}} \int_{R_{2,i}}
    \frac{1}{\delta(y)^{ \frac{\tilde{\beta}}{\mu} - s} \left|x_i -y
    \right|^{n+2s} }\,\mathrm{d}y	\\		
    & \leq C(\tilde{\beta})  h^{\tilde{\beta}}  \delta_i^{s - \frac{\tilde{\beta}}{\mu}} \int_{R_{2,i}}
    \frac{1}{
    \left|x_i -y  \right|^{n+2s} }\,\mathrm{d}y
    \leq C(\tilde{\beta})  h^{\tilde{\beta}} \delta_i^{s -
  \frac{\tilde{\beta}}{\mu}} H_i^{-2s},
	\end{aligned}
\]
}
This completes the proof.
\end{proof}

\begin{lemma}[consistency of $\mathcal{L}_h^T$ on $R_{3,i}$] 
\label{lm:r3}
Let $\beta > 2s$ be such that neither $\tilde{\beta} - 2s$ nor $\tilde{\beta}$
  is an integer. Let $f \in L^\infty(\Omega)$ be such that
  $ \|f\|_{\tilde{\beta}-2s;\Omega}^{(s)} <
  \infty$.  On the graded grids \eqref{eq:graded-h} \commentone{with any $\mu \geq 1$}, there exists $h_0>0$ such that when
  $h < h_0$, the solution of \eqref{eq:fL} satisfies, $\forall x_i \in
  \mathcal{N}_h^{0,\delta}$,
\begin{equation} \label{eq:r3}
  \left| \int_{R_{3,i}} \frac{u(y) - \mathcal{I}_hu(y)}{|x_i -y
  |^{n+2s}} \,\mathrm{d}y \right| \leq  
  \left\{
    \begin{array}{ll} 
      C(\tilde{\beta}) h^{\mu(s+1)} \delta_i^{-1-2s} &\mbox{if
      }\tilde{\beta} > \mu(1+s),\\
      C(\tilde{\beta}) h^{\mu(s+1)} |\log h| \delta_i^{-1-2s}
      &\mbox{if }\tilde{\beta} = \mu(1+s),\\
      C(\tilde{\beta}) h^{\tilde{\beta}}
      \delta_i^{-\frac{\tilde{\beta}}{\mu} - s} &\mbox{if } \mu(1+s) >
      \tilde{\beta} > \mu s, \\
      C(\tilde{\beta}) h^{\tilde{\beta}} \delta_i^{-2s} &\mbox{if }
      \mu s \geq \tilde{\beta},
  \end{array}
  \right.
\end{equation}
where the constant $C(\tilde{\beta})$ depends also on $\Omega$, $s$,
$\|f\|_{L^\infty(\Omega)}$ and $\|f\|_{\tilde{\beta}
-2s;\Omega}^{(s)}$. Moreover, the constant $C(\tilde{\beta})$ behaves
  as $|\mu(1+s) - \tilde{\beta}|^{-1}$ when $\tilde{\beta} \to
  \mu(1+s)^-$.
\end{lemma}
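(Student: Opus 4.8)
The plan is to combine the interior interpolation estimate \eqref{eq:in-approx} with a direct computation of a weighted Riesz-type integral over the far region $R_{3,i}$, reduced to a model half-space. Since $R_{3,i}\subset\Omega_h^{0,\delta}$, \eqref{eq:in-approx} gives $|u(y)-\mathcal{I}_hu(y)|\le C(\tilde{\beta})h^{\tilde{\beta}}\delta(y)^{s-\tilde{\beta}/\mu}$ on $R_{3,i}$, so it suffices to bound $h^{\tilde{\beta}}\int_{R_{3,i}}\delta(y)^{s-\tilde{\beta}/\mu}|x_i-y|^{-n-2s}\,\mathrm{d}y$. I would record two geometric facts used repeatedly: by \eqref{eq:r123-uniform}, $|x_i-y|\ge(1+C_d)^{-1}\delta_i$ on $R_{3,i}$; and, combining Lemma \ref{lm:uniform-delta}, the membership condition \eqref{eq:interior-node}, and \eqref{eq:graded-lam0}, one has $\delta(y)\gtrsim h^{\mu}$ for all $y\in\Omega_h^{0,\delta}$ (and likewise $\delta_i\gtrsim h^\mu$). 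The second fact is precisely what makes the integral finite when $s-\tilde{\beta}/\mu<-1$.

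Next, following the proof of Lemma \ref{lm:r1}, I would first treat $\Omega=\mathbb{R}^n_+$ and then pass to a general bounded Lipschitz domain via Definition \ref{df:Lipschitz}, a partition of unity, and a bi-Lipschitz flattening near the boundary projection of $x_i$, under which $\delta(\cdot)$ is comparable to the height coordinate and $|x_i-y|$ is bounded below by the flattened distance. The portion of $R_{3,i}$ outside the relevant coordinate patch, and the situation in which $\delta_i$ is bounded below by a fixed constant (so that $x_i$ is not near $\partial\Omega$), are handled by crude bounds: $\delta(y)^{s-\tilde{\beta}/\mu}\lesssim1$ when $s\ge\tilde{\beta}/\mu$, or $\delta(y)^{s-\tilde{\beta}/\mu}\lesssim h^{\mu s-\tilde{\beta}}$ together with a boundary-layer volume estimate when $s<\tilde{\beta}/\mu$, combined with $\int_{|x_i-y|\ge(1+C_d)^{-1}\delta_i}|x_i-y|^{-n-2s}\,\mathrm{d}y\lesssim\delta_i^{-2s}$; a short algebraic check using $\delta_i\le\mathrm{diam}(\Omega)$ and $h^{\tilde{\beta}}\le h^{\mu(1+s)}$ (valid for $\tilde{\beta}\ge\mu(1+s)$ and $h<h_0$) shows these pieces are dominated by the right-hand side of \eqref{eq:r3}.

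In the model case, with $x_i=\delta_ie_n$ and $y=(z',z_n)$, performing the inner integral in $z'$ over $\{|z'|^2+(z_n-\delta_i)^2\ge(1+C_d)^{-2}\delta_i^2\}$ --- splitting at $|z'|=\delta_i$ and using that the quadratic form is $\ge(1+C_d)^{-2}\delta_i^2$ there --- yields a bound $\lesssim\max\{\delta_i,z_n\}^{-1-2s}$. It then remains to estimate $\int_{c_\star h^\mu}^{\mathrm{diam}(\Omega)}z_n^{s-\tilde{\beta}/\mu}\max\{\delta_i,z_n\}^{-1-2s}\,\mathrm{d}z_n$; splitting at $z_n=\delta_i$, in each of the four regimes ($\tilde{\beta}>\mu(1+s)$, $\tilde{\beta}=\mu(1+s)$, $\mu s<\tilde{\beta}<\mu(1+s)$, $\tilde{\beta}\le\mu s$) one checks whether the lower endpoint $c_\star h^\mu$, the split point $\delta_i$, or a logarithm controls the integral; multiplying by $h^{\tilde{\beta}}$, absorbing the $z_n>\delta_i$ tail and (in the first two regimes) a subdominant $h^{\tilde{\beta}}\delta_i^{-s-\tilde{\beta}/\mu}$ term via $\delta_i\gtrsim h^\mu$, and (in the last regime) replacing $\delta_i^{-s-\tilde{\beta}/\mu}$ by $\delta_i^{-2s}$ via $\delta_i\le\mathrm{diam}(\Omega)$ and $s\ge\tilde{\beta}/\mu$, gives \eqref{eq:r3}. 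The stated blow-up $C(\tilde{\beta})\sim|\mu(1+s)-\tilde{\beta}|^{-1}$ as $\tilde{\beta}\to\mu(1+s)^-$ originates from the factor $(1+s-\tilde{\beta}/\mu)^{-1}$ produced when evaluating $\int_0^{\delta_i}z_n^{s-\tilde{\beta}/\mu}\,\mathrm{d}z_n$ in the third regime, the constant $C(\tilde{\beta})$ inherited from \eqref{eq:in-approx} remaining bounded there.

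I expect the main obstacle to be the bookkeeping: tracking which endpoint dominates the one-dimensional integral in each of the four $\tilde{\beta}$-regimes and confirming that every subdominant contribution (the $z_n>\delta_i$ tail, the deep-interior region, and the patch-exterior region) is absorbed into the single bound \eqref{eq:r3}, together with the care needed in the half-space reduction when $x_i$ lies far from $\partial\Omega$, so that a boundary-centered flattening does not contain $x_i$ and the crude estimates of the second paragraph must be invoked instead.
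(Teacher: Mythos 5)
Your proposal is correct and follows essentially the same approach as the paper: reduce via \eqref{eq:in-approx} to a weighted Riesz-type integral, compute it in the half-space model $\mathbb{R}^n_+$ with the lower cutoff $z_n\gtrsim h^\mu$ and the ball-exclusion of radius $\eqsim\delta_i$ around $x_i$, then transfer to a general bounded Lipschitz domain by a bi-Lipschitz flattening and a partition of unity exactly as in Lemma \ref{lm:r1}. The only difference is organizational: the paper excludes a cylinder $\mathcal{C}_i$ and splits the $n$-dimensional model integral into $I_1,I_2$ (plus $J_0,J_1,J_2$ in the intermediate regime), each bounded by $\delta_i^{-1-2s}\int_{C_2h^\mu}^M z_n^{s-\tilde{\beta}/\mu}\,\mathrm{d}z_n$, whereas you first integrate out $z'$ to obtain the sharper profile $\max\{\delta_i,z_n\}^{-1-2s}$ and then evaluate a single 1D integral split at $z_n=\delta_i$; both routes produce the same four regimes, the same absorption of subdominant terms, and the same $|\mu(1+s)-\tilde{\beta}|^{-1}$ blow-up.
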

\begin{proof}
Since $R_{3,i} \subset \Omega_h^{0,\delta}$, we apply
\eqref{eq:in-approx} to obtain 
$$ 
  \left| \int_{R_{3,i}} \frac{u(y) - \mathcal{I}_hu(y)}{|x_i -y
  |^{n+2s}} \,\mathrm{d}y \right| \leq C(\tilde{\beta}) h^{\tilde{\beta}}
  \int_{R_{3,i}} \frac{1}{\delta(y)^{\frac{\tilde{\beta}}{\mu} - s} |x_i -
  y|^{n+2s}} \,\mathrm{d}y, 
$$ 
which boils down to the estimate of the right integral. Similar to the
estimate of integral on $R_{1,i}$, we consider first the case when
$\Omega = \mathbb{R}_+^n$ but the integral domain is restricted by
$z_n \leq M$ for $M>0$ sufficiently large. Specifically, it holds that
$R_{3,i} \subset \{(z',z_n): z_n \geq C_2h^\mu \}$ (This is because
$C_\delta > 1$ from \eqref{eq:interior-node}, and hence
$\Omega\setminus \Omega_h^{0,\delta}$ has at least one layer of
elements with size $h^\mu$).  Moreover, we consider $z_{i}^\star =
(0,\cdots,0,a)$ and a cylinder $\mathcal{C}_i$ with size
$\delta_i$: (see Figure \ref{fg:R3})
$$
  \mathcal{C}_i := \{ (z',z_n) \,:\, |z_n - a| < C_1\delta_i,~
  |z'| < C_1\delta_i\}. 
$$
Upon the relationship between $\tilde{\beta}$, $\mu(1+s)$ and $\mu s$,
we consider the estimate of the following integral in three cases:
\begin{equation} \label{eq:r3-I}
I:= \int_{\mathbb{R}^{n-1}\times[C_2h^\mu,M] \setminus \mathcal{C}_i}
  \frac{1}{z_n^{\frac{\tilde{\beta}}{\mu}- s} |z_{i}^\star -
  z|^{n+2s}}\,\mathrm{d}z.
\end{equation}

\underline{Case 1: $\tilde{\beta} \geq \mu(1+s)$.} Then, we divide $I$
  into two components:
$$ 
  \begin{aligned}
    I & = \Big( \int_{C_2h^\mu}^M \int_{|z'|>C_1\delta_i} \\
    & \quad + \int_{[C_2h^\mu,M] \setminus [a-C_1\delta_i,
    a+C_1\delta_i]}\int_{|z'|<C_1\delta_i} 
    \Big)
  \frac{1}{z_n^{\frac{\tilde{\beta}}{\mu} - s} |z_{i}^\star -
    z|^{n+2s}}\,\mathrm{d}z'\mathrm{d}z_n \\
  &:= I_1 + I_2.
  \end{aligned}
$$ 
The estimates of $I_1$ and $I_2$ are given as 
$$ 
\begin{aligned}
  I_1 & \leq \int_{C_2h^\mu}^M \frac{1}{z_n^{\frac{\tilde{\beta}}{\mu} - s}}
  \,\mathrm{d}z_n \int_{|z'|>C_1\delta_i}
  \frac{1}{|z'|^{n+2s}}\,\mathrm{d}z' \\
  & \leq C\delta_i^{-1-2s} \int_{C_2h^\mu}^M
  \frac{1}{z_n^{\frac{\tilde{\beta}}{\mu} -
  s}} \,\mathrm{d}z_n \leq 
  \left\{
    \begin{array}{ll}
      Ch^{\mu(s+1) - \tilde{\beta}} \delta_i^{-1-2s} & \mbox{if
      }\tilde{\beta} > \mu(1+s),
      \\
      C |\log h|\delta_i^{-1-2s} & \mbox{if }\tilde{\beta} = \mu(1+s),
    \end{array}
  \right. \\
  I_2 & \leq \int_{[C_2h^\mu,M] \setminus [a-C_1\delta_i,
    a+C_1\delta_i]}
  \frac{1}{z_n^{\frac{\tilde{\beta}}{\mu}-s}|z_n-a|^{n+2s}}\,\mathrm{d}z_n
  \int_{|z'|<C_1\delta_i}\,\mathrm{d}z' \\
  & \leq C \delta_i^{-1-2s} \int_{C_2h^\mu}^{M}
  \frac{1}{z_n^{\frac{\tilde{\beta}}{\mu} - s}} \,\mathrm{d}z_n \leq  
  \left\{
    \begin{array}{ll}
      Ch^{\mu(s+1) - \tilde{\beta}} \delta_i^{-1-2s} & \mbox{if
      }\tilde{\beta} > \mu(1+s), \\
      C |\log h|\delta_i^{-1-2s} & \mbox{if }\tilde{\beta} = \mu(1+s).
    \end{array}
  \right. 
\end{aligned}
$$
Here, the constant blows up as $\tilde{\beta} \to \mu(1+s)^+$.

\underline{Case 2: $\mu(1+s) > \tilde{\beta} >
\mu s$.} In this case, $z_n^{-(\frac{\tilde{\beta}}{\mu} - s)}$ is
unbounded but integrable at $z_n = 0$. We take $\tilde{C}_2 > 0$ so
that \commentone{$\tilde{C}_2 \delta_{i}\geq C_2 h^{\mu}$} (guaranteed by Lemma
\ref{lm:in-out-distance}). Therefore, we divide $I$ into three
components, 
$$
\begin{aligned}
    I & = \Big( \int_{\mathbb{R}^{n-1}\times[\tilde{C}_2 \delta_i,M]
    \setminus \mathcal{C}_i} + \int_{C_2h^\mu}^{\tilde{C}_2\delta_i}
    \int_{|z'|>C_1\delta_i} \\
    & \quad + \int_{[C_2h^\mu,\tilde{C}_2 \delta_i] \setminus [a-C_1\delta_i,
    a+C_1\delta_i]}\int_{|z'|<C_1\delta_i} 
    \Big)
  \frac{1}{z_n^{\frac{\tilde{\beta}}{\mu} - s} |z_{i}^\star -
    z|^{n+2s}}\,\mathrm{d}z'\mathrm{d}z_n \\
  &:= J_0 + J_1 + J_2.
\end{aligned}
 $$
Since $z_n \geq \tilde{C}_2 \delta_i$ in the integral domain of $J_0$, we have
$$
J_0 \leq C \delta_i^{-\frac{\tilde{\beta}}{\mu} + s}
\int_{\mathbb{R}^{n-1}\times[\tilde{C}_2 \delta_i,M] \setminus
\mathcal{C}_i} \frac{1}{|z_i^\star - z|^{n+2s}} \,\mathrm{d}z'\mathrm{d}z_n \leq C
\delta_i^{-\frac{\tilde{\beta}}{\mu} - s}.
$$ 
The estimates of $J_\ell~(\ell = 1,2)$ are similar to the Case 1, i.e., 
$$ 
J_\ell \leq C \delta_i^{-1-2s} \int_{C_2h^\mu}^{\tilde{C}_2 \delta_i}
\frac{1}{z_n^{\frac{\tilde{\beta}}{\mu} - s}} \,\mathrm{d}z_n \leq C
\delta_i^{-\frac{\tilde{\beta}}{\mu} - s} \quad \ell  = 1,2,
$$ 
where the constant blows up as $\tilde{\beta} \to \mu(1+s)^-$.

\underline{Case 3: $\mu s \geq \tilde{\beta}$}. In this case,
$z_n^{-(\frac{\tilde{\beta}}{\mu} - s)}$ is bounded, and hence $I \leq
C(M) \delta_i^{-2s}$.

Combining Case 1-3, we arrive at
\begin{equation} \label{eq:r3-estimate}
  h^{\tilde{\beta}}I \leq 
  \left\{
    \begin{array}{ll} 
      Ch^{\mu(s+1)} \delta_i^{-1-2s} &~ \mbox{if }\tilde{\beta} > \mu(1+s),\\
      C h^{\mu(s+1)} |\log h| \delta_i^{-1-2s} &~ \mbox{if
      }\tilde{\beta} = \mu(1+s),\\
      C h^{\tilde{\beta}} \delta_i^{-\frac{\tilde{\beta}}{\mu} - s} &~
      \mbox{if } \mu(1+s) > \tilde{\beta} > \mu s, \\
      C(M) h^{\tilde{\beta}} \delta_i^{-2s} & ~\mbox{if } \mu s \geq
      \tilde{\beta}.
  \end{array}
  \right. 
\end{equation}

We generalize this result to a bounded Lipschitz domain $\Omega$.
Using the notation as in Definition \ref{df:Lipschitz}, there exists
$\{V_k\}_{k=1}^K$ such that $\partial\Omega \subset \cup_{k=1}^K V_k$.
Notice that $\delta(y) \gtrsim 1$ for $y \in R_{3,i} \setminus
\cup_{k=1}^K V_k$, then 
\begin{equation} \label{eq:r3-1}
\begin{aligned}
  & \quad h^{\tilde{\beta}} \int_{R_{3,i} \setminus \cup_{k=1}^K V_k}
\frac{1}{\delta(y)^{\frac{\tilde{\beta}}{\mu} - s} |x_i - y|^{n+2s}}
  \,\mathrm{d}y \\ 
  & \leq C h^{\tilde{\beta}}\int_{|y - x_i| >
  (1+C_d)^{-1}\delta_i}
  \frac{1}{|x_i - y|^{n+2s}}
  \,\mathrm{d}y \leq Ch^{\tilde{\beta}}\delta_i^{-2s} \leq Ch^{\tilde{\beta}}I.
\end{aligned}
\end{equation} 
For any $V_k\cap R_{3,i}$, using the bi-Lipschitz changes of
coordinates, and noticing that the cylinder $\mathcal{C}_i$ is
equivalent (mutually bounded up to constant) to
$B_{(1+C_d)^{-1}\delta_i}(z_{i}^\star)$, then 
\begin{equation} \label{eq:r3-2}
  \quad h^{\tilde{\beta}} \sum_{k=1}^K \int_{R_{3,i}\cap V_k}
\frac{1}{\delta(y)^{\frac{\tilde{\beta}}{\mu} - s} |x_i - y|^{n+2s}}
\,\mathrm{d}y \leq  Ch^{\tilde{\beta}}I,
\end{equation} 
where the constant $M$ in \eqref{eq:r3-estimate} can be taken as
$\mathcal{O}(\mathrm{diam}(\Omega))$. Combining \eqref{eq:r3-1}
and \eqref{eq:r3-2} yields \eqref{eq:r3}, as asserted. 
\end{proof}

Combining Lemmas \ref{lm:r1}-\ref{lm:r3}, and noticing that $h^\mu
\delta_i^{-1} \lesssim 1$ for the $\delta$-interior nodes, we then
have the $\delta$-interior consistency of $\mathcal{L}_h^T$ as
follows.

\begin{lemma}[$\delta$-interior consistency of $\mathcal{L}_h^T$] 
\label{lm:consistency-t1}
Let $\beta > 2s$ be such that neither $\tilde{\beta} - 2s$ nor $\tilde{\beta}$
  is an integer. Let $f \in L^\infty(\Omega)$ be such that
  $\|f\|_{\tilde{\beta}-2s;\Omega}^{(s)}<
  \infty$.  On the graded grids \eqref{eq:graded-h} \commentone{with any $\mu \geq 1$}, there exists $h_0>0$ such that when
  $h < h_0$, the solution of \eqref{eq:fL} satisfies,
  \begin{equation} \label{eq:consistency-t1}
    \begin{aligned}
      & \left| \int_{\Omega_i^c} \frac{u(y) -
      \mathcal{I}_hu(y)}{|x_i -y |^{n+2s}} \,\mathrm{d}y \right|  
      \leq C(\tilde{\beta}) h^{\tilde{\beta}} \delta_i^{s -
      \frac{\tilde{\beta}}{\mu}} H_i^{-2s} \\ 
      & \qquad  + \left\{ \begin{array}{ll} C(\tilde{\beta})
        h^{\mu(s+1)} \delta_i^{-1-2s} &\mbox{if }\tilde{\beta} >
        \mu(1+s),\\
      C(\tilde{\beta}) h^{\mu(s+1)} |\log h| \delta_i^{-1-2s}
        &\mbox{if }\tilde{\beta} = \mu(1+s),\\
      C(\tilde{\beta}) h^{\tilde{\beta}}
        \delta_i^{-\frac{\tilde{\beta}}{\mu} - s} &\mbox{if } \mu(1+s)
        > \tilde{\beta} > \mu s, \\
      C(\tilde{\beta}) h^{\tilde{\beta}} \delta_i^{-2s} &\mbox{if }
        \mu s \geq \tilde{\beta},
  \end{array}
  \right.
    \quad \forall x_i \in \mathcal{N}_h^{0,\delta}, 
    \end{aligned}
  \end{equation}
where the constant $C(\tilde{\beta})$ depends also on $\Omega$, $s$,
$\|f\|_{L^\infty(\Omega)}$ and $\|f\|_{\tilde{\beta}
-2s;\Omega}^{(s)}$. Moreover, the constant $C(\tilde{\beta})$ behaves
  as $|\mu(1+s) - \tilde{\beta}|^{-1}$ when $\tilde{\beta} \to
  \mu(1+s)^-$.
\end{lemma}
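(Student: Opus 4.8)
The plan is to reduce the estimate to Lemmas \ref{lm:r1}--\ref{lm:r3} by splitting the domain of integration. Since $u-\mathcal{I}_h u$ vanishes on $\Omega^c$, we may replace $\Omega_i^c$ by $\Omega_i^c\cap\Omega$; writing
$$
\Omega = \bigl(\Omega\setminus\Omega_h^{0,\delta}\bigr)\cup\bigl(\Omega_h^{0,\delta}\cap B_{(1+C_d)^{-1}\delta_i}(x_i)\bigr)\cup\bigl(\Omega_h^{0,\delta}\setminus B_{(1+C_d)^{-1}\delta_i}(x_i)\bigr)
$$
shows that the decomposition \eqref{eq:r123-uniform} exhausts $\Omega_i^c\cap\Omega$, and therefore
$$
\left|\int_{\Omega_i^c}\frac{u(y)-\mathcal{I}_hu(y)}{|x_i-y|^{n+2s}}\,\mathrm{d}y\right|
\leq \sum_{\ell=1}^{3}\left|\int_{R_{\ell,i}}\frac{u(y)-\mathcal{I}_hu(y)}{|x_i-y|^{n+2s}}\,\mathrm{d}y\right|.
$$
Lemma \ref{lm:r2} bounds the $R_{2,i}$-term by $C(\tilde{\beta})h^{\tilde{\beta}}\delta_i^{s-\tilde{\beta}/\mu}H_i^{-2s}$, which is precisely the first summand in \eqref{eq:consistency-t1}; Lemma \ref{lm:r3} bounds the $R_{3,i}$-term by the case-dependent expression appearing there, with the blow-up $|\mu(1+s)-\tilde{\beta}|^{-1}$ inherited verbatim. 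It then remains only to absorb the $R_{1,i}$-term, which Lemma \ref{lm:r1} bounds by $Ch^{\mu(s+1)}\delta_i^{-1-2s}$ for $h<h_0$, into these.

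For the absorption I would use the elementary fact that $h^\mu\delta_i^{-1}\lesssim 1$ whenever $x_i\in\mathcal{N}_h^{0,\delta}$, which follows from $\delta_i\geq h_i$ together with \eqref{eq:shape-reg3} and \eqref{eq:graded-lam0}, and then dispatch the four cases of \eqref{eq:consistency-t1}. If $\tilde{\beta}>\mu(1+s)$ the bound $h^{\mu(s+1)}\delta_i^{-1-2s}$ is literally the $R_{3,i}$-term; if $\tilde{\beta}=\mu(1+s)$ it is dominated by the same term carrying the extra $|\log h|$ factor; if $\mu(1+s)>\tilde{\beta}>\mu s$ one writes
$$
h^{\mu(s+1)}\delta_i^{-1-2s} = h^{\tilde{\beta}}\delta_i^{-\tilde{\beta}/\mu-s}\,\bigl(h^\mu\delta_i^{-1}\bigr)^{s+1-\tilde{\beta}/\mu},
$$
where the exponent $s+1-\tilde{\beta}/\mu>0$, so the last factor is $\lesssim 1$; and if $\mu s\geq\tilde{\beta}$ one writes $h^{\mu(s+1)}\delta_i^{-1-2s}=h^{\tilde{\beta}}\delta_i^{-2s}\cdot h^{\mu(s+1)-\tilde{\beta}}\delta_i^{-1}$ with $\mu(s+1)-\tilde{\beta}\geq\mu$, so that $h^{\mu(s+1)-\tilde{\beta}}\delta_i^{-1}\leq h^\mu\delta_i^{-1}\lesssim 1$. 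In each case the $R_{1,i}$-term is controlled by a term already present in \eqref{eq:consistency-t1}, which completes the argument; no smallness of $h$ beyond the $h_0$ of Lemmas \ref{lm:r1} and \ref{lm:r3} is needed.

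Because Lemmas \ref{lm:r1}--\ref{lm:r3} already carry all the analytic weight, there is no genuine obstacle here; the work is purely bookkeeping of the four-way case split. The one place asking for a little care is the last case $\mu s\geq\tilde{\beta}$, where one must keep $\mu(s+1)-\tilde{\beta}$ bounded below by a fixed positive quantity (namely $\mu$) so that the surplus power of $h$ does not degenerate --- and that is exactly where the hypothesis $\mu s\geq\tilde{\beta}$ is used. Finally, I would record, as the statement does, that among the four cases the constant blows up only as $\tilde{\beta}\to\mu(1+s)^-$, which is already a feature of Lemma \ref{lm:r3} and is untouched by the (harmless) absorption of the $R_{1,i}$-term.
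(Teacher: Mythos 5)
Your proposal is correct and follows exactly the paper's (one-sentence) argument: the paper itself states the lemma is obtained by ``Combining Lemmas~\ref{lm:r1}--\ref{lm:r3}, and noticing that $h^\mu\delta_i^{-1}\lesssim 1$ for the $\delta$-interior nodes.'' You have simply carried out the case-by-case absorption of the $R_{1,i}$-bound $Ch^{\mu(s+1)}\delta_i^{-1-2s}$ into the four branches of \eqref{eq:consistency-t1} explicitly, which is precisely the bookkeeping the paper leaves implicit.
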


\section{Pointwise error estimate}
\label{sc:error}

In this section, we establish the convergence rate of the proposed scheme
under the choice of $H_i$ given in \eqref{eq:Hi}, i.e.,
$$
H_i = h_i^{\alpha} \min\{\delta_i^{1-\alpha}, \delta_0^{1-\alpha} \}. 
$$
Thanks to Lemma \ref{lm:delta-outside} ($\delta$ outside
$\Omega_h^{0,\delta}$) and \eqref{eq:graded-h}, we have 
\begin{equation} \label{eq:graded-H}
H_i \eqsim \left\{
\begin{array}{ll}
h^{\mu} \eqsim \delta_i & \mbox{if }  x_i \in \mathcal{N}_h^0
  \setminus \mathcal{N}_h^{0,\delta}, \\
h^{\alpha} \delta_i^{1-\frac{\alpha}{\mu}} & \mbox{if } x_i \in
  \mathcal{N}_h^{0,\delta}.
\end{array}
\right.
\end{equation} 
Note that the \eqref{eq:graded-H} also holds for $\delta_i \geq
\delta_0$, where $H_i = h_i^\alpha \delta_0^{1-\alpha} \eqsim h^\alpha
\eqsim h^\alpha \delta_i^{1-\frac{\alpha}{\mu}}$. 

In light of Corollary \ref{co:rho-holder} ($\delta$-dependence in
H\"{o}lder norm) and Remark \ref{rk:blow-up-speed} (blow-up behavior),
we define the following two indices:
\begin{equation}
\begin{aligned}
\hat{\kappa} &:= \left\{
\begin{array}{ll}
1 & \mbox{if } \hat{\beta} - 2s \mbox{ or } \hat{\beta} \mbox{ is an integer}, \\
0 & \mbox{otherwise},
\end{array}
\right. \\
\tilde{\kappa} &:= \left\{
\begin{array}{ll}
1 & \mbox{if } \tilde{\beta} - 2s \mbox{ or } \tilde{\beta} \mbox{ is an integer}, \\
0 & \mbox{otherwise},
\end{array}
\right.
\end{aligned}
\end{equation}
where we recall that $\hat{\beta} = \min\{\beta, 4\}$ and
$\tilde{\beta} = \min\{\beta, 2\}$.

We now derive pointwise error estimates for the solution of
\eqref{eq:fL}. We proceed as follows.  We apply the global and
$\delta$-interior consistency results respectively to control the
consistency errors outside and inside $\Omega_h^{0,\delta}$. The
combination of consistency error and  Lemma \ref{lm:barrier} (discrete
barrier function) will conclude the argument, thanks to Lemma
\ref{lm:dcp} (discrete comparison principle).
 
\begin{theorem}[Convergence rates in terms of $h$] \label{tm:err-h}
Let $\Omega$ be a bounded Lipschitz domain with exterior ball
condition. Let $f \in L^\infty(\Omega)$ be such that   
$\|f\|_{\beta-2s;\Omega}^{(s)}  < \infty$. Then, on the graded 
grids with \commentone{any $\mu \geq 1$} and $H_i$ chosen as in \eqref{eq:Hi}, we have 
\begin{equation} \label{eq:err-h1}
  \|u - u_h\|_{L^\infty(\Omega)} \leq C
  \max\{ h^{\mu s}, |\log h|^{\hat{\kappa}} h^{(\hat{\beta} - 2s)\alpha}, |\log h|^{\tilde{\kappa}} h^{\tilde{\beta} - 2s\alpha} \},
\end{equation}
where the constant depends on $\Omega$, $s$, $\beta$,
$\|f\|_{L^\infty(\Omega)}$ and $\|f\|_{\beta -2s;\Omega}^{(s)}$. 
Moreover, the optimal $\alpha$ and corresponding convergence rate are 
\begin{equation} \label{eq:err-h2}
\alpha_{\mathrm{opt}} := \frac{\tilde{\beta}}{\hat{\beta}} \in
  [\frac12, 1], \quad \|u - u_h\|_{L^\infty(\Omega)} \leq C \max\{
    h^{\mu s}, |\log h|^{\max\{\hat{\kappa}, \tilde{\kappa}\} }
    h^{\tilde{\beta} - 2s\tilde{\beta} / \hat{\beta} } \}.
\end{equation}
\end{theorem}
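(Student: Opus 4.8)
The plan is to combine the consistency estimates of Sections~\ref{sc:Lhs}--\ref{sc:Lht} with the discrete comparison principle (Lemma~\ref{lm:dcp}) via the barrier $b_h$ of Lemma~\ref{lm:barrier}. First I would note that, since $\mathcal{L}_h[u_h](x_i)=f(x_i)=\mathcal{L}[u](x_i)$ for every $x_i\in\mathcal{N}_h^0$, the local defect $E_i:=|\mathcal{L}_h[\mathcal{I}_hu](x_i)-f(x_i)|$ is exactly the consistency error decomposed in \eqref{eq:consistency}. On the $\delta$-interior nodes $x_i\in\mathcal{N}_h^{0,\delta}$ I would bound its three components by Lemma~\ref{lm:consistency-s1}, \eqref{eq:consistency-Ih1} and Lemma~\ref{lm:consistency-t1}; on the remaining nodes, where \eqref{eq:graded-H} gives $\delta_i\eqsim H_i\eqsim h^\mu$, I would instead use the global estimates Lemma~\ref{lm:consistency-s2}, \eqref{eq:consistency-Ih2} and Lemma~\ref{lm:consistency-t2}. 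Substituting $H_i\eqsim h^\alpha\delta_i^{1-\alpha/\mu}$ (again \eqref{eq:graded-H}) reduces $E_i$ to a finite sum of terms of the form $c\,h^a\delta_i^b$, occasionally carrying a $|\log h|$ factor.

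For the comparison step, since $\mathcal{L}_h$ is linear and $\mathcal{L}_h[b_h](x_i)\geq C\delta_i^{-2s}$, I would set $M:=C^{-1}\max_{x_i\in\mathcal{N}_h^0}E_i\delta_i^{2s}$, so that $\mathcal{L}_h[\mathcal{I}_hu+Mb_h](x_i)\geq f(x_i)$ and $\mathcal{L}_h[\mathcal{I}_hu-Mb_h](x_i)\leq f(x_i)$ at every interior node; because $\mathcal{I}_hu\pm Mb_h$ and $u_h$ all vanish on $\partial\Omega$, Lemma~\ref{lm:dcp} yields $\|u_h-\mathcal{I}_hu\|_{L^\infty(\Omega)}\leq M\|b_h\|_{L^\infty(\Omega)}\leq M$, and then $\|u-u_h\|_{L^\infty(\Omega)}\leq M+\|u-\mathcal{I}_hu\|_{L^\infty(\Omega)}$ with the interpolation term controlled by \eqref{eq:global-approx} (hence dominated by $h^{\mu s}$ and $h^{\tilde\beta}\leq h^{\tilde\beta-2s\alpha}$).

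It remains to evaluate $M$ by maximizing each monomial $c\,h^a\delta_i^{\,b+2s}$ over the admissible range $\delta_i\in[c'h^\mu,\operatorname{diam}(\Omega)]$: the maximum is $O(h^a)$ if $b+2s\geq 0$ and $O(h^{a+\mu(b+2s)})$ if $b+2s<0$. Carried out term by term, the singular part yields $\max\{h^{\alpha(\hat\beta-2s)},h^{\mu s}\}$, the interpolation term and the leading tail term yield $\max\{h^{\tilde\beta-2s\alpha},h^{\mu s}\}$, the remaining tail contributions of Lemma~\ref{lm:consistency-t1} all collapse to $h^{\mu s}$ (using $h^{\tilde\beta}\leq h^{\tilde\beta-2s\alpha}$ in the regime $\mu s\geq\tilde\beta$), and the global estimates on the non-$\delta$-interior nodes give only $\max\{h^{\mu s},h^{\tilde\beta-2s\alpha}\}$; collecting these gives \eqref{eq:err-h1}. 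The factors $|\log h|^{\hat\kappa}$ and $|\log h|^{\tilde\kappa}$ arise because, when $\hat\beta$ or $\hat\beta-2s$ (resp.\ $\tilde\beta$ or $\tilde\beta-2s$) is an integer, Corollary~\ref{co:rho-holder} is not directly available and one instead applies it to a perturbed index $\hat\beta-\epsilon$ with the blow-up constant $\epsilon^{-1}$ of Remark~\ref{rk:blow-up-speed}, then takes $\epsilon\eqsim|\log h|^{-1}$ so that $h^{-\alpha\epsilon}=O(1)$. Finally, for \eqref{eq:err-h2} I would minimize $\max\{h^{\alpha(\hat\beta-2s)},h^{\tilde\beta-2s\alpha}\}$ over $\alpha\in[\tfrac12,1]$: the exponent $\alpha(\hat\beta-2s)$ is increasing in $\alpha$ (since $\hat\beta>2s$) and $\tilde\beta-2s\alpha$ is decreasing, so the minimax is attained where the two exponents coincide, i.e.\ $\alpha_{\mathrm{opt}}=\tilde\beta/\hat\beta$, which lies in $[\tfrac12,1]$ because $\tilde\beta=\min\{\beta,2\}\leq\min\{\beta,4\}=\hat\beta\leq 2\tilde\beta$, and there both exponents equal $\tilde\beta-2s\tilde\beta/\hat\beta$, with logarithmic factor $|\log h|^{\max\{\hat\kappa,\tilde\kappa\}}$.

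The comparison-principle mechanism and the individual consistency lemmas are already in place, so the actual work is the bookkeeping in the third step: one must verify that, after inserting \eqref{eq:graded-H}, every $\delta_i$-weighted term — in particular each of the four regimes of Lemma~\ref{lm:consistency-t1}, and especially the knife-edge case $\tilde\beta=\mu(1+s)$, where the extra $|\log h|$ must be handled by the logarithmic-factor device — stays below the three quantities $h^{\mu s}$, $|\log h|^{\hat\kappa}h^{\alpha(\hat\beta-2s)}$, $|\log h|^{\tilde\kappa}h^{\tilde\beta-2s\alpha}$, and that the several regularity blow-up constants of Remark~\ref{rk:blow-up-speed} are correctly absorbed into the logarithms. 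This reconciliation of all the $\delta_i$-powers is the main obstacle; everything else is routine.
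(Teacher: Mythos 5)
Your overall strategy matches the paper's exactly: split the consistency error into the $\delta$-interior and the remaining interior nodes, invoke the local consistency lemmas together with \eqref{eq:graded-H}, absorb the resulting $\delta_i$-weights with the barrier $b_h$ (whose $\mathcal{L}_h$-image dominates $\delta_i^{-2s}$), conclude via the discrete comparison principle, add the interpolation error from \eqref{eq:global-approx}, and optimize $\alpha$ by equalizing the competing exponents. Your monomial-maximization viewpoint (bound each term $c\,h^a\delta_i^{b+2s}$ over $\delta_i\in[c'h^\mu,\mathrm{diam}(\Omega)]$) is also a faithful reorganization of the paper's computation, and your check that $\alpha_{\mathrm{opt}}=\tilde\beta/\hat\beta\in[\tfrac12,1]$ is correct.

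Where the proposal would not quite deliver the stated bound is in the handling of the knife-edge case $\tilde\beta=\mu(1+s)$ of Lemma~\ref{lm:consistency-t1}. You write that the extra $|\log h|$ there ``must be handled by the logarithmic-factor device,'' i.e.\ perturbing the regularity index by $\epsilon\eqsim|\log h|^{-1}$ as in Remark~\ref{rk:blow-up-speed}. That device is for a different obstruction (the blow-up of the regularity constant when $\hat\beta$, $\hat\beta-2s$, $\tilde\beta$ or $\tilde\beta-2s$ hits an integer) and, applied to the knife-edge, would only produce $\epsilon^{-1}h^{\mu s}=|\log h|\,h^{\mu s}$, since $C(\tilde\beta_0)\sim|\mu(1+s)-\tilde\beta_0|^{-1}$. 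That overshoots the theorem's first term $h^{\mu s}$, which carries no logarithm. The paper avoids the knife-edge altogether: because a bound in $C^{\tilde\beta-2s}$-type regularity implies the corresponding bound for any smaller index $\tilde\beta_0\in(2s,\tilde\beta)$, the paper applies Lemma~\ref{lm:consistency-t1} with a \emph{fixed} $\tilde\beta_0\in(\mu s,\mu(1+s))$ chosen once and for all (say the midpoint, avoiding integers), landing in the third regime with a finite constant and yielding $C\,h^{\mu s}\delta_i^{-2s}$ directly. The same ``down-scale the index'' trick is used in Case (2-a) to handle $(\hat\beta-2s)\alpha>\mu s$. You should separate these two mechanisms in your bookkeeping: $h$-dependent perturbation $\epsilon=|\log h|^{-1}$ only for the integer-coincidence issue, and $h$-independent down-scaling to a fixed smaller regularity index when the nominal index would overshoot or hit the $\mu(1+s)$ threshold.
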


\begin{proof}
  We consider the consistency error
  $\mathcal{L}_h[\mathcal{I}_hu](x_i) - \mathcal{L}[u](x_i)$ in two
  cases.

\underline{Case 1: $x_i \in \mathcal{N}_h^0 \setminus
  \mathcal{N}_h^{0,\delta}$.} Applying Lemma \ref{lm:consistency-s2}
  (global consistency of $\mathcal{L}_h^S$), Lemma
  \ref{lm:consistency-Ih} (consistency of interpolation) and Lemma
  \ref{lm:consistency-t2} (global consistency of $\mathcal{L}_h^T$),
  we have
 $$
  |\mathcal{L}_h[\mathcal{I}_hu](x_i) - \mathcal{L}[u](x_i)|  \leq C
    \delta_i^{-s} + \max\{Ch^{\mu s},
  C(\tilde{\beta}) h^{\tilde{\beta}}\} H_i^{-2s} \quad
    \forall x_i \in \mathcal{N}_h^0 \setminus
    \mathcal{N}_h^{0,\delta}.
$$ 
If $\tilde{\beta}$ or $\tilde{\beta} - 2s$ is an integer, we use
  Remark \ref{rk:blow-up-speed} (blow-up behavior) to obtain that, for
  arbitrary small $\varepsilon > 0$,
$$ 
C(\tilde{\beta} - \varepsilon) h^{\tilde{\beta} - \varepsilon} \leq
  \frac{C}{\varepsilon} h^{\tilde{\beta} - \varepsilon}.
$$ 
Taking $\varepsilon = |\log h|^{-1}$ leads to the bound $C|\log h|
  h^{\tilde{\beta}}$. Then, using $\delta_i \eqsim h^\mu \eqsim H_i$
  for $x_i \in \mathcal{N}_h^0 \setminus \mathcal{N}_h^{0,\delta}$
  (see \eqref{eq:graded-H}), we have
\begin{equation} \label{eq:err-out}
  |\mathcal{L}_h[\mathcal{I}_hu](x_i) - \mathcal{L}[u](x_i)| \leq
  C\max\{h^{\mu s}, |\log h|^{\tilde{\kappa}} h^{\tilde{\beta}} \}
  \delta_i^{-2s} \quad \forall x_i \in \mathcal{N}_h^0 \setminus
  \mathcal{N}_h^{0,\delta}.
\end{equation}
  
\underline{Case 2: $x_i \in \mathcal{N}_h^{0,\delta}$.} We consider
  the three components of the $\delta$-interior consistency errors.
  (2-a) In view of Lemma \ref{lm:consistency-s1} ($\delta$-interior
  consistency of $\mathcal{L}_h^S$) and \eqref{eq:graded-H}, we have 
$$ 
\begin{aligned}
\left| 
\mathcal{L}_h^S[u](x_i) - \int_{\Omega_i} \frac{u(x_i) - u(y)}{|x_i -
  y|^{n+2s}} \,\mathrm{d}y \right| & \leq
C(\hat{\beta}) \delta_i^{s-\hat{\beta}} H_i^{\hat{\beta} - 2s} \\
& \eqsim C(\hat{\beta}) h^{(\hat{\beta} - 2s)\alpha} \delta_i^{s -
  \frac{(\hat{\beta} - 2s) \alpha}{\mu}} \delta_i^{-2s}.
\end{aligned}
$$ 
If $(\hat{\beta} - 2s) \alpha \leq \mu s$, then $ \delta_i^{s -
  \frac{(\hat{\beta} - 2s) \alpha}{\mu}} \lesssim 1$ due to $\delta_i
  \lesssim 1$.  Hence, we have the upper bound $C(\hat{\beta})
  h^{(\hat{\beta} - 2s)\alpha} \delta_i^{-2s}$, which turns out to be
  $C |\log h|^{\hat{\kappa}} h^{(\hat{\beta} - 2s)\alpha}
  \delta_i^{-2s}$.  Otherwise, if $(\hat{\beta} - 2s) \alpha > \mu s$,
  there exists $\hat{\beta}_0 \in [2s + \frac{\mu s}{\alpha},
  \hat{\beta}]$, such that neither $\hat{\beta}_0-2s$ nor $\hat{\beta}_0$ is
  an integer. Then, 
$$ 
\begin{aligned}
C(\hat{\beta}_0) h^{(\hat{\beta}_0 - 2s) \alpha} \delta_i^{s -
  \frac{(\hat{\beta}_0 - 2s) \alpha}{\mu}} \delta_i^{-2s}  &\leq
  C(\hat{\beta}_0) h^{\mu s} \left( h^\mu \delta_i^{-1} \right)^{
  \frac{(\hat{\beta}_0 - 2s) \alpha}{\mu} - s } \delta_i^{-2s} \\
& \leq C(\hat{\beta}_0) h^{\mu s} \delta_i^{-2s}.
\end{aligned}
$$ 
Here, $h^\mu \delta_i^{-1} \lesssim 1$ is used, since $h^\mu \lesssim
h_i \leq \delta_i$. Summing up two cases, we have 
\begin{equation} \label{eq:err-in1}
\left| 
\mathcal{L}_h^S[u](x_i) - \int_{\Omega_i} \frac{u(x_i) - u(y)}{|x_i -
  y|^{n+2s}} \,\mathrm{d}y \right| \leq C \max\{h^{\mu s}, |\log
  h|^{\hat{\kappa}} h^{(\hat{\beta} - 2s)\alpha} \}  \delta_i^{-2s}.
\end{equation}

(2-b) Similarly, in view of Lemma \ref{lm:consistency-Ih} (consistency of
   interpolation) and \eqref{eq:graded-H}, we have for any $x_i \in
   \mathcal{N}_h^{0,\delta}$,
   \begin{equation} \label{eq:err-in2}
   \begin{aligned}
     \left|\mathcal{L}_h^S[\mathcal{I}_hu](x_i) - \mathcal{L}_h^S[u](x_i)
  \right| &\leq C(\tilde{\beta})
  h^{\tilde{\beta}}\delta_i^{s-\frac{\tilde{\beta}}{\mu}}
  H_i^{-2s} \\
  & \eqsim C(\tilde{\beta}) h^{\tilde{\beta} - 2s \alpha} \delta_i^{s
     - \frac{\tilde{\beta} - 2s\alpha}{\mu}} \delta_i^{-2s} \\
  & \leq C \max\{ h^{\mu s}, |\log h|^{\tilde{\kappa}}
     h^{\tilde{\beta} - 2s\alpha} \} \delta_i^{-2s}.
  \end{aligned}
   \end{equation}
   
  (2-c)  For the $\delta$-interior consistency of $\mathcal{L}_h^T$
  (Lemma \ref{lm:consistency-t1}), the first term on the right hand
  side of \eqref{eq:consistency-t1} is the same as the case (2-b).
  For the other term, since $\tilde{\beta} \geq \mu s$, there exists
  $\tilde{\beta}_0 \in (\mu s, \mu(s+1))$ such that neither
  $\tilde{\beta}_0-2s$ nor $\tilde{\beta}_0$ is an integer. Then, 
  $$ 
  C(\tilde{\beta}_0) h^{\tilde{\beta}_0}
  \delta_i^{-\frac{\tilde{\beta}_0}{\mu} - s} = C(\tilde{\beta}_0)
  h^{\mu s} \left( h^\mu \delta_i^{-1}
  \right)^{\frac{\tilde{\beta}_0}{\mu} - s} \delta_i^{-2s} \leq C
  h^{\mu s} \delta_i^{-2s}.
  $$ 
  Otherwise $\tilde{\beta} \leq \mu s$, then $C(\tilde{\beta})
  h^{\tilde{\beta}} \delta_i^{-2s} \leq C |\log h|^{\tilde{\kappa}}
  h^{\tilde{\beta}} \delta_i^{-2s}$.
  As a result, 
  \begin{equation} \label{eq:err-in3}
   \left| \int_{\Omega_i^c} \frac{u(y) -
      \mathcal{I}_hu(y)}{|x_i -y |^{n+2s}} \,\mathrm{d}y \right| \leq
      C \max\{
      h^{\mu s},
       |\log h|^{\tilde{\kappa}} h^{\tilde{\beta} -2s\alpha} \}
       \delta_i^{-2s}.
  \end{equation}
Combining \eqref{eq:err-in1}-\eqref{eq:err-in3}, the consistency error
on $\delta$-interior nodes $x_i \in \mathcal{N}_h^{0,\delta}$ is given
as
\begin{equation} \label{eq:err-in}
  |\mathcal{L}_h[\mathcal{I}_hu](x_i) - \mathcal{L}[u](x_i)| \leq C
  \max\{h^{\mu s},  |\log h|^{\hat{\kappa}} h^{(\hat{\beta} - 2s)
  \alpha}, |\log h|^{\tilde{\kappa}} h^{\tilde{\beta} - 2s\alpha} \}
  \delta_i^{-2s}.
\end{equation} 
  
The numerical solution satisfies $\mathcal{L}_h[u_h](x_i) = f(x_i) =
\mathcal{L}[u](x_i)$, $\forall x_i \in \mathcal{N}_h^0$.  Invoking the
discrete barrier function $b_h$ defined in Lemma \ref{lm:barrier}, the
estimates \eqref{eq:err-out} and \eqref{eq:err-in}
yield 
$$ 
\begin{aligned}
& \quad~ |\mathcal{L}_h[\mathcal{I}_hu - u_h](x_i)| \\
& \leq C \max\{h^{\mu s},
|\log h|^{\hat{\kappa}} h^{(\hat{\beta} - 2s) \alpha}, |\log
h|^{\tilde{\kappa}} h^{\tilde{\beta} - 2s\alpha} \}
\mathcal{L}_h[b_h](x_i) 
\quad 
\forall x_i \in \mathcal{N}_h^0,
\end{aligned}
$$ 
whence, by Lemma \ref{lm:dcp} (discrete comparison principle)
$$ 
\max_{x_i \in \mathcal{N}_h^0}|\mathcal{I}_h u (x_i) - u_h(x_i)| \leq
C \max\{h^{\mu s},  |\log h|^{\hat{\kappa}} h^{(\hat{\beta} - 2s)
\alpha}, |\log h|^{\tilde{\kappa}} h^{\tilde{\beta} - 2s\alpha} \} .
$$ 
The desired estimate \eqref{eq:err-h1} then follows from the
approximation result \eqref{eq:global-approx}. 

The optimal $\alpha$ satisfies $(\hat{\beta} - 2s) \alpha_{\rm{opt}} =
\tilde{\beta} - 2s \alpha_{\rm{opt}}$, which gives $\alpha_{\rm{opt}}
= \tilde{\beta} / \hat{\beta}$. It is straightforward to see that
$\alpha_{\rm{opt}} \in [\frac12, 1]$ from the definitions of
$\tilde{\beta}$ and $\hat{\beta}$. Taking the optimal $\alpha$ into
\eqref{eq:err-h1} leads to \eqref{eq:err-h2}. The proof is therefore
complete.
\end{proof}

\begin{remark}[Huang-Oberman's work \cite{huang2014numerical}
  revisited] In \cite{huang2014numerical}, the 1D uniform grid with
  $H_i = h$ was applied, namely $\mu = 1$ and $\alpha = 1$. Then,
  \eqref{eq:err-h1} implies the pointwise error
  $\mathcal{O}(\max\{h^s, |\log h|^{\tilde{\kappa}} h^{\tilde{\beta} -
  2s} \})$, which is observed in Table \ref{tb:huang-oberman} since
  $\tilde{\beta} \leq 2$.
\end{remark}

Using the relationship between $h$ and the total number of degrees of
freedom \eqref{eq:graded-dof}, we obtain the following theorem.
\begin{theorem}[Convergence rates in terms of $N$]  \label{tm:err-N}
Let $\Omega$ be a bounded Lipschitz domain with exterior ball
condition. Let $f \in L^\infty(\Omega)$ be such that   
$\|f\|_{\beta-2s;\Omega}^{(s)} < \infty$. Then, on the graded grids \eqref{eq:graded-h} and $H_i$ chosen as in
 \eqref{eq:Hi}, $\alpha$ as $\alpha_{\rm{opt}} = \tilde{\beta} /
  \hat{\beta}$, and 
\begin{equation} \label{eq:mu-opt}
\mu \left\{
\begin{array}{ll}
\in \left[\frac{\tilde{\beta}}{s} - \frac{2\tilde{\beta}}{\hat{\beta}}, \frac{n}{n-1}\right) &
  \mbox{if } \left(\frac{n}{n-1} +
  \frac{2\tilde{\beta}}{\hat{\beta}}\right) s > \tilde{\beta}, \\
  = \frac{n}{n-1} & \mbox{if } \left(\frac{n}{n-1} +
  \frac{2\tilde{\beta}}{\hat{\beta}}\right) s \leq \tilde{\beta}.
\end{array}
\right.
\end{equation}
Then, the convergence rate 
\begin{equation} \label{eq:err-N}
\| u - u_h\|_{L^\infty(\Omega)} \leq C \left\{
\begin{array}{ll}
(\log N)^{\max\{\hat{\kappa}, \tilde{\kappa}\}}
  N^{-\frac{1}{n}(\tilde{\beta} -
  \frac{2\tilde{\beta}}{\hat{\beta}}s)} &  \mbox{if }
  \left(\frac{n}{n-1} + \frac{2\tilde{\beta}}{\hat{\beta}}\right) s >
  \tilde{\beta}, \\
(\log N)^{\max\{\hat{\kappa}, \tilde{\kappa}\} + \frac{s}{n-1}}
  N^{-\frac{s}{n-1}} & \mbox{if } \left(\frac{n}{n-1} +
  \frac{2\tilde{\beta}}{\hat{\beta}}\right) s = \tilde{\beta}, \\
(\log N)^{\frac{s}{n-1}} N^{-\frac{s}{n-1}} & \mbox{if }
  \left(\frac{n}{n-1} + \frac{2\tilde{\beta}}{\hat{\beta}}\right) s <
  \tilde{\beta},
\end{array}
\right.
\end{equation}
where the constant depends on $\Omega$, $s$, $\beta$,
$\|f\|_{L^\infty(\Omega)}$ and $\|f\|_{\beta -2s;\Omega}^{(s)}$.
\end{theorem}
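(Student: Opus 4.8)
The plan is to combine the $h$-dependent error bound of Theorem~\ref{tm:err-h} (with the optimal choice $\alpha=\tilde\beta/\hat\beta$) with the grid-complexity relation \eqref{eq:graded-dof}, and then optimize over the grading parameter $\mu$. Write $r:=\tilde\beta-\frac{2\tilde\beta}{\hat\beta}s$, so that \eqref{eq:err-h2} reads $\|u-u_h\|_{L^\infty(\Omega)}\le C\max\{h^{\mu s},\,|\log h|^{\max\{\hat\kappa,\tilde\kappa\}}h^{r}\}$. The point is that $r$ is independent of $\mu$ while $\mu s$ increases with $\mu$; hence one wants $\mu$ large enough that $h^{\mu s}$ is absorbed by $h^{r}$, but raising $\mu$ above $\frac{n}{n-1}$ turns the first branch of \eqref{eq:graded-dof} into the third and inflates $N$. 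Balancing the $h$-rate against the $N$–$h$ relation produces the regimes in \eqref{eq:mu-opt}.

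\emph{First regime.} Assume $\big(\frac{n}{n-1}+\frac{2\tilde\beta}{\hat\beta}\big)s>\tilde\beta$, equivalently $r/s<\frac{n}{n-1}$. I would choose any $\mu$ in the stated interval (intersected with $[1,\infty)$, which keeps it nonempty since $\frac{n}{n-1}>1$), so that $\mu s\ge r$ and $\mu<\frac{n}{n-1}$. For $h<1$ this gives $h^{\mu s}\le h^{r}$, hence $\|u-u_h\|_{L^\infty(\Omega)}\le C|\log h|^{\max\{\hat\kappa,\tilde\kappa\}}h^{r}$. Since $\mu<\frac{n}{n-1}$, \eqref{eq:graded-dof} yields $N\eqsim h^{-n}$, so $h\eqsim N^{-1/n}$ and $|\log h|\eqsim\log N$; substituting produces the first line of \eqref{eq:err-N}.

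\emph{Second and third regimes.} Assume $\big(\frac{n}{n-1}+\frac{2\tilde\beta}{\hat\beta}\big)s\le\tilde\beta$, equivalently $r/s\ge\frac{n}{n-1}$, and take $\mu=\frac{n}{n-1}$. Then $\mu s=\frac{ns}{n-1}\le r$, so the dominant $h$-power is $h^{\mu s}=h^{ns/(n-1)}$; it carries the blow-up factor $|\log h|^{\max\{\hat\kappa,\tilde\kappa\}}$ exactly when $\mu s=r$ (the equality case of the hypothesis), and carries no such factor when $\mu s<r$ (the strict case, where $h^{\mu s}$ strictly dominates $|\log h|^{\max\{\hat\kappa,\tilde\kappa\}}h^{r}$ for small $h$). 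At $\mu=\frac{n}{n-1}$ the middle branch of \eqref{eq:graded-dof} gives $N\eqsim|\log h|\,h^{-n}$, hence $|\log h|\eqsim\log N$ and $h\eqsim(\log N/N)^{1/n}$. Then $h^{ns/(n-1)}\eqsim(\log N)^{s/(n-1)}N^{-s/(n-1)}$, and multiplying by the relevant $|\log h|^{\max\{\hat\kappa,\tilde\kappa\}}$ factor reproduces the second and third lines of \eqref{eq:err-N}. One also checks that $\mu>\frac{n}{n-1}$ does not help: there $N\eqsim h^{(1-n)\mu}$ forces $h\eqsim N^{-1/((n-1)\mu)}$, and even the best term $h^{\mu s}\eqsim N^{-s/(n-1)}$ gives the same $N$-rate.

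\emph{Main difficulty.} There is no genuine obstacle: the analytic heavy lifting is already contained in Theorem~\ref{tm:err-h}, and what remains is careful bookkeeping. The three points that need attention are keeping track of which of the two $h$-powers dominates — and hence whether the logarithmic blow-up factor from Remark~\ref{rk:blow-up-speed} survives; the borderline grading $\mu=\frac{n}{n-1}$, where \eqref{eq:graded-dof} contributes the extra $|\log h|$ that becomes the $(\log N)^{s/(n-1)}$ appearing in the last two lines; and admissibility of the chosen $\mu$, namely $\mu\ge1$, which is automatic here.
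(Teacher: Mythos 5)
Your proof is correct and follows exactly the route the paper intends: the paper gives no separate argument for Theorem~\ref{tm:err-N}, only the one-line remark that it ``follows from Theorem~\ref{tm:err-h} and \eqref{eq:graded-dof},'' and your bookkeeping — writing $r=\tilde\beta-\frac{2\tilde\beta}{\hat\beta}s$, splitting on whether $\mu s\ge r$ or $\mu s\le r$ determines which branch of \eqref{eq:err-h2} dominates, and translating $h$ to $N$ through the three branches of \eqref{eq:graded-dof} (including the $|\log h|\eqsim\log N$ and $h\eqsim(\log N/N)^{1/n}$ identifications at $\mu=\frac{n}{n-1}$) — is exactly that computation. One small inaccuracy at the end: for $\mu>\frac{n}{n-1}$ in the strict third regime you would in fact get $N^{-s/(n-1)}$ \emph{without} the $(\log N)^{s/(n-1)}$ factor, so it is slightly misleading to say such $\mu$ ``gives the same $N$-rate''; but since the theorem prescribes $\mu=\frac{n}{n-1}$ and you prove exactly the stated bound, this does not affect the correctness of your argument.
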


\section{Numerical Experiments} \label{sc:numerical}
In this section, we present some numerical experiments in both one and
two-dimensional domains, which illustrate the sharpness of our
theoretical estimates.  In all of the experiments below, we set $\Omega
= B_1(0) \subset \mathbb{R}^n,\; n = 1,2$ and $f \equiv 1$, so that we
have an explicit solution
\[
\begin{aligned}
  u(x) = \frac{2^{-2s}\Gamma(n/2)}{\Gamma(n/2+s)\Gamma(1+s)} \left(1 -
  |x|^2 \right)_+^s\quad \forall x\in\Omega.
\end{aligned}
\]
This corresponds to smooth right hand side and the discussion of
Section \ref{sc:error} applies with \commentone{$\tilde{\beta}=2$, $\hat{\beta}=4$, $\tilde{\kappa}=\hat{\kappa} = 1$}, and $\alpha_{\rm{opt}} =
\frac{\tilde{\beta}}{\hat{\beta}} = \frac12$. 

\subsection{1D test}
When $n =1$, we always have the relation $N \eqsim h^{-1}$ for any
$\mu \geq 1$ due to \eqref{eq:graded-dof}. In the approximation of the
singular integral, the domain $\Omega_i$ in \eqref{eq:singular-domain}
is taken as the open interval centered at $x_i$ with radius $H_i$,
namely, $\Omega_i = (x_i - H_i, x_i + H_i)$. \commentone{On any element $T$ for which $T \cap \Omega_i^c \neq \varnothing$, the intersection is still an inverval so that the weight can be calculated explicitly, see \cite[Section 3]{huang2014numerical}.} In this case, the
convergence rate estimate \eqref{eq:err-h2} given by Theorem
\ref{tm:err-h} turns out to be 
\begin{equation} \label{eq:err-test1}
\|u - u_h\|_{L^\infty(\Omega)} \leq C \max\{ h^{\mu s}, |\log h| h^{2-s} \}.
\end{equation}

We start with the numerical tests on quasi-uniform grids ($\mu=1$).
The convergence rates for several values of $s$ are listed in Table
\ref{tab:uni1d}, and the  computational errors for $s = 0.3$, $s =0.6$
and $s = 0.9$ are shown in Figure \ref{fig:uni1d}. In all cases, we
see good agreement with the convergence rate $\mathcal{O}(h^s)$
predicted by \eqref{eq:err-test1}.

	\begin{figure}[!htbp]
		\centering
		\subfloat[Convergence rates]{
		\centering
		\scalebox{0.8}{
		\begin{tabular}[b]{|c|c||c|c|}
				\hline
				$s$&Rate& $s$ & Rate\\
				\hline
				0.1 & 0.10&0.6 & 0.60\\
				0.2 & 0.20&0.7 & 0.71\\
				0.3 & 0.30&0.8 & 0.81\\
				0.4 &  0.40&0.9 & 0.94\\ 
				0.5 & 0.50 & &\\
				\hline
		\end{tabular}
		}
		\label{tab:uni1d}	
		} %
		\subfloat[$\log(L^\infty\text{-error}) - \log(h)$]{
		\centering
		\includegraphics[width=0.5\textwidth]{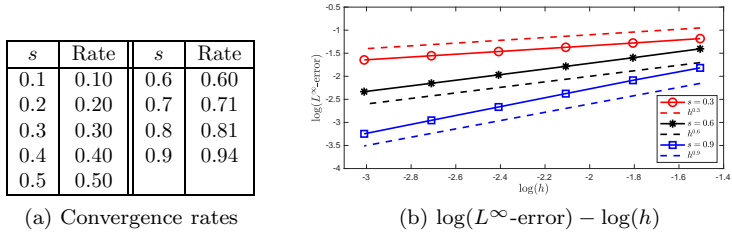}
		\label{fig:uni1d}
		}
	\caption{Convergence rates for problem (\ref{eq:fL}) using uniform grids in 1D case.}
	\end{figure}
	
We next consider numerical approximations using graded grids that
satisfying (\ref{eq:graded-h}) with $\mu =(2-s)/s$. We would expect a
convergence rate of order $\mathcal{O}(h^{2-s})$ (up to a logarithmic
factor) according to \eqref{eq:err-test1}. In Figure \ref{fig:grad1d}
we display the computational rates of convergence for $s = 0.4, 0.6,
0.8$, which are in good agreement with the theory.

\begin{figure}[!htbp]
	\begin{minipage}[b]{.45\linewidth}
		\centering
		\includegraphics[width = \textwidth]{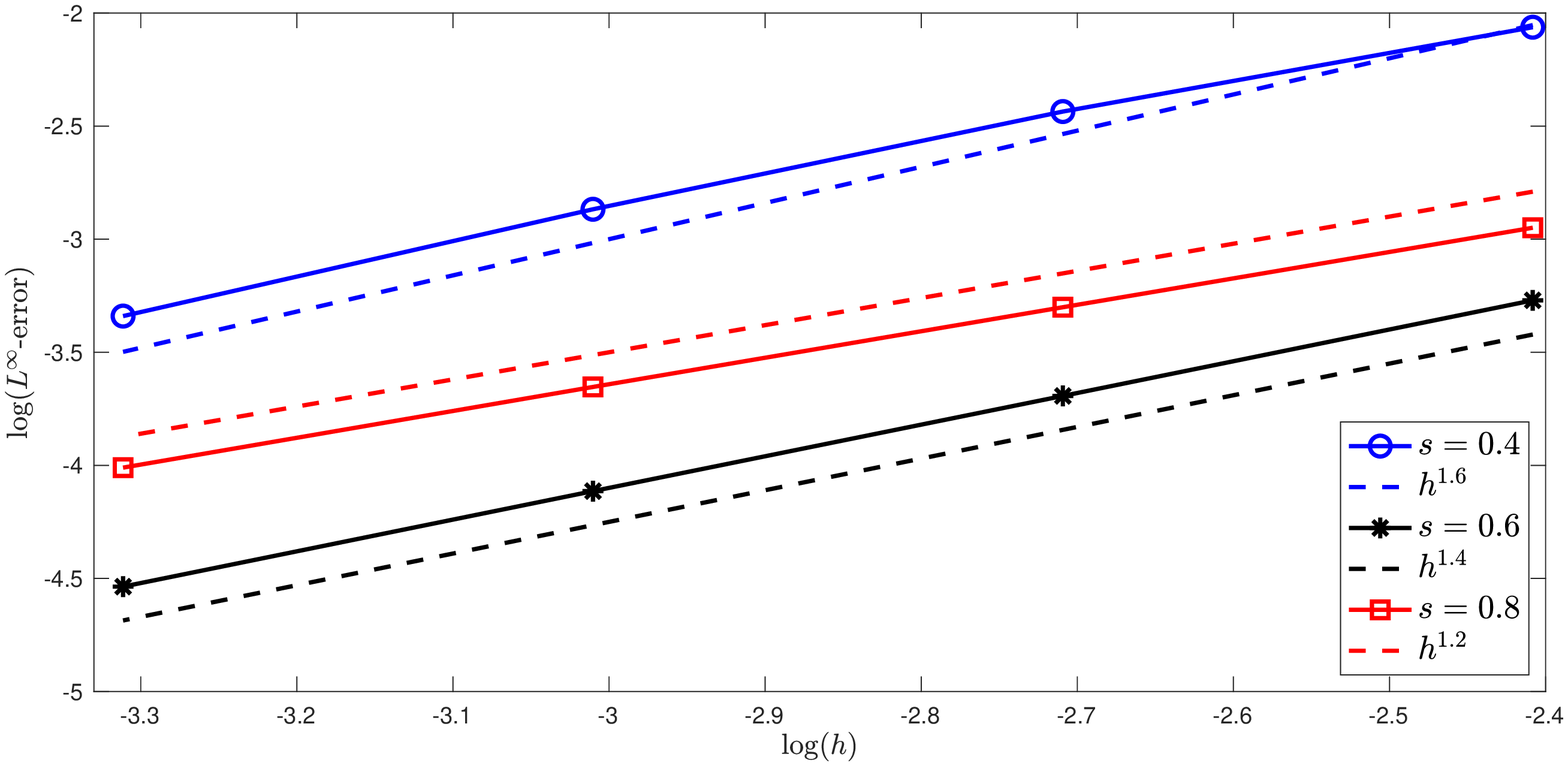}
    \caption{Convergence rates for graded grids with $\mu =
    \frac{2-s}{s}$ in 1D case.  Convergence rate of $2-s$ is
    observed.}
		\label{fig:grad1d}
	\end{minipage}
	\hspace{+0.2cm}
	\begin{minipage}[b]{.45\linewidth}
		\centering
		\includegraphics[width = \textwidth]{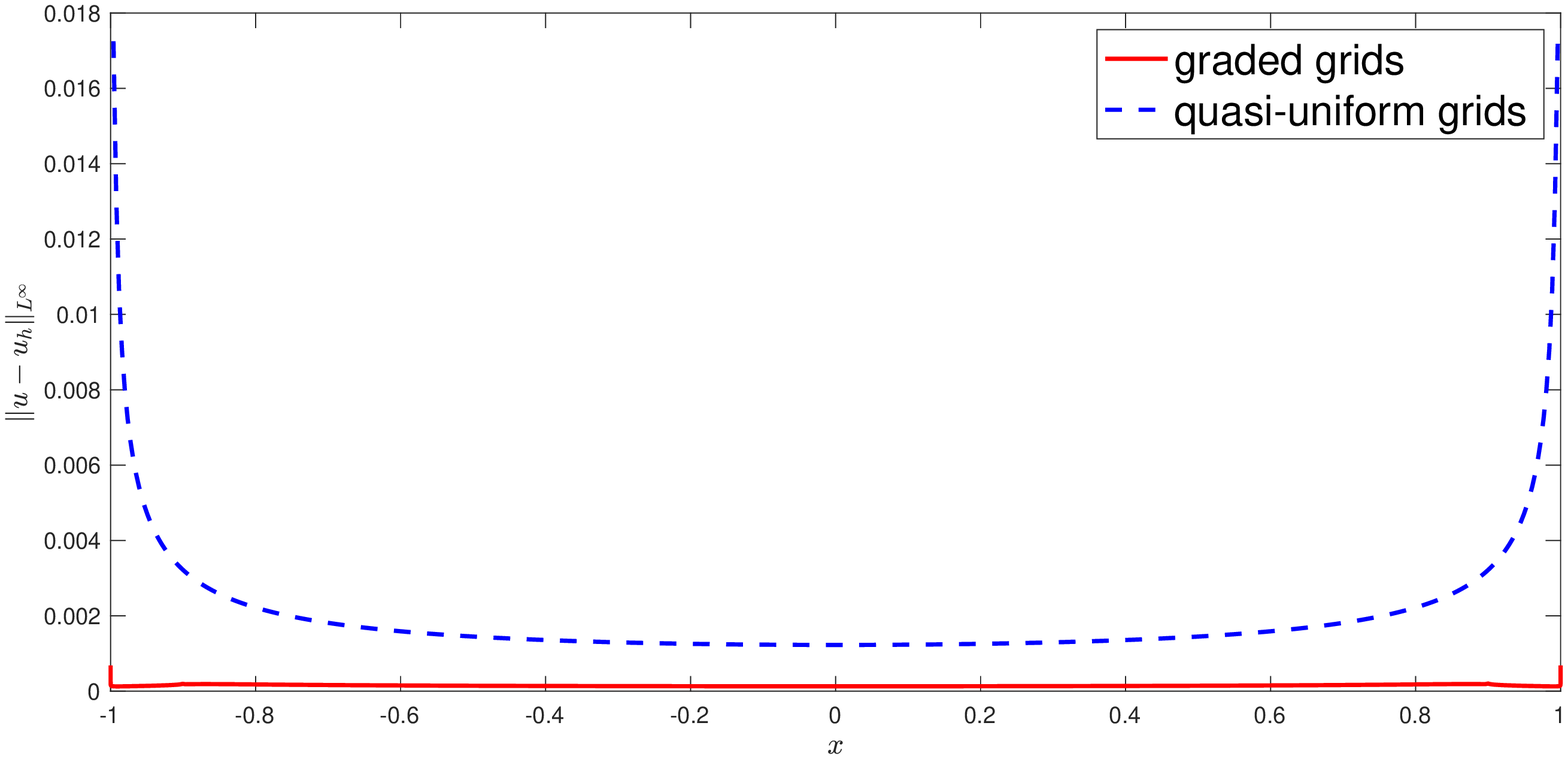}
    \caption{$\|u-u_h\|_{L^\infty}$ on quasi-uniform and graded grids.
    The maximal errors are embodied around the boundary.}
		\label{fig:err}
	\end{minipage}
\end{figure}

Next, we plot the $L^\infty$-errors for both uniform and graded grids
in Figure \ref{fig:err}. We observe that the $L^\infty$-error
increases rapidly near the boundary on quasi-uniform grids, due to the
poor H\"older continuity near the boundary. The graded grids will
alleviate this effect. Further, the error behaviors make it possible
to establish some improved interior (or local) pointwise error
estimates, which belong to our future work.

\subsection{2D test}
In the 2D test, $\Omega_i$ is taken as the square centered at $x_i$ with
side length $\sqrt{2} H_i$, see Remark \ref{rk:omega-i} (examples of
$\Omega_i$). An immediate benefit is the convenient numerical
integration on $\Omega_i^c$ on unstructured grids in \eqref{eq:Lh}.
\commentone{More precisely, the intersection of $T \cap \Omega_i^c$, if not empty, will become a polygon denoted by $P$. Therefore, the calculation of weight turns out to be the approximation of $C_{2,s}\int_P \phi(y) |y|^{-2-2s}\,\mathrm{d}y$, where $\phi$ is a linear function. Let $F(y) := \frac{C_{2,s}}{4s^2}|y|^{-2s}$ so that $\Delta F(y) = C_{2,s} |y|^{-2-2s}$. Then, after integration by parts twice, we obtain
\[
		C_{2,s} \int_{P} \phi(y) |y|^{-2-2s} \mathrm{d}y  = \int_{\partial P} \phi\,\frac{\partial F}{\partial n}\mathrm{d}s - \int_{\partial P} F\,\frac{\partial \phi}{\partial n}\mathrm{d}s.
		\]
The integral has been transformed into one-dimensional intervals, where the high-order numerical quadrature can be applied. 
}

\begin{figure}[!htbp]
\centering
	\subfloat[2D quasi-uniform grids]{
        \centering
	\includegraphics[width = 0.35\textwidth]{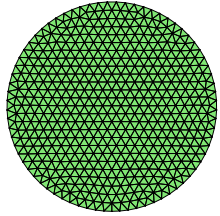}	
	\label{fig:unigrid}
	} %
        \subfloat[$\log(L^\infty\text{-error}) - \log(N^{-1})$]{
	\centering
	\includegraphics[width = 0.5\textwidth]{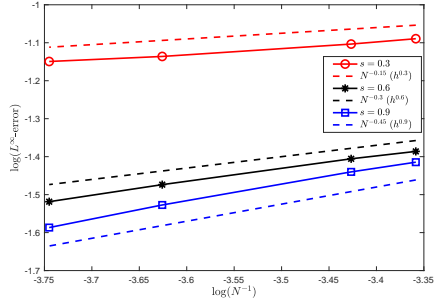}
	\label{fig:uni2d}
	}
	\caption{Convergence rates for problem (\ref{eq:fL}) using quasi-uniform grids in 2D case.} \label{fig:uni2d-result}
\end{figure}

\begin{figure}[!htbp]
\centering
	\subfloat[2D graded grids]{
        \centering
	\includegraphics[width = 0.35\textwidth]{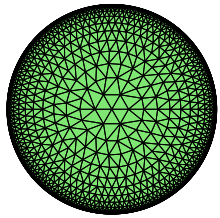}	
	\label{fig:gradgrid}
	} %
        \subfloat[$\log(L^\infty\text{-error}) - \log(N^{-1})$]{
	\centering
	\includegraphics[width = 0.5\textwidth]{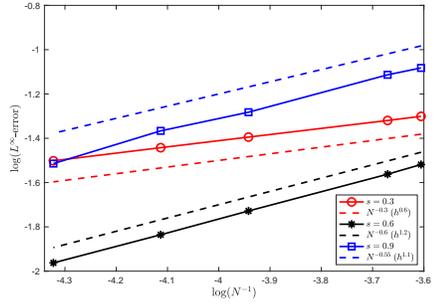}
	\label{fig:grad2d}
	}
	\caption{Convergence rates for problem (\ref{eq:fL}) using graded grids in 2D case.} \label{fig:grad2d-result}
\end{figure}

We next explore the sharpness of our estimates derived in Section
\ref{sc:error}. 
On a sequence of quasi-uniform grids (Figure \ref{fig:unigrid}), the
plots of $L^\infty$-errors for several values of $s$ are given in
Figure \ref{fig:uni2d}.  In light of (\ref{eq:graded-dof}), we have
the relationship $h \approx N^{-\frac12}$ for quasi-uniform grids
($\mu = 1$).  From \eqref{eq:err-h2} in Theorem \ref{tm:err-h}, the
theoretical  convergence rate $\mathcal{O}(h^s)$ or
$\mathcal{O}(N^{-\frac{s}{2}})$ coincides with the numerical tests. 

In the last test, we consider the computation on graded grids (Figure
\ref{fig:gradgrid}), where the errors are computed with respect to the
total number of degrees of freedom $N$. According to Theorem
\ref{tm:err-N}, the expected convergence rates are
$\mathcal{O}(N^{-\frac{2-s}{2}} )$ for $s > \frac{2}{3}$, and
$\mathcal{O}(N^{-s})$ for $s \leq \frac23$, up to a logarithmic
factor.  In Figure \ref{fig:grad2d} we display the computational rates
of convergence for $s = 0.3, 0.6, 0.9$, which are in good agreement
with theory.

\bibliographystyle{siamplain}
\bibliography{fracFD_SINUM}{} 
\end{document}